\renewcommand{\(}{\left(}
\renewcommand{\)}{\right)}
\newtheorem{theo}{Theorem}
\newtheorem{prop}{Proposition}
\newtheorem{lemma}{Lemma}
\newtheorem{cor}{Corollary\!\!}
\newtheorem{ncor}{Corollary}
\theoremstyle{definition}
\newtheorem{df}{Definition}
\newtheorem{ex}{Example}
\theoremstyle{remark}
\newtheorem{rem}{Remark\!\!}
\newtheorem{nrem}{Remark}
\newcommand{\bth}{\begin{theo}} 
\newcommand{\eth}{\end{theo}} 
\newcommand{\bl}{\begin{lemma}} 
\newcommand{\el}{\end{lemma}} 
\newcommand{\bp}{\begin{prop}} 
\newcommand{\ep}{\end{prop}} 
\newcommand{\bdf}{\begin{df}} 
\newcommand{\edf}{\end{df}} 
\newcommand{\brem}{\begin{rem}} 
\newcommand{\erem}{\end{rem}} 
\newcommand{\bnrem}{\begin{nrem}} 
\newcommand{\enrem}{\end{nrem}} 
\newcommand{\bex}{\begin{ex}} 
\newcommand{\eex}{\end{ex}} 
\newcommand{\bcor}{\begin{cor}} 
\newcommand{\ecor}{\end{cor}} 
\newcommand{\bncor}{\begin{ncor}} 
\newcommand{\encor}{\end{ncor}} 
\newcommand{\bpf}{\begin{proof}} 
\newcommand{\epf}{\end{proof}}
\newcommand{\tend}{\longrightarrow}
\definecolor{azure(colorwheel)}{rgb}{0.0, 0.5, 1.0}
\definecolor{ao(english)}{rgb}{0.0, 0.5, 0.0}
\definecolor{lapislazuli}{rgb}{0.15, 0.38, 0.61}
\definecolor{lasallegreen}{rgb}{0.03, 0.47, 0.19}
\definecolor{darklavender}{rgb}{0.45, 0.31, 0.59}
\definecolor{persianrose}{rgb}{1.0, 0.16, 0.64}
\definecolor{teal}{rgb}{0.0, 0.5, 0.5}
\definecolor{orange(colorwheel)}{rgb}{1.0, 0.5, 0.0}
\definecolor{orange(ryb)}{rgb}{0.98, 0.6, 0.01}
\begin{document}

\title[Variables in lambda-terms with bounded De Bruijn Indices and De Bruijn Levels]{Distribution
of variables in lambda-terms with restrictions on De Bruijn indices and De Bruijn levels} 
\author{Bernhard Gittenberger \and Isabella Larcher} 
\thanks{This research has been supported by the Austrian Science Fund (FWF) grant SFB F50-03.
\\
A preliminary version of this work was presented at AofA'2018. The present paper is the full
version of \cite{GiLa18}.} 
\address{Department of Discrete Mathematics and Geometry, Technische  
Universit\"at Wien, Wiedner Hauptstra\ss e 8-10/104, A-1040 Wien, Austria.}
\email{gittenberger@dmg.tuwien.ac.at} 
\address{Department of Discrete Mathematics and Geometry, Technische
Universit\"at Wien, Wiedner Hauptstra\ss e 8-10/104, A-1040 Wien, Austria.}
\email{isabella.larcher@tuwien.ac.at}




\begin{abstract} 
We investigate the number of variables in two special subclasses of lambda-terms that are restricted by a bound of the number of abstractions between a variable and its binding lambda, the so-called De-Bruijn index, or by a bound of the nesting levels of abstractions, \textit{i.e.}, the number of De Bruijn levels, respectively. These restrictions are on the one hand very natural from a practical point of view, and on the other hand they simplify the counting problem compared to that of unrestricted lambda-terms in such a way that the common methods of analytic combinatorics are applicable.

We will show that the total number of variables is asymptotically normally distributed for both subclasses of lambda-terms with mean and variance asymptotically equal to $Cn$ and $\tilde{C}n$, respectively, where the constants $C$ and $\tilde{C}$ depend on the bound that has been imposed. So far we just derived closed formulas for the constants in case of the class of lambda-terms with bounded De Bruijn index. However, for the other class of lambda-terms that we consider, namely lambda-terms with a bounded number of De Bruijn levels, we investigate the number of variables, as well as abstractions and applications, in the different De Bruijn levels and thereby exhibit a so-called ``unary profile'' that attains a very interesting shape.
\end{abstract} 
 
\maketitle

\section{Introduction} 

Lambda-calculus is a set of rules to manipulate lambda-terms and it is an important tool in
theoretical computer science. To our knowledge, the first appearance of enumeration problems in
the sense of enumerative combinatorics which are linked to lambda-calculus is found in
\cite{Ha96}, where certain models of lambda-calculus are analyzed which have representations as
formal power series. More recently, we observe rising interest in the quantitative properties of
large random lambda-terms. The first work in this direction seems to be \cite{MTZ00}. Later David
\emph{et al.} \cite{DGKRT13} investigated the proportion of normalising terms, which was also the
topic of \cite{BGZ17} in a different context. Other papers dealing with certain structural
properties of lambda-terms are for instance \cite{BoTa18,Gr16,SAKT17}.

Since studying quantitative aspects of lambda-terms using combinatorial methods relies heavily
on their enumeration, many papers are devoted to their enumeration, which itself very much depends
on the particular class of terms and the definition of the term size. The enumeration may be done
by contructing bijections to certain classes of maps, see e.g. \cite{MR3101704,Ze16,ZeGi15} or
the use of the methodology from analytic combinatorics \cite{MR2483235}, see e.g. 
\cite{BGLZ17,MR2815481,bodini2015number,MR3158269,BGG18,GrLe15,MR3018087}. 

Another approach to gain structural insight is by random generation. Solving the enumeration
problems is the basis for an efficient algorithm for this purpose, namely Boltzmann sampling
\cite{DFLS04,FFP07}. The method is extendible to a multivariate setting allowing for a fine tuning
according to specified structural properties of the sampled objects, as was demonstrated in
\cite{BBD18,BoPo10}. The generation of lambda-terms was treated in 
\cite{BGT17,MR3101704,GrLe15,Pa11,Ta17,Wa05}.

\smallskip
In \cite{bodini2015number} the authors discovered a very interesting phenomenon concerning the generating function of lambda-terms with a bounded number of De Bruijn levels, namely that the asymptotic behaviour of the coefficients of the generating function changes with the imposed bound. More precisely, the type of the dominant singularity changes from $\frac{1}{2}$ to $\frac{1}{4}$ whenever the bound belongs to a special doubly-exponentially growing sequence. This alteration of the type of the dominant singularity has a direct impact on the polynomial factor of the coefficient's asymptotics, namely it shifts from $n^{-3/2}$ to $n^{-5/2}$ for $n$ tending to infinity. This paper studies the structure of random large lambda-terms belonging to this class and thereby delivers an explanation of the above mentioned phenomenon, since it arises from the location of the variables within the lambda-term. 

\bigskip
The lambda calculus was invented by Church and Kleene in the 1930ies as a tool for the investigation of decision problems. Today it still plays an important role in computability theory and for automatic proof systems. Furthermore, it represents the basis for some programming languages, such as LISP. For a thorough introduction to lambda calculus we refer to \cite{MR3235567}. This paper does not require any preliminary knowledge of lambda calculus in order to follow the proofs. Instead we will study the basic objects of lambda calculus, namely lambda-terms, by considering them as combinatorial objects, or more precisely as a special class of directed acyclic graphs (DAGs).

\begin{df}[{lambda-terms, \cite[Definition 3]{MR3063045}}]
\label{def:lambda-terms}
Let $\mathcal{V}$ be a countable set of variables.
The set $\Lambda$ of lambda-terms is defined by the following grammar:
\begin{enumerate}
	\item every variable in $\mathcal{V}$ is a lambda-term,
	\item if $T$ and $S$ are lambda-terms then $TS$ is a lambda-term, (application)
	\item if $T$ is a lambda-term and $x$ is a variable then $\lambda x.T$ is a lambda-term. (abstraction)
\end{enumerate}
\end{df}

The name application arises, since lambda-terms of the form $TS$ can be regarded as functions $T(S)$, where the function $T$ is applied to $S$, which in turn can be a function itself. An abstraction can be considered as a quantifier that binds the respective variable in the sub-lambda-term within its scope.
Both application and repeated abstraction are not commutative, \textit{i.e.}, in general the lambda-terms $TS$ and $ST$, as well as $\lambda x. \lambda y. M$ and $\lambda y. \lambda x. M$, are different (with the exceptions of $T=S$ and none of the variables $x$ or $y$ occurring in $M$, respectively). Each $\lambda$ binds exactly one variable (which may occur several times in the terms), and since we will just focus on a special subclass of closed lambda-terms, each variable is bound. 

We will consider lambda-terms modulo $\alpha$-equivalence, which means that we identify two lambda-terms if they only differ by the names of their bound variables. For example $\lambda x. ( \lambda y. (xy)) \equiv \lambda y. ( \lambda z. (yz))$.
1972 De Bruijn (\cite{de1972lambda}) introduced a representation for lambda-terms that completely avoids the use of variables by substituting them by natural numbers that indicate the number of abstractions between the variable and its binding lambda (the binding lambda is counted as well), \textit{i.e.}, $\lambda x. ( \lambda y. (xy)) = \lambda(\lambda21)$.

\begin{df}[De Bruijn index, De Bruijn level]
The natural numbers that represent the variables in the De Bruijn representation of a lambda-term are called De Bruijn indices. The number of nested lambdas starting from the outermost one specifies the De Bruijn level in which a variable (or De Bruijn index, respectively) is located.
\end{df}

For example in the lambda-term $\lambda x. x( \lambda y. (xy)) = \lambda1(\lambda21)$ the first occurrence of the variable $x$ (\textit{i.e.}, the leftmost 1 in the De Bruijn representation) is in the first De Bruijn level, while the other variables are in the second De Bruijn level.

There is also a combinatorial interpretation of lambda-terms that considers them as DAGs and thereby naturally identifies two $\alpha$-equivalent terms to be equal. Combinatorially, lambda-terms can be seen as rooted unary-binary trees containing additional directed edges. Note that in general the resulting structures are not trees in the sense of graph theory, but due to their close relation to trees (see Definition \ref{def:lambda-DAG}) some authors call them lambda-trees or enriched trees. We will call them lambda-DAGs in order to emphasise that these structures are in fact DAGs, if we consider the undirected edges of the underlying tree to be directed away from its root.

\begin{df}[{lambda-DAG, \cite[Definition 5]{MR3063045}}]
\label{def:lambda-DAG}
 
With every lambda-term $T$, the corresponding lambda-DAG $G(T)$ can be constructed in the following way:
\begin{enumerate}
	\item If $x$ is a variable then $G(x)$ is a single node labeled with $x$. Note that $x$ is unbound.
	\item $G(PQ)$ is a lambda-DAG with a binary node as root, having the two lambda-DAGs $G(P)$ (to the left) and $G(Q)$ (to the right) as subgraphs.
	\item The DAG G($\lambda x.P$) is obtained from $G(P)$ in four steps:
	\begin{enumerate}
		\item Add a unary node as new root.
		\item Connect the new root by an undirected edge with the root of G(P).
		\item Connect all leaves of $G(P)$ labelled with $x$ by directed edges with the new root, where the root is start vertex of these edges.
		\item Remove all labels $x$ from $G(P)$. Note that now $x$ is bound.
	\end{enumerate} 
\end{enumerate}
 
Obviously, applications correspond to binary nodes and abstractions correspond to unary nodes of the underlying Motzkin-tree that is obtained by removing all directed edges. Of course, in the lambda-DAG some of the vertices that were former unary nodes might have gained out-going edges, so they are no unary nodes in the lambda-DAG anymore. However, when we speak of unary nodes in the following, we mean the unary nodes of the underlying unary-binary tree that forms the skeleton of the lambda-DAG.
\end{df}

\begin{figure}[h]
  \centering
\scalebox{0.8}{	 \begin{minipage}[t]{.45\linewidth}
		\centering
	\begin{tikzpicture}
	\fill (0,0) circle (0.1);
	\fill (2,0) circle (0.1);
	\fill (1,1) circle (0.1);
	\fill (1,2) circle (0.1);
	\fill (2,3) circle (0.1);
	\fill (3,2) circle (0.1);
	\fill (2,4) circle (0.1);
	\draw[
    decoration={markings,mark=at position 1 with {\arrow[scale=2]{>}}},
    postaction={decorate},
    shorten >=0.4pt
    ]
    (2,4) to[bend right=20] (0,0.08);
    \draw[
    decoration={markings,mark=at position 1 with {\arrow[scale=2]{>}}},
    postaction={decorate},
    shorten >=0.4pt
    ]
    (2,4) to[bend left=20] (3,2.08);
    \draw[
    decoration={markings,mark=at position 1 with {\arrow[scale=2]{>}}},
    postaction={decorate},
    shorten >=0.4pt
    ]
    (1,2) to[bend left=20] (2.01,0.08);
	\draw (0,0) -- (1,1);
	\draw (2,0) -- (1,1);
	\draw (1,1) -- (1,2);
	\draw (1,2) -- (2,3);
	\draw (2,3) -- (3,2);
	\draw (2,3) -- (2,4);
	\end{tikzpicture}
	\vspace{2mm}
	\subcaption{$\lambda x. (( \lambda y. (xy))x)$ \\ $= \lambda((\lambda21)1)$}
	\end{minipage}%
	 \hfill%
  \begin{minipage}[t]{.5\linewidth}
	\centering
	\begin{tikzpicture} 
	\fill (0,1) circle (0.1);
	\fill (1,2) circle (0.1);
	\fill (2,1) circle (0.1);
	\fill (2,0) circle (0.1);
	\fill (3,0) circle (0.1);
	\fill (5,0) circle (0.1);
	\fill (4,1) circle (0.1);
	\fill (4,2) circle (0.1);
	\fill (1,3) circle (0.1);
	\fill (4,3) circle (0.1);
	\fill (2.5,4) circle (0.1);
	\draw[
    decoration={markings,mark=at position 1 with {\arrow[scale=2]{>}}},
    postaction={decorate},
    shorten >=0.4pt
    ]
    (1,3) to[bend right=20] (0,1.08);
    \draw[
    decoration={markings,mark=at position 1 with {\arrow[scale=2]{>}}},
    postaction={decorate},
    shorten >=0.4pt
    ]
    (2,1) to[bend left=25] (2.07,0.08);
    \draw[
    decoration={markings,mark=at position 1 with {\arrow[scale=2]{>}}},
    postaction={decorate},
    shorten >=0.4pt
    ]
    (4,2) to[bend left=20] (5.01,0.08);
    \draw[
    decoration={markings,mark=at position 1 with {\arrow[scale=2]{>}}},
    postaction={decorate},
    shorten >=0.4pt
    ]
    (4,3) to[bend right=20] (3,0.08);
	\draw (0,1) -- (1,2);
	\draw (1,2) -- (1,3);
	\draw (1,3) -- (2.5,4);
	\draw (2,0) -- (2,1);
	\draw (2,1) -- (1,2);
	\draw (2.5,4) -- (4,3);
	\draw (4,3) -- (4,2);
	\draw (4,2) -- (4,1);
	\draw (4,1) -- (3,0);
	\draw (4,1) -- (5,0);
	\end{tikzpicture}
	\vspace{2mm}
	\subcaption{$(\lambda x. (x (\lambda y.y))(\lambda x.(\lambda y. xy))$ \\ $= (\lambda(1(\lambda1)))(\lambda(\lambda21))$}
	\end{minipage}	}
\caption{The lambda-DAGs corresponding to the respective terms written below.}
\label{fig:lambdaDAGs}
\end{figure}
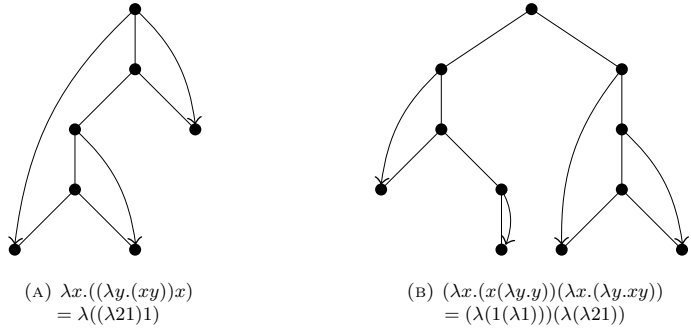

\begin{figure}[h]
\label{fig:unarylengthheight}
  \centering
\scalebox{0.8}{	 \begin{minipage}[t]{.45\linewidth}
		\centering
	\begin{tikzpicture}[scale=0.6, >=latex]
	\fill (3,8) circle (0.1);
	\fill (3,7) circle (0.1);
	\fill (0,4) circle (0.1);
	\fill (1,5) circle (0.1);
	\fill (1,6) circle (0.1);
	\fill (2,4) circle (0.1);
	\fill (5,6) circle (0.1);
	\fill (5,5) circle (0.1);
	\fill (4,4) circle (0.1);
	\fill (6,4) circle (0.1);
	\fill (3,3) circle (0.1);
	\fill (5,3) circle (0.1);
	\fill (5,2) circle (0.1);
	\fill (4,1) circle (0.1);
	\fill (6,1) circle (0.1);
	
	\draw (0,4) -- (1,5);
	\draw (1,5) -- (2,4);
	\draw (1,5) -- (1,6);
	\draw (1,6) -- (3,7);
	\draw (3,7) -- (3,8);
	\draw (5,6) -- (3,7);
	\draw (5,6) -- (5,5);
	\draw (5,5) -- (4,4);
	\draw (5,5) -- (6,4);
	\draw (4,4) -- (3,3);
	\draw (4,4) -- (5,3);
	\draw (5,3) -- (5,2);
	\draw (4,1) -- (5,2);
	\draw (6,1) -- (5,2);
	
	\draw[->] (3,8) to[bend right=40] (0.00, 4.08);
    \draw[->] (3,8) to[bend left =60] (6.04, 1.08);
    \draw[->] (1,6) to[bend left =20] (2.01, 4.08);
    \draw[->] (5,6) to[bend right=40] (3.01, 3.08);
    \draw[->] (5,6) to[bend left =20] (6.01, 4.08);
    \draw[->] (5,3) to[bend right=20] (4.01, 1.08);

	\coordinate[label=0: 2] (z) at (0,3.8);
	\coordinate[label=0: 1] (z) at (2,3.8);
	\coordinate[label=0: 1] (z) at (6,3.8);
	\coordinate[label=0: 1] (z) at (3,2.8);
	\coordinate[label=0: 1] (z) at (4,0.8);
	\coordinate[label=0: 3] (z) at (6,0.8);
	\end{tikzpicture}
	\end{minipage}%
	 \hfill%
  \begin{minipage}[t]{.55\linewidth}
	\centering
	\begin{tikzpicture}[scale=0.6, >=latex] 
	\fill (3,8) circle (0.1);
	\fill (3,7) circle (0.1);
	\fill (0,4) circle (0.1);
	\fill (1,5) circle (0.1);
	\fill (1,6) circle (0.1);
	\fill (2,4) circle (0.1);
	\fill (5,6) circle (0.1);
	\fill (5,5) circle (0.1);
	\fill (4,4) circle (0.1);
	\fill (6,4) circle (0.1);
	\fill (3,3) circle (0.1);
	\fill (5,3) circle (0.1);
	\fill (5,2) circle (0.1);
	\fill (4,1) circle (0.1);
	\fill (6,1) circle (0.1);
	
	\draw (0,4) -- (1,5);
	\draw (1,5) -- (2,4);
	\draw (1,5) -- (1,6);
	\draw (1,6) -- (3,7);
	\draw (3,7) -- (3,8);
	\draw (5,6) -- (3,7);
	\draw (5,6) -- (5,5);
	\draw (5,5) -- (4,4);
	\draw (5,5) -- (6,4);
	\draw (4,4) -- (3,3);
	\draw (4,4) -- (5,3);
	\draw (5,3) -- (5,2);
	\draw (4,1) -- (5,2);
	\draw (6,1) -- (5,2);
	
	\draw[->] (3,8) to[bend right=40] (0.00, 4.08);
    \draw[->] (3,8) to[bend left =60] (6.04, 1.08);
    \draw[->] (1,6) to[bend left =20] (2.01, 4.08);
    \draw[->] (5,6) to[bend right=40] (3.01, 3.08);
    \draw[->] (5,6) to[bend left =20] (6.01, 4.08);
    \draw[->] (5,3) to[bend right=20] (4.01, 1.08);

	\coordinate[label=0: 2] (z) at (0,3.8);
	\coordinate[label=0: 2] (z) at (2,3.8);
	\coordinate[label=0: 2] (z) at (6,3.8);
	\coordinate[label=0: 2] (z) at (3,2.8);
	\coordinate[label=0: 3] (z) at (4,0.8);
	\coordinate[label=0: 3] (z) at (6,0.8);
	\end{tikzpicture}
	\end{minipage}	}
\caption{The lambda-DAG representing the term $\lambda x.((\lambda y. xy)(\lambda z. (z(\lambda t. tx))z))$, where the leaves are labelled with (left) the corresponding De Bruijn indices, and (right) the De Bruijn level in which they are located.}
\label{fig:lambdatrees2}
\end{figure}
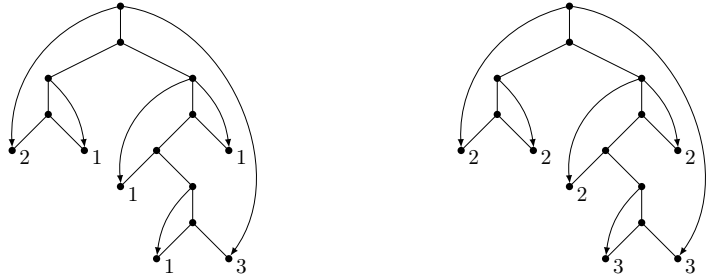

Since the skeleton of a lambda-DAG is a tree, we sometimes call the variables leaves
(\textit{i.e.}, the nodes with out-degree zero), and the path connecting the root with a leaf
(consisting of undirected edges) is called a branch. There are different approaches as to how one
can define the size of a lambda-term (\cite{DGKRT13,bodini2015number, MR3018087}), but within this paper the size will be defined as the number of nodes in the corresponding lambda-DAG.

As mentioned at the beginning, recently, rising interest in the number and structural properties
of lambda-terms can be observed, due to the direct relationship between these random structures
acting as computer programs and mathematical proofs (\cite{curry1958combinatory}).
At first sight lambda-terms appear to be very simple structures, in the sense that their construction can easily be described, but so far no one has yet accomplished to derive their asymptotic number. However, the asymptotic equivalent of the logarithm of this number can be determined up to the second-order term (see \cite{MR3158269}). The difficulty of counting unrestricted lambda-terms arises due to the fact that their number increases superexponentially with increasing size. Thus, if we translate the counting problem into generating functions, then the resulting generating function has a radius of convergence equal to zero, which makes the common methods of analytic combinatorics inapplicable.
This fast growth of the number of lambda-terms can be explained by the numerous possible bindings of leaves by lambdas,  \textit{i.e.} by unary nodes. Consequently, lately some simpler subclasses of lambda-terms, which reduce these multiple binding possibilities, have been studied, e.g. lambda-terms with prescribed number of unary nodes (\cite{bodini2015number}), or lambda-terms in which every lambda binds a prescribed (\cite{MR3158269,MR3101704,MR3063045}) or a bounded (\cite{MR3248351,MR3101704,MR3063045}) number of leaves. 
In this paper we will investigate structural properties of lambda-terms that have been introduced in \cite{MR2815481} and \cite{bodini2015number}, namely at first lambda-terms with a bounded number of abstractions between each leaf and its binding lambda, which corresponds to a bounded De Bruijn index.
The second class of lambda-terms that we will investigate within this paper is the class of lambda-terms with a bounded number of nesting levels of abstractions, \textit{i.e.}, lambda-terms with a bounded number of De Bruijn levels. 
From a practical point of view these restrictions appear to be very natural, since the number of abstractions in lambda-terms which are used for computer programming  is in general assumed to be very low compared to their size (\cite{yang2011finding}).

Particular interest lies in the number and distribution of the variables within these special subclasses of lambda-terms. We will show within this paper that the total number of leaves (\textit{i.e.}, variables) in lambda-DAGs with bounded De Bruijn indices as well as in lambda-terms with bounded number of De Bruijn levels is asymptotically normally distributed. For the latter class of lambda-terms we will also investigate the number of leaves in the different De Bruijn levels, which shows a very interesting behaviour. We will see that in the lower De Bruijn levels, \textit{i.e.} near the root of the lambda-DAG, there are very few leaves, while almost all of the leaves are located in the upper De Bruijn levels and these two domains will turn out to be asymptotically strictly separated. The same behaviour can be shown for unary and binary nodes, which allows us to set up a very interesting ``unary profile'' of this class of lambda-terms.

For lambda-terms that are locally restricted by a bound for the De Bruijn indices the number of De Bruijn levels is not bounded and will tend to infinity for increasing size. The expected number of De Bruijn levels is unknown, which implies that the correct scaling cannot be determined. Thus, we have not been able to establish results concerning the leaves (or other types of nodes) on the different De Bruijn levels for this class of lambda-terms so far. Nevertheless, further studies on this subject seem to be very interesting.

\bigskip
The plan of the paper is as follows: We will present the main results that have been derived in this paper, including all the definitions that are necessary for their understanding, in Section \ref{ch:mainresults}, while the subsequent sections are concerned with their proofs. In Section \ref{ch:length} we will show that the total number of variables in lambda-terms with bounded De Bruijn index is asymptotically normally distributed with mean and variance asymptotically $Cn$ and $\tilde{C}n$, respectively, where the constants $C$ and $\tilde{C}$ depend on the bound that has been imposed. Section \ref{ch:totalheight} shows the same result for lambda-terms where the number of De Bruijn levels is bounded. Finally, in the last section, Section \ref{ch:levelheight}, we show how the variables are distributed in lambda-terms with bounded number of De Bruijn levels. We will see that there are very few leaves on the lower De Bruijn levels, \textit{i.e.}, close to the root, while on the upper De Bruijn levels farther away from the root, there are many leaves. Furthermore, these two domains are strictly separated and we know exactly which is the first level containing a large number of leaves, since this level can be determined by the imposed bound of the number of De Bruijn levels.
This interesting behaviour also holds for the number of binary and unary nodes. By investigating all these numbers among the different De Bruijn levels we are able to set up a so-called unary profile that shows that these special lambda-terms have a very specific shape. A random closed lambda-term with a bounded number of De Bruijn levels starts with a string of unary nodes, where the length of this string depends on the imposed bound. Then it gets slowly filled with nodes until it reaches the aforementioned separating level, where it suddenly starts to contain a lot of nodes.

\section{Main results} 
\label{ch:mainresults}

In this section we will introduce the basic definitions and summarize the main results that will be presented in this paper.

First, we will investigate the total number of variables in lambda-terms with bounded De Bruijn index, \textit{i.e.}, with a bounded number of abstractions between each leaf and its binding lambda. 
Our first main result concerns the asymptotic distribution of the number of variables within this class of closed lambda-terms.

\begin{theo}
\label{theo:mainresultlength}
Let $X_n$ be the total number of variables in a random closed lambda-term of size $n$ where the De Bruijn index of each variable is at most $k$.
Then $X_n$ is asymptotically normally distributed with
\[ \mathbb{E}{X_n} \sim \frac{k}{\sqrt{k}+2k}n, \ \ \ \
\text{and} \ \ \ \ \mathbb{V}X_n \sim \frac{k^2}{2 \sqrt{k}(\sqrt{k}+2k)^2}n, \ \ \ \ \text{as} \ n \rightarrow \infty.\]
\end{theo}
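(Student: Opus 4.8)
The plan is to treat this by bivariate singularity analysis. I introduce the family of generating functions $S_m(z,u)$, where $z$ marks the size (number of nodes of the lambda-DAG) and $u$ marks the number of leaves, and where $S_m$ enumerates terms occurring underneath $m$ enclosing abstractions, subject to the De Bruijn index bound $k$. Decomposing a term into a bound leaf, an application, or an abstraction yields the recursion
\[ S_m(z,u) = \min(m,k)\,zu + z\,S_m(z,u)^2 + z\,S_{m+1}(z,u), \]
since a leaf under $m$ abstractions with index at most $k$ admits $\min(m,k)$ legal bindings, an application keeps the context unchanged, and an abstraction raises it by one. Closed terms are exactly those valid under no enclosing abstraction, so the object of interest is $S_0(z,u)$. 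For $m\ge k$ the coefficient $\min(m,k)=k$ is constant, and one checks that the relevant (combinatorial) branch satisfies the single quadratic $z S^2+(z-1)S+kzu=0$, giving
\[ S(z,u)=\frac{(1-z)-\sqrt{(1-z)^2-4kz^2u}}{2z}, \]
whose dominant singularity is the square-root branch point $\rho(u)=\bigl(1+2\sqrt{ku}\bigr)^{-1}$.

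Next I would solve the finite triangular system for $S_{k-1},\dots,S_0$ from the top down: given $S_{m+1}$, the function $S_m$ is the combinatorial root of $z S_m^2 - S_m + (\min(m,k)\,zu + z S_{m+1})=0$, i.e.
\[ S_m(z,u)=\frac{1-\sqrt{1-4z\bigl(\min(m,k)\,zu+z S_{m+1}(z,u)\bigr)}}{2z}. \]
The key point, which I expect to be the main obstacle, is to show that this back-substitution neither destroys nor advances the singularity: each $S_m$ must again have a square-root singularity located exactly at $\rho(u)$, with no spurious earlier branch point coming from the new radicand. The mechanism is that the inner discriminant stays strictly positive at $z=\rho(u)$ for $u$ near $1$. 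At $u=1$ this reduces to $4\rho(1)^2\bigl(m+S_{m+1}(\rho(1),1)\bigr)<1$; using the coefficientwise monotonicity $S_m\preceq S_{m+1}$ (more enclosing abstractions give weakly more binding options) together with $S_k(\rho(1),1)=\sqrt k$ and $m<k$, one gets $m+S_{m+1}(\rho(1),1)<k+\sqrt k$, and a direct computation shows $4\rho(1)^2(k+\sqrt k)=\tfrac{4(k+\sqrt k)}{(1+2\sqrt k)^2}<1$. By continuity this strict inequality persists in a complex neighbourhood of $(\rho(1),1)$, so an induction on $m$ from $k$ down to $0$ establishes that $S_0(z,u)=g(z,u)-h(z,u)\sqrt{1-z/\rho(u)}$ with $g,h$ analytic near $(\rho(1),1)$ and $h(\rho(1),1)\neq 0$.

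With this local description in hand, the conclusion follows from the standard machinery for algebraic singularities with a movable singularity (the quasi-powers / perturbed singularity-analysis framework of analytic combinatorics, \cite{MR2483235}). Since $\rho(u)$ is analytic and nonzero at $u=1$ and the singular exponent is the constant $1/2$, singularity analysis transfer gives $[z^n]S_0(z,u)\sim c(u)\,\rho(u)^{-n}n^{-3/2}$ uniformly for $u$ in a neighbourhood of $1$, whence the probability generating function $\mathbb{E}\,u^{X_n}=[z^n]S_0(z,u)/[z^n]S_0(z,1)$ takes the quasi-powers form $\sim \frac{c(u)}{c(1)}\bigl(\rho(1)/\rho(u)\bigr)^n$. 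Hwang's quasi-powers theorem then yields asymptotic normality with
\[ \mathbb{E}X_n\sim \mu n,\quad \mathbb{V}X_n\sim\sigma^2 n,\quad \mu=-\frac{\rho'(1)}{\rho(1)},\quad \sigma^2=-\frac{\rho''(1)}{\rho(1)}-\frac{\rho'(1)}{\rho(1)}+\Bigl(\frac{\rho'(1)}{\rho(1)}\Bigr)^2. \]
Finally, differentiating $\rho(u)=(1+2\sqrt{ku})^{-1}$ and substituting gives $\mu=\frac{\sqrt k}{1+2\sqrt k}=\frac{k}{\sqrt k+2k}$ and $\sigma^2=\frac{\sqrt k}{2(1+2\sqrt k)^2}=\frac{k^2}{2\sqrt k(\sqrt k+2k)^2}$, matching the claimed constants; the positivity $\sigma^2>0$ also certifies the nondegeneracy required by the quasi-powers theorem.
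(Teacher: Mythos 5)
Your proposal is correct and follows essentially the same route as the paper: your $S_m$ are exactly the classes $\hat{\mathcal{P}}^{(m,k)}$, the singularity analysis propagates the square-root singularity of the innermost radicand $\hat{R}_{1,k}(z,u)=(1-z)^2-4kuz^2$ outward through the nested radicals, and the conclusion is obtained from Hwang's quasi-powers theorem with $B(u)=\rho(1)/\rho(u)$, yielding the same constants. The only (minor) difference is that you justify the dominance of the innermost branch point directly via the monotonicity $S_m\preceq S_{m+1}$ and $S_k(\rho(1),1)=\sqrt{k}$, where the paper instead invokes the corresponding result of \cite{bodini2015number} at $u=1$ and extends it by continuity.
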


\begin{rem}
Note that $\mathbb{E}{X_n} \tend \frac{n}{2}$ and $\mathbb{V}X_n \tend 0$ for $k \rightarrow \infty$. Since these values are known for the number of leaves in binary trees, this gives a hint that almost all leaves of a large random unrestricted lambda-term are located within an almost purely binary structure. 
\end{rem}

Next we turn to lambda-terms with a bounded number of De Bruijn levels, \textit{i.e.} with a bounded number of unary nodes (or abstractions, respectively) in the separate branches of the corresponding lambda-DAG.

\begin{theo}
\label{theo:totalnumberleavesheight}
Let $\rho_k(u)$ be the root of smallest modulus of the function $z \mapsto R_{j+1,k}(z,u)$, where
\[R_{j+1,k}(z,u)= 1-4(k-j)z^2u-2z+2z\sqrt{1-4(k-j+1)z^2u-2z+ \sqrt{... +2z\sqrt{1-4kz^2u}}},\] and let us define $B(u)=\frac{\rho_k(1)}{\rho_k(u)}$. 

If $B''(1)+B'(1)-B'(1)^2 \neq 0$, then the total number of leaves in closed lambda-DAGs with at most $k$ De Bruijn levels is asymptotically normally distributed with asymptotic mean $\mu n$ and asymptotic variance $\sigma^2 n$, where
$\mu = B'(1)$ and $\sigma^2=B''(1)+B'(1)-B'(1)^2$.
\end{theo}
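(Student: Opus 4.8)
The plan is to realise the number of variables as a distribution governed by a bivariate generating function and to conclude via the Quasi-Powers framework \cite{MR2483235}. Let $L(z,u)=\sum_{n,m}\ell_{n,m}z^nu^m$, where $\ell_{n,m}$ is the number of closed lambda-DAGs of size $n$ with $m$ leaves and at most $k$ De~Bruijn levels. The first step is to set up $L$ by a level-by-level decomposition: writing $L_i(z,u)$ for the generating function of the sub-DAGs sitting below $i$ nested abstractions, a leaf at De~Bruijn level $i$ may be bound by any one of the $i$ abstractions above it, which yields the system
\[ L_i = zL_i^2 + zL_{i+1} + i\,zu\quad(1\le i\le k),\qquad L_{k+1}\equiv 0,\qquad L_0 = zL_0^2+zL_1, \]
the three terms of $L_i$ encoding applications, abstractions and bound variables respectively, and $L_0$ being the generating function of closed terms. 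Solving each quadratic, $L_i=\bigl(1-\sqrt{1-4z^2L_{i+1}-4iz^2u}\bigr)/(2z)$, and unfolding the recursion produces precisely the nested radical $R_{j+1,k}(z,u)$ of the statement; the dominant singularity of $L(z,u)$ is therefore the smallest positive root $z=\rho_k(u)$ of $R_{j+1,k}(\cdot,u)$.

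Next I would analyse $\rho_k(u)$ for $u$ in a complex neighbourhood of $1$. Three facts are needed. First, that $\rho_k(u)$ is a simple root of the governing radicand, so that by the analytic implicit function theorem $u\mapsto\rho_k(u)$ is analytic near $u=1$ and, in particular, $B(u)=\rho_k(1)/\rho_k(u)$ is analytic with $B(1)=1$. Second, that $\rho_k(u)$ is the unique singularity of $L(\cdot,u)$ on its circle of convergence, which follows from the positivity and aperiodicity of the coefficients $\ell_{n,m}$. Third, that near $z=\rho_k(u)$ the function $L(z,u)$ admits an algebraic singular expansion
\[ L(z,u) = g(u) - h(u)\,\Bigl(1-\tfrac{z}{\rho_k(u)}\Bigr)^{\alpha} + \cdots,\qquad h(1)\neq0, \]
holding uniformly for $u$ near $1$, where the exponent $\alpha$ (and hence the polynomial factor $n^{-\alpha-1}$, equal to $n^{-3/2}$ or $n^{-5/2}$ according to the value of $k$, as described in the introduction) is the same for all such $u$. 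Granting this, the transfer theorem of singularity analysis \cite{MR2483235} gives, uniformly in $u$,
\[ [z^n]L(z,u) = C(u)\,\rho_k(u)^{-n}\,n^{-\alpha-1}\bigl(1+o(1)\bigr), \]
with $C$ analytic and $C(1)\neq0$.

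Dividing by the same quantity at $u=1$, the polynomial factor $n^{-\alpha-1}$ cancels and the probability generating function of $X_n$ takes the quasi-powers form
\[ \mathbb{E}\,u^{X_n} = \frac{[z^n]L(z,u)}{[z^n]L(z,1)} = A(u)\,B(u)^n\bigl(1+o(1)\bigr),\qquad A(u)=\frac{C(u)}{C(1)},\quad B(u)=\frac{\rho_k(1)}{\rho_k(u)}, \]
uniformly for $u$ in a neighbourhood of $1$, with $A,B$ analytic and $A(1)=B(1)=1$. Hwang's Quasi-Powers Theorem \cite{MR2483235} then applies provided the variance does not degenerate. A short computation with $b(u)=\log B(u)$ gives the mean $b'(1)=B'(1)$ and the variance $b''(1)+b'(1)=B''(1)+B'(1)-B'(1)^2$; under the hypothesis $\sigma^2:=B''(1)+B'(1)-B'(1)^2\neq0$ the theorem yields that $X_n$ is asymptotically Gaussian with $\mathbb{E}X_n\sim B'(1)n=\mu n$ and $\mathbb{V}X_n\sim\sigma^2 n$, as claimed.

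The principal obstacle is the third point above: propagating a uniform singular expansion through the tower of nested radicals and, crucially, verifying that the singular exponent $\alpha$ stays constant as $u$ moves near $1$. As recalled in the introduction (see \cite{bodini2015number}), the exponent governing $L(z,1)$ jumps when $k$ crosses a doubly-exponential sequence of thresholds, so one must check that for the given $k$ the associated degeneracy -- a coincidence of the outer singularity with that of an inner radical, which would turn a square-root branch point into one of higher order -- does not occur on a whole neighbourhood of $u=1$. Establishing that $\rho_k(u)$ is a simple zero of the outermost radicand while every inner radicand stays bounded away from $0$ at $z=\rho_k(u)$ is exactly what secures both the analyticity of $\rho_k(u)$ and the $u$-independence of the polynomial factor, and it is the step that makes the cancellation in the quasi-powers ratio legitimate.
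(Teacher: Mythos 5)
Your argument is essentially the paper's proof for the generic case $N_j<k<N_{j+1}$: there the dominant singularity is a simple square-root branch point coming from the $(j{+}1)$-th radicand for all $u$ near $1$, the singular expansion is uniform with fixed exponent $\tfrac12$, and the Quasi-Power Theorem applies exactly as you describe; your extraction of $\mu$ and $\sigma^2$ from $B(u)=\rho_k(1)/\rho_k(u)$ is also correct.

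The gap is the case $k=N_j$, which the theorem also covers and for which the ``principal obstacle'' you flag is not a technical verification but genuinely fails. When $k=N_j$ the radicands $R_{j,k}(z,1)$ and $R_{j+1,k}(z,1)$ vanish simultaneously at $z=\rho_k(1)$, so the singularity of the generating function at $u=1$ is of type $\tfrac14$, whereas for $u\neq1$ near $1$ the coincidence is broken, the dominant singularity comes from the $j$-th radicand alone, and the type reverts to $\tfrac12$. Hence there is no neighbourhood of $u=1$ on which the singular exponent is constant, no uniform algebraic expansion of the form you posit exists, the polynomial factors in numerator and denominator of $\mathbb{E}\,u^{X_n}$ do not cancel, and the Quasi-Power Theorem is not applicable. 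The paper treats this case by a separate, more delicate argument: it sets $u=1+s/\sqrt{n}$, expands the radicands in a shrinking window $z=\rho_k(u)(1+t/n)$, evaluates the Cauchy integral over a Hankel-type contour to get $[z^n]H_k(z,1+s/\sqrt{n})=C_k(s)\,\rho_k^{-n}\,n^{-5/4}(1+o(1))$, and then proves asymptotic normality by direct convergence of the characteristic function of the standardized variable. As written, your proposal proves the theorem only for $k\in(N_j,N_{j+1})$ and needs an argument of this kind for $k=N_j$. (A minor side point: a type-$\tfrac14$ singularity yields the polynomial factor $n^{-5/4}$, not the $n^{-5/2}$ you quote from the introduction; that figure in the introduction is inconsistent with the asymptotics actually used in the proof.)
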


\begin{rem}
The requirement $B''(1)+B'(1)-B'(1)^2 \neq 0$ obviously results from the fact that otherwise the variance would be $o(n)$. However, this inequality seems to be very difficult to verify, since $B(u)=\frac{\rho_k(1)}{\rho_k(u)}$ and we do not know anything about the function $\rho_k(u)$, except for some crude bounds and its analyticity. But numerical data supports the conjecture that $B''(1)+B'(1)-B'(1)^2 \neq 0$ always holds (\textit{cf}. Table \ref{tab:initialvalues}).
\end{rem}

Lambda-terms with bounded number of De Bruijn levels have been studied in \cite{bodini2015number}, where a very unusual behaviour has been discovered. The asymptotic behaviour of the number of lambda-terms belonging to this subclass differs depending on whether the imposed bound is an element of a certain sequence $(N_i)_{i \geq 0}$, which will be given in Definition \ref{def:sequences}, or not. Though the behaviour of the counting sequence differs for these two cases, the result in Theorem \ref{theo:totalnumberleavesheight} concerning lambda-terms with bounded number of De Bruijn levels is the same after all. However, the method of proof is different in the two cases.
For our subsequent results the distinction of cases will have an impact on the asymptotic behaviour of the counting sequence of the investigated structures. Thus,
we will have to distinguish between these two cases.

\begin{df}[{auxiliary sequences $(u_i)_{i \geq 0}$ and $(N_i)_{i \geq 0}$, \cite[Def.6]{bodini2015number}}]
\label{def:sequences}
Let $(u_i)_{i \geq 0}$ be the integer sequence defined by
\[ u_0=0, \ \ \ u_{i+1}=u_i^2+i+1 \ \ \ \text{for} \ i \geq 0, \]
and $(N_i)_{i \geq 0}$ by
\[ N_i = u_i^2-u_i+i, \ \ \ \text{for all} \ i \geq 0. \]
\end{df}

In the last section we investigate the distribution of the different types of nodes in lambda-DAGs with bounded number of De Bruijn levels among the separate levels throughout the DAG.

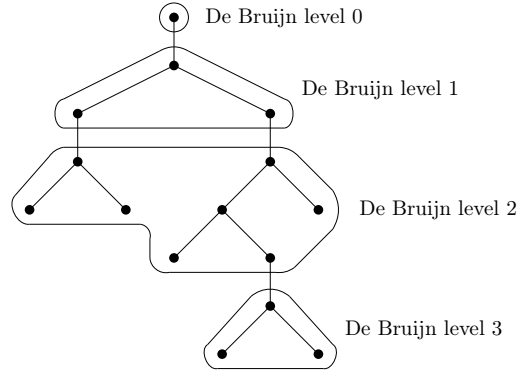
\begin{figure}[h!]
  \centering
\scalebox{0.8}{	   \begin{tikzpicture}[scale=0.8]
	\fill (3,8) circle (0.1);
	\fill (3,7) circle (0.1);
	\fill (0,4) circle (0.1);
	\fill (1,5) circle (0.1);
	\fill (1,6) circle (0.1);
	\fill (2,4) circle (0.1);
	\fill (5,6) circle (0.1);
	\fill (5,5) circle (0.1);
	\fill (4,4) circle (0.1);
	\fill (6,4) circle (0.1);
	\fill (3,3) circle (0.1);
	\fill (5,3) circle (0.1);
	\fill (5,2) circle (0.1);
	\fill (4,1) circle (0.1);
	\fill (6,1) circle (0.1);
	
	\draw (0,4) -- (1,5);
	\draw (1,5) -- (2,4);
	\draw (1,5) -- (1,6);
	\draw (1,6) -- (3,7);
	\draw (3,7) -- (3,8);
	\draw (5,6) -- (3,7);
	\draw (5,6) -- (5,5);
	\draw (5,5) -- (4,4);
	\draw (5,5) -- (6,4);
	\draw (4,4) -- (3,3);
	\draw (4,4) -- (5,3);
	\draw (5,3) -- (5,2);
	\draw (4,1) -- (5,2);
	\draw (6,1) -- (5,2);
	\coordinate[label=0: De Bruijn level 0] (z) at (3.5,8);
	\draw (3,8) circle (0.3);
	\coordinate[label=0: De Bruijn level 1] (z) at (5.5,6.5);
	\draw (0.7,5.7) -- (5.3,5.7);
	\draw (3.3,7.3) -- (5.3,6.3);
	\draw (0.7,6.3) -- (2.7,7.3);
	\draw (0.7,5.7) to[bend left=80] (0.7,6.3);
	\draw (5.3,5.7) to[bend right=80] (5.3,6.3);
	\draw (3.3,7.3) to[bend right=30] (2.7,7.3);
	\coordinate[label=0: De Bruijn level 2] (z) at (6.7,4);
	\draw (0.7,5.3) -- (5.2,5.3);
	\draw (0.4,5.1) -- (-0.3,4.3);
	\draw (0,3.7) -- (2.3,3.7);
	\draw (2.5,3.4) -- (2.5,3);
	\draw (2.9,2.7) -- (5.1,2.7);
	\draw (5.5,2.8) -- (6.3,3.6);
	\draw (6.3,4.4) -- (5.6,5.1);
	\draw (0.7,5.3) to[bend right=20] (0.4,5.1);
	\draw (-0.3,4.3) to[bend right=70] (0,3.7);
	\draw (2.3,3.7) to[bend left=50] (2.5,3.4);
	\draw (2.9,2.7) to[bend left=50] (2.5,3);
	\draw (5.1,2.7) to[bend right=20] (5.5,2.8);
	\draw (6.3,3.6) to[bend right=30] (6.3,4.4);
	\draw (5.6,5.1) to[bend right=20] (5.2,5.3);
	\coordinate[label=0: De Bruijn level 3] (z) at (6.4,1.5);
	\draw (3.9,0.7) -- (6.1,0.7);
	\draw (3.7,1.2) -- (4.6,2.2);
	\draw (5.4,2.2) -- (6.3,1.2);
	\draw (3.9,0.7) to[bend left=80] (3.7,1.2);
	\draw (6.1,0.7) to[bend right=80] (6.3,1.2);
	\draw (5.4,2.2) to[bend right=40] (4.6,2.2);
	\end{tikzpicture} }
	
\caption{Underlying Motzkin tree of e.g. the lambda term $\lambda x. ((\lambda y.yx)(\lambda z.(z(\lambda t. tx))z))$, where the different De Bruijn levels are encircled.}
\label{fig:unarylevels}
\end{figure}

\begin{rem}
Note that the De Bruijn level in which a node is located just counts the number of unary nodes in the branch connecting the root and the respective node.
\end{rem}

The following theorem includes the results that we will present in Section \ref{sec:leaveslevels}, where we show that the number of leaves near the root of the lambda-DAG, \textit{i.e.}, in the lower De Bruijn levels, is very low, while there are many leaves in the upper levels. Furthermore these two domains are strictly separated and the ``separating level'', \textit{i.e.}, the first level with many leaves, depends on the bound of the number of De Bruijn levels. We will show a very interesting behaviour, namely that with growing bound the number of leaves within the De Bruijn level that is directly below the critical separating level increases, until the bound reaches a certain number, which makes this adjacent leaf-filled level become the new separating level. Thus, we can observe a ``double jump'' in the asymptotic behaviour of the number of leaves within the separate levels (\textit{cf}. Figure \ref{fig:levelsdoublejump}).

\begin{theo}
\label{theo:levelsmeandist}
Let $_{k-l}\tilde{H}_k(z,u)$ denote the bivariate generating function of the class of closed lambda-terms with at most $k$ De Bruijn levels, where $z$ marks the size and $u$ marks the number of leaves in the $(k-l)$-th De Bruijn level. Additionally, we denote its dominant singularity by $\tilde{\rho}_k(u)$, and $\tilde{B}(u)=\frac{\tilde{\rho}_k(u)}{\tilde{\rho}_k(1)}$. Then the following assertions hold:
\noindent
\begin{itemize}[leftmargin=*]
\item If $k \in (N_j,N_{j+1})$, then the average number of leaves in the first $k-j$ De Bruijn levels is $O(1)$, as the size $n \rightarrow \infty$, while it is $\Theta(n)$ for the last $j+1$ levels.

In particular, if $\tilde{B}''(1)+\tilde{B}'(1)-\tilde{B}'(1)^2 \neq 0$, the number of leaves in each of the last $j+1$
De Bruijn levels is asymptotically normally distributed with mean and variance proportional to the size $n$ of the lambda-term.

\item If $k = N_j$, then the average number of leaves in the first $k-j$ De Bruijn levels is $O(1)$, as $n \rightarrow \infty$, while the average number of leaves in the $(k-j)$-th level is $\Theta(\sqrt{n})$. The last $j$ De Bruijn levels have asymptotically $\Theta(n)$ leaves.

In particular, if $\tilde{B}''(1)+\tilde{B}'(1)-\tilde{B}'(1)^2 \neq 0$, the number of leaves in each of the last $j$ De Bruijn levels is asymptotically normally distributed with mean and variance proportional to the size $n$ of the lambda-term..
\end{itemize}
\end{theo}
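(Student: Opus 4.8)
The plan is to encode the class by a finite system of functional equations for its level generating functions, to identify $_{k-l}\tilde{H}_k(z,u)$ as the bottom component of that system, and then to deduce every assertion from the way the dominant singularity $\tilde{\rho}_k(u)$ reacts to the marking as $u$ varies near $1$. Concretely, for $0 \le m \le k$ I would introduce the generating function $L_m(z,u)$ of lambda-DAGs rooted at De Bruijn level $m$ with at most $k$ levels in total, where $u$ marks only the leaves sitting in level $k-l$. Decomposing a term at level $m$ into a leaf (bound by one of the $m$ lambdas above it), an application, or an abstraction (which raises the level to $m+1$) gives
\begin{equation*}
L_m(z,u) = m z\, v_m + z\, L_m(z,u)^2 + z\, L_{m+1}(z,u), \qquad 0 \le m < k,
\end{equation*}
with $L_k(z,u) = k z\, v_k + z L_k(z,u)^2$, where $v_m = u$ if $m = k-l$ and $v_m = 1$ otherwise. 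Since a closed term forces the empty leaf contribution at level $0$, one has $_{k-l}\tilde{H}_k(z,u) = L_0(z,u)$. Solving each quadratic from the top, the quantities $Q_m := 1 - 2z L_m$ obey the nested recursion $Q_m^2 = 1 - 4 m z^2 v_m - 2z + 2z\, Q_{m+1}$ down to $Q_k^2 = 1 - 4k z^2 v_k$; thus $\tilde{\rho}_k(u)$ is the smallest-modulus root of the level-$(k-j)$ radical argument, the exact analogue of the function $R_{j+1,k}$ of Theorem \ref{theo:totalnumberleavesheight}, now carrying $u$ only on the level-$(k-l)$ term.

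Next I would reduce the probabilistic statements to the local behaviour of $u \mapsto \tilde{\rho}_k(u)$. If $\tilde{\rho}_k(u)$ is a movable square-root singularity depending analytically on $u$ with $\tilde{\rho}_k'(1) \neq 0$, then $[z^n]\,{}_{k-l}\tilde{H}_k(z,u)$ admits a uniform quasi-powers expansion, the mean number of marked leaves is asymptotically $-n\,\tilde{\rho}_k'(1)/\tilde{\rho}_k(1) = \Theta(n)$, and Hwang's quasi-powers theorem gives asymptotic normality; a short computation shows that the standard quasi-powers variance coefficient attached to the base $\tilde{\rho}_k(1)/\tilde{\rho}_k(u) = 1/\tilde{B}(u)$ equals $-(\tilde{B}''(1)+\tilde{B}'(1)-\tilde{B}'(1)^2)$, so it is nonzero precisely under the stated hypothesis. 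Conversely, if $\tilde{\rho}_k(u) \equiv \tilde{\rho}_k(1)$ is constant in $u$, the exponential growth rate does not feel the marking, the ratio $[z^n]\,{}_{k-l}\tilde{H}_k(z,u)/[z^n]\,{}_{k-l}\tilde{H}_k(z,1)$ tends to the ratio of singular prefactors, the number of marked leaves converges in distribution to a fixed random variable, and in particular its mean is $O(1)$.

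The combinatorial heart is to decide for which marked levels $\tilde{\rho}_k(u)$ actually moves, and this is where the sequence $(N_i)$ enters. As in \cite{bodini2015number}, for $k \in (N_j, N_{j+1})$ the dominant singularity is produced at the critical level $k-j$, i.e. at the point where the level-$(k-j)$ radical argument first vanishes. In the recursion above $L_m$ feeds on $L_{m+1}$, so a marking placed at level $k-l$ enters the equation for $L_{k-j}$ exactly when $k-l \ge k-j$, that is when $l \le j$ (a level among the last $j+1$); such a marking perturbs the critical vanishing condition to first order, moving $\tilde{\rho}_k(u)$ and producing the $\Theta(n)$, asymptotically normal regime. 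A marking at level $k-l$ with $l > j$ (among the first $k-j$ levels) does not occur in the functions that determine the critical level, so $\tilde{\rho}_k(u)$ is unchanged and we are in the $O(1)$ regime. This establishes the first bullet, and I expect the monotonicity and analyticity of the nested radicals in $u$, together with the verification that the separating level is exactly $k-j$ via $(N_i)$, to be the most delicate technical input.

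Finally, for the boundary case $k = N_j$ I would carry out a refined singularity analysis at the critical level $k-j$. Here, as discovered in \cite{bodini2015number}, two singularities coalesce and the singularity type of the unmarked generating function degenerates from the square-root ($n^{-3/2}$) to the fourth-root ($n^{-5/2}$) schema. Marking the leaves of level $k-j$ then influences $\tilde{\rho}_k(u)$ only through a square-root-type correction, so that the mean number of those leaves grows like $\Theta(\sqrt{n})$ rather than $\Theta(n)$, while the $j$ strictly deeper levels remain governed by a genuine square-root singularity and keep their $\Theta(n)$, asymptotically normal behaviour, yielding the second bullet. The main obstacle throughout is to control the bivariate nested radical uniformly in $u$ across this coalescence so that the transfer theorems apply and deliver the exact $\sqrt{n}$ order; the structural description of \cite{bodini2015number} of where and how the singularity type changes is precisely what makes this analysis tractable.
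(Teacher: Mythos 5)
Your setup (the nested-radical system, the identification of the critical radicand, and the quasi-powers argument for the levels where $\tilde{\rho}_k(u)$ moves) matches the paper's for the first bullet. The genuine gap is in the second bullet, and it comes from the dichotomy you build everything on: ``singularity moves $\Rightarrow \Theta(n)$ and normal; singularity constant $\Rightarrow O(1)$.'' In the case $k=N_j$, $l=j$ the marking variable $u$ sits in the $(j+1)$-th radicand while the dominant singularity is produced by the $j$-th radicand (this is the content of Lemma \ref{lem:domsingvonjradicand} transferred to the level-marked function), so $\tilde{\rho}_k(u)$ does \emph{not} depend on $u$ at all --- yet the mean is $\Theta(\sqrt{n})$, not $O(1)$. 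Your proposed mechanism, that the marking ``influences $\tilde{\rho}_k(u)$ only through a square-root-type correction,'' is therefore not what happens, and a square-root dependence of $\tilde{\rho}_k$ on $u$ would in any case not produce a $\sqrt{n}$ mean. The correct mechanism, and the one the paper uses, is a direct computation of
\[ \left(\tfrac{\partial}{\partial u}\, {}_{k-l}\tilde{H}_k(z,u)\right)\Big|_{u=1} = z^{k-l+1}(k-l)\prod_{i=l+1}^{k+1}\frac{1}{\sqrt{\tilde{R}_{i,k}(z,1)}}: \]
the trichotomy $O(1)/\Theta(\sqrt n)/\Theta(n)$ is read off from which singular factors $1/\sqrt{\tilde{R}_{i,k}}$ (of local exponent $-\tfrac12$, $-\tfrac14$, or $0$) appear in this product, i.e.\ from the position of the marked radicand relative to the critical one(s), not from the motion of $\tilde{\rho}_k(u)$. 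Your framework cannot produce the intermediate $\sqrt{n}$ regime without this extra input.

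A second, related gap: for $k=N_j$ and $l<j$ you propose to conclude normality because those levels ``remain governed by a genuine square-root singularity.'' But at $u=1$ the unmarked function has a singularity of type $\tfrac14$ (two radicands vanish simultaneously), while for $u\neq 1$ the type is $\tfrac12$; the singular expansion is therefore not uniform in any neighbourhood of $u=1$ and Hwang's theorem does not apply as stated. You flag this (``control the bivariate nested radical uniformly across the coalescence'') but do not resolve it. The paper resolves it by abandoning quasi-powers in this case and computing the characteristic function directly on the two-scale regime $u=1+s/\sqrt{n}$, $z=\tilde{\rho}_k(u)(1+t/n)$, with a Hankel-contour evaluation as in Section \ref{subsection:k=njtotal}; some substitute for that analysis is needed to complete your argument. (Minor additional point: in the non-moving case your $O(1)$ claim for the mean should be justified by the derivative computation above, or by establishing uniformity of the singular expansion in $u$ so that derivatives of the probability generating function converge; asserting convergence in distribution alone does not give convergence of the mean.)
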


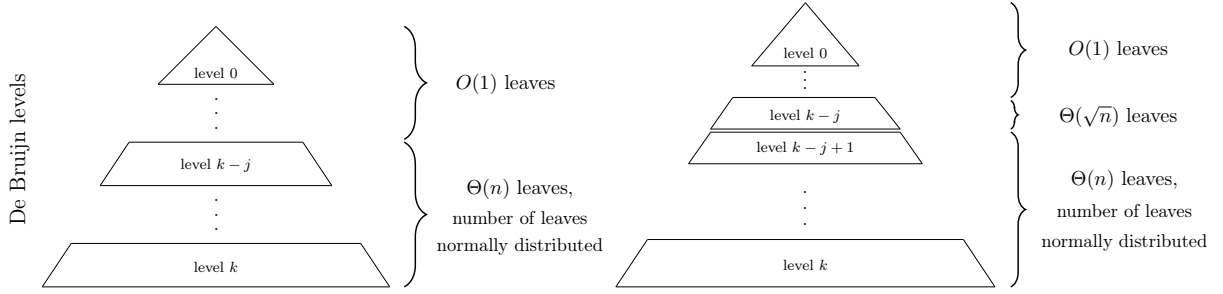
\begin{figure}[h!]
  \centering
\scalebox{0.7}{\begin{tikzpicture}[scale=0.55] 
	\draw (6,9) -- (4,7);
	\draw (6,9) -- (8,7);
	\draw (4,7) -- (8,7);
	\coordinate[label=0: .] (z) at (5.7,6.5);
	\coordinate[label=0: .] (z) at (5.7,6);
	\coordinate[label=0: .] (z) at (5.7,5.5);
	\draw (3,5) -- (9,5);
	\draw (9,5) -- (10,3.5);
	\draw (2,3.5) -- (10,3.5);
	\draw (3,5) -- (2,3.5);
	\coordinate[label=0: .] (z) at (5.7,3);
	\coordinate[label=0: .] (z) at (5.7,2.5);
	\coordinate[label=0: .] (z) at (5.7,2);
	\draw (0,0) -- (12,0);
	\draw (0,0) -- (1,1.5);
	\draw (1,1.5) -- (11,1.5);
	\draw (11,1.5) -- (12,0);
	\node at (6,7.4) {\footnotesize{level $0$}};
	\node at (6,4.2) {\footnotesize{level $k-j$}};
	\node at (6,0.7) {\footnotesize{level $k$}};
	\node at (16,7) {\large{$O(1)$ leaves}};
	\node at (16.5,3.4) {\large{$\Theta(n)$ leaves},};
	\node at (16.5,2.4) {{number of leaves}};
	\node at (16.5,1.4) {{normally distributed}};
	\node[label={[label distance=0.5cm,rotate=90]right:\Large{De Bruijn levels}}] at (-1,1) {};	\draw[thick,black,decorate,decoration={brace,amplitude=12pt}] (12.5,9) -- (12.5,5.1) node[midway, above,yshift=12pt,]{};	\draw[thick,black,decorate,decoration={brace,amplitude=12pt}] (12.5,5) -- (12.5,0) node[midway, above,yshift=12pt,]{};
	\end{tikzpicture}
\begin{tikzpicture}[scale=0.6] 
	\draw (6,9) -- (4.3,7);
	\draw (6,9) -- (7.7,7);
	\draw (4.3,7) -- (7.7,7);
	\coordinate[label=0: .] (z) at (5.7,6.8);
	\coordinate[label=0: .] (z) at (5.7,6.55);
	\coordinate[label=0: .] (z) at (5.7,6.3);
	\draw (3.7,6) -- (8.2,6);
	\draw (8.2,6) -- (9,5);
	\draw (3,5) -- (9,5);
	\draw (3.7,6) -- (3,5);
	
	\draw (2.3,3.9) -- (9.7,3.9);
	\draw (9.7,3.9) -- (9,4.9);
	\draw (3,4.9) -- (9,4.9);
	\draw (2.3,3.9) -- (3,4.9);
	\coordinate[label=0: .] (z) at (5.7,3);
	\coordinate[label=0: .] (z) at (5.7,2.5);
	\coordinate[label=0: .] (z) at (5.7,2);
	\draw (0,0) -- (12,0);
	\draw (0,0) -- (1,1.5);
	\draw (1,1.5) -- (11,1.5);
	\draw (11,1.5) -- (12,0);
	\node at (6,7.4) {\footnotesize{level $0$}};
	\node at (6,5.4) {\footnotesize{level $k-j$}};
	\node at (6,4.4) {\footnotesize{level $k-j+1$}};
	\node at (6,0.7) {\footnotesize{level $k$}};
	\node at (15.9,7.5) {\large{$O(1)$ leaves}};
	\node at (15.9,5.4) {\large{$\Theta(\sqrt{n})$ leaves}};
	\node at (16.1,3.4) {\large{$\Theta(n)$ leaves},};	
	\node at (16.1,2.4) {{number of leaves}};
	\node at (16.1,1.4) {{normally distributed}};\draw[thick,black,decorate,decoration={brace,amplitude=4pt}] (12.5,5.9) -- (12.5,5) node[midway, above,yshift=4pt,]{};
\draw[thick,black,decorate,decoration={brace,amplitude=8pt}] (12.5,9) -- (12.5,6) node[midway, above,yshift=8pt,]{};	\draw[thick,black,decorate,decoration={brace,amplitude=8pt}] (12.5,4.9) -- (12.5,0) node[midway, above,yshift=8pt,]{};
	\end{tikzpicture}}
\caption{Summary of the mean values of the number of leaves in the different De Bruijn levels in lambda-terms with at most $k$ De Bruijn levels for the case $N_j < k < N_{j+1}$ (left), and the case $k=N_j$ (right).}
\label{fig:levelsdoublejump}
\end{figure}

%

Sections \ref{sec:unarylevels} and \ref{sec:binary} are concerned with the investigation of the number of unary nodes, and binary nodes respectively, among the De Bruijn levels. Using the same techniques as in Section \ref{sec:leaveslevels} we can show that their number behaves in fact very similar to the number of leaves.

\begin{theo}
\label{theo:levelsmeanbinary}
If $k \in (N_j,N_{j+1})$, then both the average number of unary nodes and the avergae number of binary nodes in the first $k-j$ De Bruijn levels are $O(1)$, as $n \rightarrow \infty$, while they are $\Theta(n)$ in each of the last $j+1$ levels.

If $k = N_j$, then both the average number of unary nodesand the average number of binary nodes in the first $k-j$ De Bruijn levels is $O(1)$, as $n \rightarrow \infty$, while the average number of nodes of the respective type in the $(k-j)$-th De Bruijn level is $\Theta(\sqrt{n})$. The last $j$ De Bruijn levels contain each asymptotically $\Theta(n)$ unary nodes, as well as $\Theta(n)$ binary nodes.
\end{theo}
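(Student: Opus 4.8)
The plan is to rerun the generating-function analysis of Section~\ref{sec:leaveslevels} essentially verbatim, moving the marking variable $u$ from the leaf-term to the unary-node term (respectively the binary-node term) at the prescribed level. Write ${}_{k-l}\widetilde{H}^{(U)}_k(z,u)$ and ${}_{k-l}\widetilde{H}^{(B)}_k(z,u)$ for the bivariate generating functions of closed lambda-terms with at most $k$ De Bruijn levels in which $z$ marks the size and $u$ marks, respectively, the number of unary nodes and the number of binary nodes lying in the $(k-l)$-th level. These satisfy exactly the same nested system as ${}_{k-l}\widetilde{H}_k(z,u)$: in the level recursion $S^{(h)}_d=dz+z\bigl(S^{(h)}_d\bigr)^2+zS^{(h-1)}_{d+1}$ --- whose iteration produces the nested radical $R_{j+1,k}$ of Theorem~\ref{theo:totalnumberleavesheight} --- the only change is that at the distinguished level the abstraction monomial $zS^{(h-1)}_{d+1}$ (for unary nodes), respectively the application monomial $z\bigl(S^{(h)}_d\bigr)^2$ (for binary nodes), carries an extra factor $u$, in place of the leaf monomial $dz$ that was marked for Theorem~\ref{theo:levelsmeandist}. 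Hence the entire singularity-analysis machinery developed for the leaves applies without modification.

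I would then locate the dominant singularity $\widetilde{\rho}_k(u)$ of these functions near $u=1$ and read off the mean via the transfer/quasi-powers argument already used for Theorem~\ref{theo:levelsmeandist}, with $\widetilde{B}(u)=\widetilde{\rho}_k(u)/\widetilde{\rho}_k(1)$. The decisive point is that the dichotomy is the same as for the leaves and is governed purely by the position of the marked level relative to the separating level $k-j$, not by which node type is marked. If the marked level lies in the ``saturated'' bottom block (the first $k-j$ levels), the $u$-carrying factor is analytic at $\widetilde{\rho}_k(1)$ and does not move the singularity, so differentiating in $u$ does not shift $\widetilde{\rho}_k$ and the mean is $O(1)$; if it lies in the ``active'' top block (the last $j+1$ levels), $\widetilde{\rho}_k(u)$ depends genuinely on $u$, so $\widetilde{B}'(1)\neq 0$ and the mean is $\sim\widetilde{B}'(1)\,n=\Theta(n)$.

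The main obstacle is the boundary case $k=N_j$. Here the dominant singularity degenerates from square-root type to the fourth-root type responsible for the $n^{-5/2}$ coefficient asymptotics of \cite{bodini2015number}, and the distinguished $(k-j)$-th level sits exactly at the phase transition. As for the leaves, I expect the singular expansion of ${}_{k-j}\widetilde{H}^{(U)}_k$ (resp. ${}_{k-j}\widetilde{H}^{(B)}_k$) to acquire an extra half-integer power upon differentiation in $u$, so that the transfer theorem returns a mean of order $n^{1/2}$ rather than $n^{0}$ or $n^{1}$; pinning down this half-power uniformly in $u$, and checking that the marking does not destroy the Puiseux structure at the singularity, is the delicate part and must be carried out with the same care as the $k=N_j$ analysis of Section~\ref{sec:leaveslevels}.

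Finally, I would exploit a purely combinatorial identity to halve the work, noting that only the orders of the means are asserted here (no limit law). Slicing a lambda-DAG along its De Bruijn levels, the nodes of level $\ell$ form a forest of full binary trees whose roots are the children of the $U_{\ell-1}$ unary nodes of level $\ell-1$ (with $U_{-1}:=1$) and whose leaves are exactly the $L_\ell$ variables and the $U_\ell$ unary nodes of level $\ell$; counting the leaves of a full binary forest yields the deterministic relation $L_\ell+U_\ell=B_\ell+U_{\ell-1}$, hence $\mathbb{E}B_\ell=\mathbb{E}L_\ell+\mathbb{E}U_\ell-\mathbb{E}U_{\ell-1}$ by linearity. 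Once the unary statement and Theorem~\ref{theo:levelsmeandist} are available, the bottom-level bound $\mathbb{E}B_\ell=O(1)$ then follows at once, as the right-hand side is a difference of $O(1)$ terms; in the top block this identity serves as a consistency check, while the lower bound $\mathbb{E}B_\ell=\Theta(n)$ there --- where cancellation between the $\Theta(n)$ terms has to be excluded --- is precisely what still requires the binary-node singularity analysis sketched above.
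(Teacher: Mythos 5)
Your proposal takes essentially the same route as the paper: re-mark the nested-radical generating function so that the marking variable sits on the abstraction monomial (for unary nodes) or the application monomial (for binary nodes) of the distinguished level, differentiate at $1$, and repeat the singularity analysis of Section~\ref{sec:leaveslevels} with the case split governed by the position of the marked radicand relative to the critical one; this is exactly what the paper does in Sections~\ref{sec:unarylevels} and~\ref{sec:binary}, including the $\Theta(\sqrt{n})$ quarter-to-quarter exponent shift at the separating level when $k=N_j$. Two points of comparison. First, for binary nodes the system is not literally ``the same with one extra factor'': solving $B=izu+vzB^2+zP^{(i+1,k)}$ leaves a $1/(2zv)$ in front of the radical, so the marking leaks into the \emph{next outer} radicand as well; the paper accordingly modifies two radicands, $\breve{R}_{l+1,k}$ and $\breve{R}_{l+2,k}$. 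This only adds terms to $\partial_v H|_{v=1}$ and does not change any order of growth, but it has to be tracked. Second, your level-slicing identity $L_\ell+U_\ell=B_\ell+U_{\ell-1}$ (with $U_{-1}=1$) is correct and does not appear in the paper; it yields the $O(1)$ bound for binary nodes in the bottom block for free, and it also settles the separating level when $k=N_j$, since there $\mathbb{E}B_{k-j}=\mathbb{E}L_{k-j}+\mathbb{E}U_{k-j}-\mathbb{E}U_{k-j-1}=\Theta(\sqrt{n})+\Theta(\sqrt{n})-O(1)$ with both $\sqrt{n}$-terms entering with a plus sign, so no cancellation is possible. This is a genuine (if partial) simplification over the paper, which for binary nodes works out only one case explicitly and declares the rest analogous. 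As you rightly note, in the top block the identity cannot by itself exclude cancellation between $\Theta(n)$ terms, so the direct singular expansion of $\partial_v H|_{v=1}$ is still required there; relatedly, your assertion that $\tilde{B}'(1)\neq 0$ in the top block should not be left as a claim about ``genuine dependence on $u$'' --- the paper secures the $\Theta(n)$ lower bounds by exhibiting the leading singular coefficient as an explicit product of positive factors, and you should close your argument the same way.
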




\section{Total number of leaves in lambda-terms with bounded De Bruijn indices} 
\label{ch:length}

In this section we investigate the asymptotic number of all leaves in closed lambda-terms with bounded De Bruijn indices.
In order to get some quantitative results concerning this restricted class of lambda-terms we will use the well-known symbolic method (see \cite{MR2483235}) and therefore we introduce further combinatorial classes as it has been done in \cite{bodini2015number}: $\mathcal{Z}$ denotes the class of atoms, $\mathcal{A}$ the class of application nodes ({\textit{i.e.}, binary nodes), $\mathcal{U}$ the class of abstraction nodes (\textit{i.e.}, unary nodes), and $\hat{\mathcal{P}}^{(i,k)}$ the class of unary-binary trees such that every leaf $e$ can be labelled in $\min \{ h_u(e)+i,k \}$ ways. 

The classes $\hat{\mathcal{P}}^{(i,k)}$ can be specified by

\[ \hat{\mathcal{P}}^{(k,k)} = k \mathcal{Z} + ( \mathcal{A} \times \hat{\mathcal{P}}^{(k,k)} \times \hat{\mathcal{P}}^{(k,k)} ) + ( \mathcal{U} \times \hat{\mathcal{P}}^{(k,k)}), \ \ \ \ \ \ \ \ \ \ \ \ \ \ \  \ \ \ \ \ \]
and
\[ \hat{\mathcal{P}}^{(i,k)} = i \mathcal{Z} + ( \mathcal{A} \times \hat{\mathcal{P}}^{(i,k)} \times \hat{\mathcal{P}}^{(i,k)} ) + ( \mathcal{U} \times \hat{\mathcal{P}}^{(i+1,k)}) \ \ \ \ \ \ \ \text{for} \ \ i < k. \]

Translating into generating functions with $z$ marking the size and $u$ marking the number of leaves, we get
\[ \hat{P}^{(k,k)}(z,u) = kzu + z \hat{P}^{(k,k)^2}(z,u) + z\hat{P}^{(k,k)}(z,u), \]
and 
\[ \hat{P}^{(i,k)}(z,u) = izu + z \hat{P}^{(i,k)^2}(z,u) + z\hat{P}^{(i+1,k)}(z,u), \]
which yields
\[ \hat{P}^{(k,k)}(z,u) = \frac{1-z- \sqrt{(1-z)^2-4z^2ku}}{2z}, \]
and
\[ \hat{P}^{(i,k)}(z,u) = \frac{1-\sqrt{1-4z(izu+z\hat{P}^{(i+1,k)}(z,u))}}{2z} = \frac{1-\sqrt{1-4iz^2u-4z^2\hat{P}^{(i+1,k)}(z,u)}}{2z}, \]
for $i<k$.

This can be written in the form

\[ \hat{P}^{(i,k)}(z,u) = \frac{1- \mathbf{1}_{[i=k]}z-\sqrt{\hat{R}_{k-i+1,k}(z,u)}}{2z}, \]
with
\begin{align}
\label{equ:radicalhatk1}
 \hat{R}_{1,k}(z,u) = (1-z)^2-4kuz^2, 
\end{align}
\[ \hat{R}_{2,k}(z,u) = 1-4(k-1)z^2u-2z+2z^2+2z \sqrt{\hat{R}_{1,k}(z,u)}, \]
and
\begin{align}
\label{equ:radicalhatrecursion}
\hat{R}_{i,k}(z,u) = 1-4(k-i+1)z^2u-2z+2z \sqrt{\hat{R}_{i-1,k}(z,u)}, \ \ \ \ \  \text{for} \ \ 3 \leq i \leq k+1. 
\end{align}

Since the class $\hat{\mathcal{P}}^{(0,k)}$ is isomorphic to the class $\mathcal{G}_{k}$ of closed lambda-terms where all De Bruijn indices are not larger than $k$, we get for the corresponding bivariate generating function
\[ G_k(z,u)= \hat{P}^{(0,k)}(z,u)= \frac{1- \sqrt{\hat{R}_{k+1,k}(z,u)}}{2z}. \]

From \cite{bodini2015number} we know that the dominant singularity  of $G_{k}(z,1)$ comes from the innermost radicand only and consequently is of type $\frac{1}{2}$. Due to continuity arguments this implies that in a sufficiently small neighbourhood of $u=1$ the dominant singularity $\hat{\rho}_k(u)$ of $G_{k}(z,u)$ comes also only from the innermost radicand, \textit{i.e.}, $\hat{R}_{1,k}(z,u)$, and is of type $\frac{1}{2}$. By calculating the smallest positive root of $\hat{R}_{1,k}(z,u)$ we get $\hat{\rho}_k(u)= \frac{1}{1+2\sqrt{ku}}$. Now we will determine the expansions of the radicands in a neighbourhood of the dominant singularity $\hat{\rho}_k(u)$.

\begin{prop}
\label{prop:rjkaroundsing}
Let $\hat{\rho}_k(u)$ be the root of the innermost radicand $\hat{R}_{1,k}(z,u)$, \textit{\textit{i.e.}} $\hat{\rho}_k(u) = \frac{1}{1+2\sqrt{ku}}$, where $u$ is in a sufficiently small neighbourhood of 1, \textit{i.e.} $|u-1|<\delta$ for $\delta>0$ sufficiently small. Then the equations 
\begin{align}
\label{equ:r1karoundsing}
 \hat{R}_{1,k}(\hat{\rho}_k(u)-\epsilon,u) = \big (2 -2\hat{\rho}_k(u) +8ku\hat{\rho}_k(u) \big) \epsilon + \mathcal{O}(|\epsilon|^2), 
\end{align}
and
\begin{align} 
\label{equ:rjkaroundsing}
\hat{R}_{j,k}(\hat{\rho}_k(u)-\epsilon,u) = {c}_j(u) \hat{\rho}_k(u)^2 + \frac{\sqrt{8\hat{\rho}_k(u) (1-\hat{\rho}_k(u)^2)}}{\prod_{l=2}^{j}\sqrt{c_l(u)}} \cdot \sqrt{\epsilon} + \mathcal{O}(|\epsilon|),
\end{align}
for $2 \leq j \leq k+1$, with ${c}_1(u)=1$ and ${c}_j(u)=4(j-1)u-1+2\sqrt{c_{j-1}(u)}$ for $2 \leq j \leq k+1$, hold for $\epsilon \longrightarrow 0$ so that $\epsilon \in \mathbb{C} \setminus \mathbb{R}^{-}$, uniformly in $u$.
\end{prop}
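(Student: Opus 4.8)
The plan is to prove both expansions by induction on the nesting depth $j$, tracking the Puiseux expansion of $\hat{R}_{j,k}(z,u)$ as $z \to \hat\rho_k(u)$ along the ray $z = \hat\rho_k(u) - \epsilon$. Write $\rho := \hat\rho_k(u)$ for brevity. The one algebraic identity I would use repeatedly is the defining relation of the innermost root, $(1-\rho)^2 = 4ku\rho^2$, equivalently $4k\rho^2 u = (1-\rho)^2$; it is what turns every messy polynomial part into a clean multiple of $\rho^2$.

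For the base case $j=1$ I would factor $\hat{R}_{1,k}(z,u) = \big(1-(1+2\sqrt{ku})z\big)\big(1-(1-2\sqrt{ku})z\big)$, so that $\rho$ is a simple zero, and a first-order Taylor expansion immediately gives \eqref{equ:r1karoundsing} with leading coefficient $2-2\rho+8ku\rho$; using the root relation this simplifies to $2(1-\rho)/\rho$, the form I want for the induction. For the constant term of $\hat{R}_{j,k}$ with $j\ge 2$, I would substitute $z=\rho-\epsilon$ into the recursion $\hat{R}_{j,k}=1-4(k-j+1)z^2u-2z+2z\sqrt{\hat{R}_{j-1,k}}$. By the inductive hypothesis $\hat{R}_{j-1,k}(\rho-\epsilon,u)=c_{j-1}\rho^2+\mathcal{O}(\sqrt\epsilon)$ with $c_{j-1}\rho^2>0$, so $\sqrt{\hat{R}_{j-1,k}}=\sqrt{c_{j-1}}\,\rho+\mathcal{O}(\sqrt\epsilon)$; evaluating the affine part at $\epsilon=0$ leaves $1-4(k-j+1)\rho^2u-2\rho+2\sqrt{c_{j-1}}\rho^2$, and replacing $4k\rho^2u$ by $(1-\rho)^2$ collapses this to $\big(4(j-1)u-1+2\sqrt{c_{j-1}}\big)\rho^2=c_j\rho^2$. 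This simultaneously produces the constant term and the recursion defining $c_j$.

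For the coefficient of $\sqrt\epsilon$ I would write $\hat{R}_{j,k}(\rho-\epsilon,u)=c_j\rho^2+D_j\sqrt\epsilon+\mathcal{O}(\epsilon)$ and push the expansion one order further: from $\sqrt{\hat{R}_{j-1,k}}=\sqrt{c_{j-1}}\,\rho+\frac{D_{j-1}}{2\sqrt{c_{j-1}}\,\rho}\sqrt\epsilon+\mathcal{O}(\epsilon)$ and the prefactor $2z=2\rho+\mathcal{O}(\epsilon)$ I get the propagation rule $D_j=D_{j-1}/\sqrt{c_{j-1}}$ for $j\ge 3$. The recursion must be seeded separately at $j=2$, because there the inner radicand $\hat{R}_{1,k}$ has a \emph{vanishing} constant term: here $\sqrt{\hat{R}_{1,k}}=\sqrt{2(1-\rho)/\rho}\,\sqrt\epsilon+\mathcal{O}(\epsilon^{3/2})$, whence $D_2=2\rho\sqrt{2(1-\rho)/\rho}=\sqrt{8\rho(1-\rho)}$. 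Telescoping the rule then expresses $D_j$ as a fixed square root divided by a product of the $\sqrt{c_l}$, which is the closed form recorded in \eqref{equ:rjkaroundsing}; pinning down the exact index range of that product and the precise factor inside the outer root is the error-prone bookkeeping step that I would check most carefully.

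The main obstacle I anticipate is analytic rather than algebraic. One has to justify that for $u$ in a sufficiently small neighbourhood of $1$ the singularity first met by $z\mapsto\hat{R}_{j,k}(z,u)$ is the one coming from the innermost radicand alone, so that every outer radicand stays bounded away from its branch cut — this is guaranteed by $\hat{R}_{j-1,k}(\rho,u)=c_{j-1}\rho^2>0$ and the positivity and continuity of the $c_l(u)$, together with the fact imported from \cite{bodini2015number} that the dominant singularity of $G_k(z,1)$ is of square-root type and originates only from $\hat{R}_{1,k}$. The remaining burden is to verify that the stated expansions hold \emph{uniformly} in $u$ near $1$ and for $\epsilon\in\mathbb{C}\setminus\mathbb{R}^-$ (the correct branch of each square root), and that the $\mathcal{O}$-terms genuinely remain $\mathcal{O}(\epsilon)$ after each substitution; this uniformity is exactly what the later singularity-perturbation and quasi-power arguments will require.
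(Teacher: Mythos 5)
Your proposal follows essentially the same route as the paper: Taylor-expand the innermost radicand at its simple zero, then induct outward with constant term $c_j\hat\rho_k(u)^2$ and $\sqrt{\epsilon}$-coefficient obeying the propagation rule $d_{j+1}=d_j/\sqrt{c_j}$ seeded at $j=2$, with positivity of the $c_l$ at $u=1$ plus continuity securing the expansion near $u=1$. Your simplification of the seed to $\sqrt{8\hat\rho_k(u)(1-\hat\rho_k(u))}$ agrees with the paper's $d_2(u)=2\hat\rho_k(u)\sqrt{2-2\hat\rho_k(u)+8ku\hat\rho_k(u)}$, and the ``error-prone bookkeeping'' you flag (the product's index range and the factor under the outer root) is indeed exactly where the proposition's displayed formula and the proof's telescoped $d_{j+1}(u)=2\hat\rho_k(u)\sqrt{2-2\hat\rho_k(u)+8ku\hat\rho_k(u)}\big/\prod_{l=2}^{j}\sqrt{c_l(u)}$ need to be reconciled.
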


\begin{proof}
Using the Taylor expansion of $\hat{R}_{1,k}(z,u)$ around $\hat{\rho}_k(u)$ we obtain
\[ \hat{R}_{1,k}(z,u) = \hat{R}_{1,k}(\hat{\rho}_k(u),u)+(z-\hat{\rho}_k(u)) \frac{\partial}{\partial z}\hat{R}_{1,k}(\hat{\rho}_k(u),u)+ \mathcal{O}((z-\hat{\rho}_k(u))^2). \]

Per definition the first summand $\hat{R}_{1,k}(\hat{\rho}_k(u),u)$ is equal to zero. Setting $z=\hat{\rho}_k(u)-\epsilon$ and using (\ref{equ:radicalhatk1}) we obtain the first claim of Proposition \ref{prop:rjkaroundsing}.

The next step is to compute an expansion of $\hat{R}_{j,k}(z,u)$ around $\hat{\rho}_k(u)$ for $2\leq j \leq k+1$. 
  Using the recursive relation (\ref{equ:radicalhatk1}) for $\hat{R}_{2,k}(z,u)$ and the formula $\hat{\rho}_k(u)=\frac{1}{1+2\sqrt{ku}}$ yields
\[ 
\hat{R}_{2,k}(\hat{\rho}_k(u)-\epsilon,u) = (1+4u) \hat{\rho}_k(u)^2 + 2\hat{\rho}_k(u)\sqrt{2 -2\hat{\rho}_k(u) +8ku\hat{\rho}_k(u)} \sqrt{\epsilon} + \mathcal{O}(|\epsilon|). 
\]

We set $c_2(u):=1+4u$ and $d_2(u):=2\hat{\rho}_k(u)\sqrt{2 -2\hat{\rho}_k(u) +8ku\hat{\rho}_k(u)}$ and assume that for $2 \leq j \leq k+1$ the equation $\hat{R}_{j,k}(\hat{\rho}_k(u)-\epsilon,u)=c_j(u) \hat{\rho}_k^2(u)+d_j(u) \sqrt{\epsilon} + \mathcal{O}(|\epsilon|)$ holds. Now we proceed by induction. 

Observe that
\[ 
\hat{R}_{j+1,k}(\hat{\rho}_k(u)-\epsilon,u) = 1- 4(k-j)\hat{\rho}_k^2(u)u-2\hat{\rho}_k(u)+
2\hat{\rho}_k(u)\sqrt{c_j(u) \hat{\rho}_k^2+d_j(u) \sqrt{\epsilon} + \mathcal{O}(|\epsilon|)}.
\]

Expanding, using again $\hat{\rho}_k(u)=\frac{1}{1+2\sqrt{ku}}$, and simplifying yields

\[ 
\hat{R}_{j+1,k}(\hat{\rho}_k(u)-\epsilon,u) = 4ju\hat{\rho}_k(u)^2-\hat{\rho}_k(u)^2 +2\hat{\rho}_k(u)^2\sqrt{c_j(u)}+ \frac{d_j(u)}{\sqrt{c_j(u)}}\sqrt{\epsilon} + \mathcal{O}(|\epsilon|). 
\]

Setting $c_{j+1}(u):=4ju-1+2\sqrt{c_j(u)}$ and $d_{j+1}(u):=\frac{d_j(u)}{\sqrt{c_j(u)}}$ for $2 \leq j \leq k$, we obtain 
$\hat{R}_{j+1,k}(\hat{\rho}_k(u)-\epsilon,u)=c_{j+1} \hat{\rho}_k^2(u) +d_{j+1} \sqrt{\epsilon} + \mathcal{O}(|\epsilon|)$.

Expanding $d_{j+1}(u)$, using its recursive relation and $d_2(u)=2\hat{\rho}_k(u)\sqrt{2 -2\hat{\rho}_k(u) +8ku\hat{\rho}_k(u)}$, we get for $2 \leq j \leq k$
\[ d_{j+1}(u)= \frac{2\hat{\rho}_k(u)\sqrt{2 -2\hat{\rho}_k(u) +8ku\hat{\rho}_k(u)}}{\prod_{l=2}^j \sqrt{c_l(u)}}. \]

Finally, we show that the $c_l(u)$'s are greater than zero in a neighbourhood of $u=1$. By induction it can easily be seen that they are always positive for $u=1$, since
\begin{align*}
c_1(1)=1,
\end{align*}
and assuming $c_{i-1}(1)<c_{i}(1)$ we get
\begin{align*}
c_{i+1}(1)=41-1+2\sqrt{c_i(1)} > 4(i-1) + 4 - 1 + 2\sqrt{c_{i-1}(1)} = c_i(1) + 4.
\end{align*}
Using continuity arguments we can see that the functions $c_l(u)$ have to be positive in a sufficiently small neighbourhood of $u=1$ as well, which completes the proof of (\ref{equ:rjkaroundsing}).
\end{proof}

\begin{theo}
\label{thm:asympgleqzu}
Let for any fixed $k$, $G_{k}(z,u)$ denote the bivariate generating function of the class of closed lambda-terms where all De Bruijn indices are at most $k$. Then the equation 
\[ [z^n]G_{k}(z,u) = \sqrt{\frac{\sqrt{ku}+2ku}{4\pi\prod_{l=2}^{k+1}c_l(u)}} (1+2\sqrt{ku})^n n^{-\frac{3}{2}} \left( 1+ O \left( \frac{1}{\sqrt{n}} \right) \right), \ \ \ \ \text{for} \ \ n \rightarrow \infty, \]
with $c_1(u)=1$ and $c_j(u)=4(j-1)u-1+2\sqrt{c_{j-1}(u)}$, for $2 \leq j \leq k+1$, holds uniformly in $u$ for $|u-1|<\delta$, with $\delta>0$ sufficiently small.
\end{theo}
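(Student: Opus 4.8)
The plan is to transfer the local behaviour of the innermost-radicand-driven singularity, furnished by Proposition \ref{prop:rjkaroundsing}, into coefficient asymptotics via the transfer theorem of singularity analysis \cite{MR2483235}, carrying everything uniformly in $u$. First I would specialise \eqref{equ:rjkaroundsing} to $j=k+1$ and abbreviate it as $\hat{R}_{k+1,k}(\hat{\rho}_k(u)-\epsilon,u)=A(u)+B(u)\sqrt{\epsilon}+O(|\epsilon|)$, where $A(u)=c_{k+1}(u)\hat{\rho}_k(u)^2>0$ (positivity of the $c_l$ near $u=1$ being exactly the last assertion of Proposition \ref{prop:rjkaroundsing}) and $B(u)$ is the displayed coefficient of $\sqrt{\epsilon}$. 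Composing with the outer square root gives $\sqrt{\hat{R}_{k+1,k}}=\sqrt{A(u)}+\frac{B(u)}{2\sqrt{A(u)}}\sqrt{\epsilon}+O(|\epsilon|)$, and dividing by $2z$ while expanding $1/z$ around $\hat{\rho}_k(u)$ yields a singular expansion $G_k(z,u)=g(z,u)-\frac{B(u)}{4\hat{\rho}_k(u)\sqrt{A(u)}}\sqrt{\epsilon}+O(|\epsilon|)$ with $g$ analytic at $\hat{\rho}_k(u)$. This already confirms the square-root singularity asserted in the text preceding the proposition, since the terms with integer powers of $\hat{\rho}_k(u)-z$ contribute only the analytic part $g$.

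Writing $\epsilon=\hat{\rho}_k(u)-z$, so that $\sqrt{\epsilon}=\sqrt{\hat{\rho}_k(u)}\,\sqrt{1-z/\hat{\rho}_k(u)}$, I would read off the coefficient $C(u)$ of $\sqrt{1-z/\hat{\rho}_k(u)}$ and apply the transfer theorem to obtain $[z^n]G_k(z,u)\sim\frac{C(u)}{\Gamma(-1/2)}\hat{\rho}_k(u)^{-n}n^{-3/2}$, where $\Gamma(-1/2)=-2\sqrt{\pi}$ and $\hat{\rho}_k(u)^{-n}=(1+2\sqrt{ku})^n$. What remains is a routine simplification of the constant: inserting $A(u)$, $B(u)$ and $\hat{\rho}_k(u)=\frac{1}{1+2\sqrt{ku}}$, the full product $\prod_{l=2}^{k+1}c_l(u)$ under the root in the claimed constant is produced by absorbing the factor $\sqrt{c_{k+1}(u)}$ coming from $\sqrt{A(u)}$ into the denominator inherited from $B(u)$, while the algebraic identity $\sqrt{ku}+2ku=\frac{1-\hat{\rho}_k(u)}{2\hat{\rho}_k(u)^2}$ (immediate from $\hat{\rho}_k(u)=\frac{1}{1+2\sqrt{ku}}$) converts the remaining $\hat{\rho}_k(u)$-dependence into the numerator $\sqrt{ku}+2ku$, giving precisely $\sqrt{\tfrac{\sqrt{ku}+2ku}{4\pi\prod_{l=2}^{k+1}c_l(u)}}$. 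The stated relative error $O(n^{-1/2})$ is the $O$-transfer of the $O(|\epsilon|)$ remainder, which contributes at most a term of order $\hat{\rho}_k(u)^{-n}n^{-2}$.

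The part needing the most care, and the main obstacle, is not this computation but the \emph{uniformity in $u$}, since the whole purpose of the theorem is to feed it into a quasi-powers argument for Theorem \ref{theo:mainresultlength}. I would have to fix $\delta>0$ small enough that, for all $u$ with $|u-1|<\delta$ simultaneously, the singularity $\hat{\rho}_k(u)$ still stems solely from the innermost radicand $\hat{R}_{1,k}(z,u)$ and remains the unique dominant singularity on the circle $|z|=\hat{\rho}_k(u)$; the continuity remark made just before Proposition \ref{prop:rjkaroundsing} has to be upgraded to a quantitative statement exhibiting a common $\Delta$-domain. Uniqueness on the circle follows from Pringsheim's theorem together with the aperiodicity of the underlying Motzkin skeleton, so that no conjugate singularity of equal modulus appears, and the analyticity and non-degeneracy of $A(u)$, $B(u)$ and $C(u)$ on $|u-1|<\delta$ then let the transfer theorem be applied with constants uniform in $u$. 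This uniform analytic framework is exactly what yields the error term $O(n^{-1/2})$ uniformly and makes the result usable for the subsequent limit law.
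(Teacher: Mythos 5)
Your proposal is correct and follows essentially the same route as the paper: specialise the expansion of Proposition \ref{prop:rjkaroundsing} at $j=k+1$, compose with the outer square root and the $1/(2z)$ factor to obtain the $\sqrt{\epsilon}$-singular expansion of $G_k(z,u)$, apply the transfer theorem, and simplify the constant using $\hat{\rho}_k(u)=\frac{1}{1+2\sqrt{ku}}$ and the resulting identity $\sqrt{ku}+2ku=\frac{1-\hat{\rho}_k(u)}{2\hat{\rho}_k(u)^2}$. The additional care you devote to uniformity in $u$ is a welcome elaboration but does not change the argument, which the paper covers by the continuity considerations stated just before Proposition \ref{prop:rjkaroundsing}.
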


\begin{proof} 
Using $G_{k}(z,u)= \frac{1- \sqrt{\hat{R}_{k+1,k}(z,u)}}{2z}$ and (\ref{equ:rjkaroundsing}), we get for $\epsilon \in \mathbb{C} \setminus \mathbb{R}^{-}$ with $|\epsilon| \longrightarrow 0$
\[ G_{k}(\hat{\rho}_k(u)-\epsilon,u)= \frac{1-\sqrt{c_{k+1}(u)}\hat{\rho}_k(u)}{2\hat{\rho}_k(u)} - \frac{d_{k+1}(u)}{4\hat{\rho}_k(u)^2\sqrt{c_{k+1}(u)}} \sqrt{\epsilon} + \mathcal{O}(|\epsilon|). \]

  Hence,
\[ [z^n]G_{k}(z,u) = - \frac{d_{k+1}(u)\sqrt{\hat{\rho}_k(u)}}{4\hat{\rho}_k^2(u) \sqrt{c_{k+1}(u)}} [z^n]  \sqrt{1- \frac{z}{\hat{\rho}_k(u)}} + [z^n] \mathcal{O} \left( \hat{\rho}_k(u) - z  \right). \]

  Since $\hat{\rho}_k(u)=\frac{1}{1+2\sqrt{ku}}$ is of type $\frac{1}{2}$ and by plugging in the formula for $d_{k+1}(u)=\frac{2\hat{\rho}_k(u)\sqrt{2 -2\hat{\rho}_k(u) +8ku\hat{\rho}_k(u)}}{\prod_{l=2}^k \sqrt{c_l(u)}}$, we obtain the desired result by applying singularity analysis.
\end{proof}

From \cite[Theorem 1]{bodini2015number} we know the following result:

\begin{align}
\label{resultgleqkasymp}
[z^n]G_{k}(z,1) = \sqrt{\frac{\sqrt{k}+2k}{4\pi\prod_{l=2}^{k+1}c_l(1)}} (1+2\sqrt{k})^n n^{-\frac{3}{2}} \left( 1+ O \left( \frac{1}{\sqrt{n}} \right) \right), \ \ \ \text{as} \ n \rightarrow \infty, 
\end{align}
with $c_l(u)$ defined as in Proposition \ref{prop:rjkaroundsing}.

\medskip
Now we want to apply the well-known Quasi-Power Theorem.

\begin{theo}[Quasi-Power Theorem, \cite{hwang1998convergence}]
Let $X_n$ be a sequence of random variables with the property that \[ \mathbb{E}u^{X_n} = A(u) B(u)^{\lambda_n} \left( 1+ \mathcal{O} \left(\frac{1}{\phi_n} \right) \right) \]
holds uniformly in a complex neighbourhood of $u=1$, where $\lambda_n \rightarrow \infty$ and $\phi_n
\rightarrow \infty$, and $A(u)$ and $B(u)$ are analytic functions in a neighbourhood of $u=1$ with
$A(1)=B(1)=1$. Set $\mu = B'(1)$ and $\sigma^2=B''(1)+B'(1)-B'(1)^2$. If $\sigma^2 \neq 0$, then 
\[ \frac{X_n - \mathbb{E}X_n}{\sqrt{\mathbb{V}X_n}} \rightarrow \mathcal N(0,1), \]
with $\mathbb{E}X_n = \mu \lambda_n + A'(1) + \mathcal{O}(1/\phi_n))$ and $\mathbb{V}X_n =
\sigma^2 \lambda_n +A''(1)+A'(1)-A'(1)^2+\mathcal{O}(1/\phi_n))$.
\end{theo}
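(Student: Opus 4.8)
The plan is to pass from the factored moment generating function to the cumulant generating function, read off $\mathbb{E}X_n$ and $\mathbb{V}X_n$ from its Taylor coefficients at the origin, and then establish asymptotic normality by showing that the characteristic function of the standardized variable converges pointwise to that of $\mathcal{N}(0,1)$. First I would substitute $u=e^{s}$ and write $M_n(s)=\mathbb{E}e^{sX_n}=\mathbb{E}u^{X_n}$, so that the hypothesis reads $M_n(s)=A(e^s)B(e^s)^{\lambda_n}\bigl(1+O(1/\phi_n)\bigr)$ uniformly for $s$ in a complex neighbourhood of $0$. Since $M_n(0)=1$, the function $M_n$ is nonzero near $s=0$ and we may take logarithms, obtaining $\log M_n(s)=a(s)+\lambda_n b(s)+r_n(s)$, where $a(s)=\log A(e^s)$ and $b(s)=\log B(e^s)$ are analytic near $0$ with $a(0)=b(0)=0$ (using $A(1)=B(1)=1$), and $r_n(s)=\log\bigl(1+O(1/\phi_n)\bigr)=O(1/\phi_n)$ uniformly on a small closed disk. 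This is the key structural step: the $\lambda_n$-th power turns into a factor $\lambda_n$ multiplying the \emph{fixed} analytic function $b$.

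Next I would extract the mean and variance. Because the approximation is uniform on a complex disk, Cauchy's estimates give $r_n^{(j)}(0)=O(1/\phi_n)$ for each fixed $j$, so the cumulants $\kappa_j=(\log M_n)^{(j)}(0)$ satisfy $\kappa_j=a^{(j)}(0)+\lambda_n b^{(j)}(0)+O(1/\phi_n)$. A direct chain-rule computation gives $b'(0)=B'(1)=\mu$ and $b''(0)=B''(1)+B'(1)-B'(1)^2=\sigma^2$, the extra terms relative to $B''(1)$ coming precisely from differentiating $e^s$ twice; analogously $a'(0)=A'(1)$ and $a''(0)=A''(1)+A'(1)-A'(1)^2$. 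This yields $\mathbb{E}X_n=\kappa_1=\mu\lambda_n+A'(1)+O(1/\phi_n)$ and $\mathbb{V}X_n=\kappa_2=\sigma^2\lambda_n+A''(1)+A'(1)-A'(1)^2+O(1/\phi_n)$. Crucially, $\sigma^2\neq 0$ forces $\mathbb{V}X_n\sim\sigma^2\lambda_n\tend\infty$.

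Finally, for asymptotic normality I would compute the characteristic function of $Y_n=(X_n-\mathbb{E}X_n)/\sqrt{\mathbb{V}X_n}$. Setting $s=it/\sqrt{\mathbb{V}X_n}$, which tends to $0$ for fixed $t$ and hence eventually lies in the region of validity, we get
\[ \log\mathbb{E}e^{itY_n}=-\frac{it\,\mathbb{E}X_n}{\sqrt{\mathbb{V}X_n}}+\log M_n(s)=-\frac{it\,\mathbb{E}X_n}{\sqrt{\mathbb{V}X_n}}+\kappa_1 s+\tfrac12\kappa_2 s^2+\sum_{j\ge 3}\frac{\kappa_j}{j!}s^j. \]
The linear term cancels against $-it\,\mathbb{E}X_n/\sqrt{\mathbb{V}X_n}$ because $\kappa_1=\mathbb{E}X_n$; the quadratic term gives $\tfrac12\kappa_2 s^2=-\tfrac{t^2}{2}\,\kappa_2/\mathbb{V}X_n=-t^2/2$; and each tail term is controlled via $\kappa_j=O(\lambda_n)$ (from boundedness of $b^{(j)}(0)$, $a^{(j)}(0)$), giving $\kappa_j s^j=O\bigl(\lambda_n\,\mathbb{V}X_n^{-j/2}\bigr)=O(\lambda_n^{1-j/2})\tend 0$ for each $j\ge 3$. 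Thus $\log\mathbb{E}e^{itY_n}\tend-t^2/2$, and Lévy's continuity theorem yields $Y_n\dto\mathcal{N}(0,1)$.

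The main obstacle will be the uniform control of the error terms after rescaling. Two points require care: justifying differentiation of the asymptotic relation, which is exactly where the uniformity in a \emph{complex} neighbourhood is used through Cauchy's estimates; and bounding the whole tail $\sum_{j\ge3}\kappa_j s^j/j!$ uniformly in $n$, for which the analyticity of $b$ furnishes a geometric bound $|b^{(j)}(0)|\le C\,j!/R^j$ and hence a convergent majorant, so that the tail is $\lambda_n\cdot O(|s|^3)=O(\lambda_n^{-1/2})\tend 0$. To upgrade this from convergence in distribution to Hwang's Berry--Esseen rate $O(1/\phi_n+\lambda_n^{-1/2})$ one would instead invoke the smoothing inequality, estimating $\int\bigl|\mathbb{E}e^{itY_n}-e^{-t^2/2}\bigr|/|t|\,\dt$ over a truncated range; but for the distributional statement asserted here the continuity-theorem argument suffices.
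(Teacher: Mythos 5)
The paper offers no proof of this statement: it is quoted verbatim as a known result, with the citation to Hwang, so there is no in-paper argument to compare yours against. That said, your proposal is correct and is essentially the standard proof of the quasi-power theorem (it is close to Hwang's own argument, and to the treatment in Flajolet--Sedgewick): logarithmic reduction $\log M_n(s)=a(s)+\lambda_n b(s)+r_n(s)$, Cauchy estimates to differentiate the asymptotic relation, the chain-rule identities $b'(0)=B'(1)$ and $b''(0)=B''(1)+B'(1)-B'(1)^2$ explaining the form of $\mu$ and $\sigma^2$, geometric bounds $|b^{(j)}(0)|\le C\,j!/R^j$ to kill the cumulant tail after the rescaling $s=it/\sqrt{\mathbb{V}X_n}$, and L\'evy continuity. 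Two points deserve one explicit sentence each in a polished write-up: first, to apply Cauchy's estimates to $r_n$ you need $r_n$ to be \emph{analytic}, which follows because $r_n(s)=\log\bigl(M_n(s)/(A(e^s)B(e^s)^{\lambda_n})\bigr)$ is the logarithm of a ratio of analytic functions that is uniformly close to $1$ (hence nonvanishing, with a well-defined principal branch) on a fixed disk for $n$ large; second, the cumulant expansion $\log M_n(s)=\sum_j \kappa_j s^j/j!$ is used at the rescaled point, so you should note that $\log M_n$ is analytic on a disk of radius independent of $n$, making the series convergent there uniformly in $n$ --- your geometric majorant then closes the tail estimate as claimed. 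Your closing remark about upgrading to the Berry--Esseen rate via the smoothing inequality is accurate and is indeed how Hwang obtains the convergence rate, but it is not needed for the distributional statement. Incidentally, the paper does carry out a computation in the same spirit as your third step: in Section \ref{subsection:k=njtotal}, where the singularity type jumps and the quasi-power hypothesis fails, the authors prove normality by directly estimating the characteristic function at $u=e^{is/\sqrt{n}}$ --- effectively a hand-rolled version of exactly the rescaling argument you describe.
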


Using Theorem \ref{thm:asympgleqzu} and (\ref{resultgleqkasymp}), we get for $n \longrightarrow \infty$
\[ \mathbb{E}u^{X_n} = \frac{[z^n]G_{k}(z,u)}{[z^n]G_{k}(z,1)} = \left( \frac{1+2\sqrt{ku}}{1+2\sqrt{k}} \right)^n  \sqrt{ \frac{\sqrt{ku}+2ku}{2k+\sqrt{k}} \prod_{j=2}^{k+1} \frac{{c}_j(1)}{c_j(u)}} \left( 1+ O \left( \frac{1}{n} \right) \right), \]
where $c_1(u)=1$ and $c_j(u)=4ju-4u-1+2\sqrt{c_{j-1}(u)}$.

Thus, all assumptions for the Quasi-Power Theorem are fulfilled, and we get that the number of leaves in closed lambda-terms with De Bruijn indices at most $k$ is asymptotically normally distributed with

\[ \mathbb{E}{X_n} \sim \frac{k}{\sqrt{k}+2k}n, \ \ \ \text{and} \ \ \
 \mathbb{V}X_n \sim \frac{k^2}{2 \sqrt{k}(\sqrt{k}+2k)^2}n, \ \ \text{as} \ n \rightarrow \infty, \]
and therefore Theorem \ref{theo:mainresultlength} is shown.

\section{Total number of leaves in lambda-terms with bounded number of De Bruijn levels} 
\label{ch:totalheight}

This section is devoted to the enumeration of leaves in closed lambda-terms with a bounded number of De Bruijn levels. As in \cite{bodini2015number} let us denote by $\mathcal{P}^{(i,k)}$ the class of unary-binary trees that contain at most $k-i$ De Bruijn levels and each leaf $e$ can be coloured with one out of $i+l(e)$ colors, where $l(e)$ denotes the De Bruijn level in which the respective leaf is located. These classes can be specified by

\[ \mathcal{P}^{(k,k)} = k \mathcal{Z} + ( \mathcal{A} \times \mathcal{P}^{(k,k)} \times \mathcal{P}^{(k,k)} ), \]
and
\[ \mathcal{P}^{(i,k)} = i \mathcal{Z} + ( \mathcal{A} \times \mathcal{P}^{(i,k)} \times \mathcal{P}^{(i,k)} ) + ( \mathcal{U} \times \mathcal{P}^{(i+1,k)} ) \ \ \ \ \ \text{for} \ \ i < k. \]

By translating into generating functions we get

\[ P^{(k,k)}(z,u)= kzu + zP^{(k,k)^2}(z,u), \]
and
\[ P^{(i,k)}(z,u) = izu + zP^{(i,k)^2}(z,u) + zP^{(i+1,k)}(z,u) \ \ \ \ \ \text{for} \ \ i < k. \]

Solving yields
\[ P^{(k,k)}(z,u)=\frac{1- \sqrt{1-4kz^2u}}{2z}, \]
and
\[ P^{(i,k)}(z,u) = \frac{1- \sqrt{1-4iz^2u - 4z^2P^{(i+1,k)}}}{2z} \ \ \ \ \text{for} \ \ i < k. \]
This can be written as
\[ P^{(i,k)}(z,u) = \frac{1- \sqrt{R_{k-i+1,k}(z,u)}}{2z}, \]
where
\begin{align}
\label{radicals1k}
R_{1,k}(z,u)=1-4kz^2u, 
\end{align}
and
\begin{align}
\label{radicalsrecursion}
R_{i,k}(z,u)=1-4(k-i+1)z^2u-2z+2z \sqrt{R_{i-1,k}(z,u)}, \ \ \ \ \text{for} \ \ 2 \leq i \leq k+1. 
\end{align}

For the bivariate generating function of closed lambda-terms with at most $k$ De Bruijn levels we get
\[ H_{k }(z,u) = P^{(0,k)}(z,u) = \frac{1- \sqrt{R_{k+1,k}(z,u)}}{2z}. \]

Thus, the generating function consists again of $k+1$ nested radicals, but as stated in Section \ref{ch:mainresults}, the counting sequence of this class of lambda-terms shows a very unusual behaviour. The type of the dominant singularity of the generating function changes when the imposed bound equals $N_j$. Thus, the subexponential term in the asymptotics of the counting sequence changes. The following result has been shown in \cite{bodini2015number}:

\begin{theo}[{\cite[Theorem 3]{bodini2015number}}]
\label{theo:asymphleqkueg1}
Let $(u_i)_{i \geq 0}$ and $(N_i)_{i \geq 0}$ be the integer sequences defined in
Definition~\ref{def:sequences} and let $H_{k}(z,1)$ be the generating function of the class of
closed lambda-terms with at most $k$ De Bruijn levels. Then the following asymptotic relations hold
\begin{itemize}
	\item[(i)] If there exists $j \geq 0$ such that $N_j < k < N_{j+1}$, then there exists a constant $h_k$ such that
	\[ [z^n]H_k(z,1) \sim h_k n^{-3/2} \rho_k(1)^{-n}, \ \text{as} \ n \rightarrow \infty. \]
	\item[(ii)] If there exists $j$ such that $k=N_j$, then 
	\[ [z^n]H_k(z,1) \sim h_k n^{-5/4} \rho_k(1)^{-n} = h_k n^{-5/4} (2u_j)^{n}, \ \text{as} \ n \rightarrow \infty. \]
	\end{itemize}
\end{theo}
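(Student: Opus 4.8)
The statement is a coefficient-asymptotics result, so the plan is the standard three-step singularity-analysis programme: locate the dominant singularity $\rho_k(1)$ of $H_k(z,1)$, determine its \emph{type}, and transfer to coefficients via \cite{MR2483235}. The only genuinely delicate point is that the type is not the same in the two regimes, and the combinatorially-defined sequences $(u_i),(N_i)$ of Definition~\ref{def:sequences} are exactly what separates them.

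To locate the singularity I would exploit the nested-radical form $H_k=\frac{1-\sqrt{R_{k+1,k}}}{2z}$ and substitute $z=\frac{1}{2v}$, rescaling the radicands by $w_i:=v^2R_{i,k}(z,1)$. A short computation turns (\ref{radicals1k})--(\ref{radicalsrecursion}) into the clean recursion $w_1=v^2-k$ and $w_i=\beta+(i-1)+\sqrt{w_{i-1}}$ for $i\ge 2$, where $\beta:=v^2-v-k$. The function is analytic as long as the whole tower $w_1,\dots,w_{k+1}$ stays positive, so the dominant singularity is the smallest $z$ (largest $v$) at which the first radicand vanishes; one checks that this is always an intermediate radicand $R_{j+1,k}$, with $j$ fixed by the regime $N_j\le k<N_{j+1}$, in agreement with the definition of $\rho_k(u)$ in Theorem~\ref{theo:totalnumberleavesheight}. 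The auxiliary sequences enter through the resonant profile: setting $v=u_j$ and $k=N_j$ gives $\beta=-j$, and the self-consistent solution $w_i=u_{j-i}^2$ of the recursion forces precisely $u_{l+1}=u_l^2+l+1$, i.e. Definition~\ref{def:sequences}; in particular $w_j=w_{j+1}=0$ and $\rho_k(1)=\frac{1}{2u_j}$.

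The type of the singularity then follows from a local expansion around $\rho_k(1)$, entirely parallel to Proposition~\ref{prop:rjkaroundsing}. Writing $\epsilon=\rho_k(1)-z$: in the generic case $N_j<k<N_{j+1}$ the seed radicand $R_{j+1,k}$ has a simple zero, so $\sqrt{R_{j+1,k}}\sim c\sqrt{\epsilon}$, while every outer radicand $R_{j+2,k},\dots,R_{k+1,k}$ keeps a strictly positive constant term at $\rho_k(1)$ and therefore merely transports this $\sqrt{\epsilon}$-singularity upward; hence $H_k$ has a square-root singularity and singularity analysis yields the $n^{-3/2}$ term. In the resonant case $k=N_j$ two radicands collapse at once: $R_{j,k}$ still vanishes linearly and feeds a $\sqrt{\epsilon}$ into $R_{j+1,k}$, but because $\beta+j=0$ the \emph{polynomial part} of $R_{j+1,k}$ vanishes at $\rho_k(1)$ as well, so the $\sqrt{\epsilon}$ term dominates and $R_{j+1,k}\sim c'\sqrt{\epsilon}$; taking the square root produces an $\epsilon^{1/4}$ singularity, which the outer radicands again transport unchanged, giving the $n^{-5/4}$ term and the growth rate $(2u_j)^n$.

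The main obstacle is the location step, not the transfer. One must prove rigorously that $R_{j+1,k}$ is the radicand that vanishes \emph{first} as $z$ increases and that the inner profile $w_1,\dots,w_j$ stays strictly positive up to $\rho_k(1)$; this amounts to a monotonicity and ordering analysis of the zeros of the $w_i(v)$, to be carried out uniformly as $k$ ranges over a full regime. Two further checks pin the exponents: that the seed zero is exactly simple (no accidental higher-order degeneracy) and that the constant terms of all outer radicands are strictly positive at $\rho_k(1)$, so that the singular type is neither more nor less degenerate than claimed. Once $\rho_k(1)$ is shown to be the unique singularity on its circle (aperiodicity being immediate from the positive-coefficient tower structure), the Flajolet--Odlyzko transfer theorem on a $\Delta$-domain delivers both asymptotic relations.
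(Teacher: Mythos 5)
This theorem is not proved in the paper at all: it is imported verbatim as \cite[Theorem 3]{bodini2015number}, so there is no internal proof to compare against. Your reconstruction is nonetheless sound and follows essentially the same route as the cited source: the substitution $z=\tfrac{1}{2v}$ with $w_i=v^2R_{i,k}(z,1)$ does yield $w_1=v^2-k$ and $w_i=\beta+(i-1)+\sqrt{w_{i-1}}$ with $\beta=v^2-v-k$, and at $k=N_j$, $v=u_j$ one gets $\beta=-j$ and the self-consistent profile $w_i=u_{j-i}^2$, forcing the double zero $w_j=w_{j+1}=0$ that produces the $\epsilon^{1/4}$ singularity (this is exactly the mechanism the present paper re-uses locally in Lemma~\ref{lem:domsingvonjradicand} and in Section~\ref{ch:levelheight}), while in the generic regime only $R_{j+1,k}$ vanishes, simply, giving the square-root type. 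Your own flagged obstacles (monotonicity/ordering of the zeros of the $w_i$, simplicity of the seed zero, positivity of the outer constant terms, uniqueness of the singularity on the circle) are precisely the technical content of the reference, so the outline is an accurate account of what a full proof requires.
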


Thus, in order to investigate structural properties of this class of lambda-terms we perform a distinction of cases whether the bound $k$ is an element of the sequence $(N_i)_{i \geq 0}$ or not.

\subsection{The case $N_j < k <N_{j+1}$}
\label{subsec:total_height_njnj+1}

From \cite{bodini2015number} we know that in this case the dominant singularity of the generating function  $H_{k}(z,1)$ comes from the $(j+1)$-th radicand $R_{j+1,k}$ and is of type $\frac{1}{2}$. As in the previous section we can again use continuity arguments to guarantee that sufficiently close to $u=1$ the dominant singularity $\rho_{k}(u)$ of $H_{k}(z,u)$ comes from the $(j+1)$-th radicand $R_{j+1,k}(z,u)$ and is of type $\frac{1}{2}$. Now we will determine the expansions of the radicands in a neighbourhood of the dominant singularity.

\begin{prop}
\label{prop:exprjkaroundsing}
Let $\rho_{k}(u)$ be the dominant singularity of $H_{k}(z,u)$, where $u$ is in a sufficiently small neighbourhood of 1, \textit{i.e.} $|u-1|<\delta$ for $\delta>0$ sufficiently small. Then the expansions 
\begin{itemize}
 	\item[(i)] $\forall i < j+1 \ \text{(inner radicands)}: R_{i,k}(\rho_{k}(u)-\epsilon,u) = R_{i,k}(\rho_{k}(u),u)+ \mathcal{O}(|\epsilon|)$
 	
 	\item[(ii)] $R_{j+1,k}(\rho_{k}(u)-\epsilon,u) =  \gamma_{j+1}(u) \epsilon +\mathcal{O}(|\epsilon|^2)$, with $\gamma_{j+1}(u)=-\frac{\partial}{\partial z} R_{j+1,k}(\rho_{k}(u),u)$
 	
 	\item[(iii)] $\forall i > j+1 \ \text{(outer radicands)}: R_{i,k}(\rho_{k}(u)-\epsilon,u) =a_i(u)+b_i(u)\sqrt{\epsilon}+\mathcal{O}(|\epsilon|),$ 
 	with $a_{i+1}(u)=1-4(k-i)\rho_{k}(u)^2u-2\rho_{k}(u)+2\rho_{k}(u)\sqrt{a_i(u)}$, and $b_{i+1}(u)=\frac{b_i(u) \rho_{k}(u)}{\sqrt{a_i(u)}}$ for $j+2 \leq i \leq k$, with $a_{j+2}(u)=1-4(k-j-1)\rho_{k}(u)^2u-2\rho_{k}(u)$ and $b_{j+2}(u)=2\rho_{k}(u)\sqrt{\gamma_{j+1}(u)}$,
\end{itemize}
hold for $\epsilon \longrightarrow 0$ so that $\epsilon \in \mathbb{C} \setminus \mathbb{R}^{-}$, uniformly in $u$. 
\end{prop}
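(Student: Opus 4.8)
The plan is to exploit the same nested-radical structure used in the proof of Proposition~\ref{prop:rjkaroundsing}, the only difference being that the singularity is now produced by the $(j+1)$-th radicand rather than the innermost one. The decisive structural input, inherited from \cite{bodini2015number} together with the continuity argument preceding the statement, is that $\rho_k(u)$ solves $R_{j+1,k}(\rho_k(u),u)=0$ while all strictly inner radicands stay nonzero, i.e.\ $R_{i,k}(\rho_k(u),u)\neq 0$ (in fact positive) for $i\le j$ and $u$ close to $1$. First I would record the consequence: since $R_{1,k}$ is a polynomial and each subsequent $R_{i,k}$ depends on $z$ only through $\sqrt{R_{i-1,k}}$, the non-vanishing of $R_{1,k},\dots,R_{j,k}$ at $\rho_k(u)$ makes every one of the radicals $\sqrt{R_{1,k}},\dots,\sqrt{R_{j,k}}$ analytic in $z$ at $z=\rho_k(u)$. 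Hence $R_{i,k}(z,u)$ is analytic in $z$ at $\rho_k(u)$ for each $i\le j+1$.

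Parts (i) and (ii) then follow from a plain Taylor expansion. Writing $z=\rho_k(u)-\epsilon$ and expanding the analytic function $R_{i,k}(\cdot,u)$ gives $R_{i,k}(\rho_k(u)-\epsilon,u)=R_{i,k}(\rho_k(u),u)+\mathcal{O}(|\epsilon|)$; for $i\le j$ the constant term is nonzero and this is exactly (i). For $i=j+1$ the constant term $R_{j+1,k}(\rho_k(u),u)$ vanishes by the very definition of $\rho_k(u)$, so the expansion starts with the linear term $-\epsilon\,\frac{\partial}{\partial z}R_{j+1,k}(\rho_k(u),u)=\gamma_{j+1}(u)\epsilon$, which is (ii); here I would note that $\gamma_{j+1}(u)\neq 0$ because the singularity is of square-root type, i.e.\ $\rho_k(u)$ is a simple root of $R_{j+1,k}(\cdot,u)$.

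For part (iii) I would proceed by induction on $i$ from $j+2$ up to $k+1$, feeding the recursion~(\ref{radicalsrecursion}) into itself. For the base case $i=j+2$ one inserts (ii) into the recursion; the crucial radical becomes $\sqrt{\gamma_{j+1}(u)\epsilon+\mathcal{O}(\epsilon^2)}=\sqrt{\gamma_{j+1}(u)}\,\sqrt{\epsilon}\,(1+\mathcal{O}(\epsilon))$, which produces a genuine $\sqrt{\epsilon}$ contribution, while the polynomial part $1-4(k-j-1)z^2u-2z$ contributes the constant $a_{j+2}(u)$ plus $\mathcal{O}(|\epsilon|)$; this yields $a_{j+2}(u)$ and $b_{j+2}(u)=2\rho_k(u)\sqrt{\gamma_{j+1}(u)}$. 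For the inductive step, assuming $R_{i,k}(\rho_k(u)-\epsilon,u)=a_i(u)+b_i(u)\sqrt{\epsilon}+\mathcal{O}(|\epsilon|)$ with $a_i(u)>0$, I would factor
\[ \sqrt{R_{i,k}(\rho_k(u)-\epsilon,u)}=\sqrt{a_i(u)}\Bigl(1+\tfrac{b_i(u)}{2a_i(u)}\sqrt{\epsilon}+\mathcal{O}(|\epsilon|)\Bigr), \]
multiply by $2(\rho_k(u)-\epsilon)$, and add the polynomial terms of the recursion; collecting the constant and $\sqrt{\epsilon}$ coefficients reproduces exactly the stated recursions $a_{i+1}(u)=1-4(k-i)\rho_k(u)^2u-2\rho_k(u)+2\rho_k(u)\sqrt{a_i(u)}$ and $b_{i+1}(u)=b_i(u)\rho_k(u)/\sqrt{a_i(u)}$, all $\epsilon^{3/2}$ and $\epsilon^2$ terms being absorbed into $\mathcal{O}(|\epsilon|)$.

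The step that needs genuine care, and which I expect to be the main obstacle, is the positivity of the constant terms $a_i(u)$ used to extract $\sqrt{a_i(u)}$ in the inductive step: without it the factorization of the radical, and with it the whole expansion, breaks down. Letting $\epsilon\to 0$ in the expansion identifies $a_i(u)=R_{i,k}(\rho_k(u),u)$, so the required positivity is precisely the statement that the outer radicands do not vanish at the singularity --- which at $u=1$ is exactly the content of ``the singularity comes only from the $(j+1)$-th radicand'' established in \cite{bodini2015number}, and which extends to a neighbourhood $|u-1|<\delta$ by continuity, just as the positivity of the $c_l$'s was obtained in Proposition~\ref{prop:rjkaroundsing}. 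The same compactness of a small neighbourhood of $u=1$, on which $\rho_k(u)$, $\gamma_{j+1}(u)$ and the $a_i(u)$ are continuous and bounded away from $0$, finally yields the asserted uniformity in $u$ of all the $\mathcal{O}$-terms.
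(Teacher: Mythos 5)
Your proposal is correct and follows essentially the same route as the paper: Taylor expansion for the inner radicands and for $R_{j+1,k}$ (using $R_{j+1,k}(\rho_k(u),u)=0$), then induction on the outer radicands via the recursion, with the same base case and the same recursions for $a_i(u)$ and $b_i(u)$. The only difference is that you justify the non-vanishing of the inner radicands, of $\gamma_{j+1}(u)$, and of the $a_i(u)$ inside the proof itself, whereas the paper defers these positivity checks to the discussion following the proposition (where $a_{j+2}(1)>0$, the monotonicity of $a_i$, and $b_{j+2}\neq 0$ are verified); this is a presentational difference, not a mathematical one.
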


\begin{proof}
\begin{itemize}
\item[(i)] The first equation (for $i<j+1$) follows immediately by Taylor expansion around $\rho_{k}(u)$ and setting $z=\rho_{k}(u)-\epsilon$.
\item[(ii)] The equation for $i=j+1$ follows analogously to the first case, knowing that $R_{j+1,k}(z,u)$ cancels for $z=\rho_{k}(u)$.
\item[(iii)] The next step is to expand $R_{i,k}(z,u)$ around $\rho_{k}(u)$ for $i >j+1$. From the second claim of Proposition \ref{prop:exprjkaroundsing} and from the recurrence relation (\ref{radicalsrecursion}) for $R_{i,k}(z,u)$ it results
\[ R_{j+2,k}(\rho_{k}(u)-\epsilon,u) = 1 - 4(k-j-1)\rho_{k}(u)^2u -2\rho_{k}(u) + 2\rho_{k}(u) \sqrt{ \gamma_{j+1}(u)}\sqrt{\epsilon} + \mathcal{O}(|\epsilon|). \]
We set $a_{j+2}(u):=1 - 4(k-j-1)\rho_{k}^2(u)u-2\rho_{k}(u)$ and $b_{j+2}(u):=2\rho_{k}(u) \sqrt{\gamma_{j+1}(u)}$. Now we proceed by induction. 
Assume $R_{i,k}(\rho_{k}(u)-\epsilon,u)= a_i(u)+b_i(u) \sqrt{\epsilon}+\mathcal{O}(|\epsilon|)$.
We have just checked that it holds for $i=j+2$.
Now we perform the induction step $i \mapsto i+1$.

Using the recursion (\ref{radicalsrecursion}) for $R_{i,k}$ and plugging in the expansion $a_i(u)+b_i(u) \sqrt{\epsilon}+\mathcal{O}(|\epsilon|)$ for $R_{i,k}(\rho_{k}(u)-\epsilon,u)$ yields
\[ R_{i+1,k}(\rho_{k}(u)-\epsilon,u)=1-4(k-i)\rho_{k}(u)^2u - 2\rho_{k}(u) +2\rho_{k}(u) \sqrt{a_i(u)} + \frac{b_i(u) \rho_{k}(u)}{\sqrt{a_i(u)}} \sqrt{\epsilon} + \mathcal{O}(|\epsilon|). \] 
Setting $\forall i \geq j+2\ \ a_{i+1}(u):=1-4(k-i)\rho_{k}^2(u)u - 2\rho_{k}(u) +2\rho_{k}(u) \sqrt{a_i(u)}$ and $b_{i+1}(u):=\frac{b_i(u) \rho_{k}(u)}{\sqrt{a_i(u)}},$
we obtain
\[ R_{i+1,k}(\rho_{k}(u)-\epsilon,u)= a_{i+1}(u)+b_{i+1}(u)\sqrt{\epsilon} + \mathcal{O}(|\epsilon|). \]
Expanding $b_i(u)$, using its recursive relation and $b_{j+2}(u)=2\rho_{k}(u) \sqrt{\gamma_{j+1}(u)}$ we get for $i > j+1$
\[ b_i(u) = \frac{2\rho_{k}^{i-j}(u)\sqrt{\gamma_{j+1}(u)}}{\prod_{l=j+1}^{i-1} \sqrt{a_l(u)}}. \]
\end{itemize}
\end{proof}

We know that for sufficiently large $i$ the sequence $u_i$ is given by $u_i = \lfloor \chi^{2^i} \rfloor$, with $\chi \approx 1.36660956 \ldots$ (see \cite[Lemma 18]{bodini2015number}). Therefore we have $N_j \sim u_j^2 \sim \chi^{2^j 2}$ and $N_j<k<N_{j+1}=O(N_j^2)$, which gives $j \asymp \log \log k$. This implies that $j+1<k+1$, \textit{i.e.}, that the dominant singularity $\rho_{k}(u)$ cannot come from the outermost radical.

\begin{rem}
Obviously the same is true for the case $k=N_j$. Thus, the dominant singularity never comes from the outermost radical.
\end{rem}

Using Proposition \ref{prop:exprjkaroundsing} and $H_{k}(z,u)=\frac{1}{2z}(1-\sqrt{R_{k+1,k}(z,u)})$ we get

\[ H_{k}(\rho_{k}(u)-\epsilon,u)= \frac{1-\sqrt{a_{k+1}(u)}}{2\rho_{k}(u)}-\frac{b_{k+1,k}(u)}{4\rho_{k}(u)\sqrt{a_{k+1}(u)}} \sqrt{\epsilon} + \mathcal{O}(|\epsilon|), \]

which yields
\begin{align}
\label{equ_znhkzutotal}
 [z^n]H_{k}(z,u) = h_k(u) \rho_{k}(u)^{-n} \frac{n^{-\frac{3}{2}}}{\Gamma(-\frac{1}{2})} \left( 1+ O \left( \frac{1}{\sqrt{n}} \right) \right), 
 \end{align}
with 
\[ h_k(u) = -\frac{b_{k+1}(u)\sqrt{\rho_k(u)}}{4\rho_{k}(u)\sqrt{a_{k+1}(u)}}. \]

Taking a look at the recursive definitions of $a_{i}(u)$ and $b_i(u)$ (see Proposition \ref{prop:exprjkaroundsing}), it can easily be seen that these functions are not equal to zero in a neighbourhood of $u=1$. 
We know that $a_{j+2}(1)$ is positive, since
\begin{align*}
a_{j+2}(1)=1- 4(k-j-1)\rho_k(1)^2-2\rho_k(1)= 1- 4(k-j)\rho_k(1)^2-2\rho_k(1) + 4\rho_k^2,
\end{align*}
and $1- 4(k-j)\rho_k(1)^2-2\rho_k(1)>0$ (see \cite{bodini2015number}).
By induction we can show that the sequence $a_i:=a_i(1)$ is monotonically increasing. Let us assume that $a_{i-1}<a_i$, then we get
\begin{align*}
a_{i+1}&>1-4(k-i)\rho_k(1)^2-2\rho_k(1)+2\rho_k(1)\sqrt{a_i} \\
&> 1- 4(k-i+1)\rho_k(1)^2-2\rho_k(1)+2\rho_k(1)\sqrt{a_{i-1}} + 4\rho_k(1)^2 > a_i + 4\rho_k(1)^2.
\end{align*}
It is obvious that if $b_{j+2}:=b_{j+2}(1)$ is non-zero, than all the $b_i$'s, which are defined by
\begin{align*}
b_i = \frac{\rho_k(1)b_{i-1}}{a_{i-1}},
\end{align*}
are non-zero. In order to prove that $b_{j+2}=2\rho_k(1) \sqrt{-\frac{\partial}{\partial z} R_{j+1,k}(\rho_k(1),1)}$ is non-zero, we also proceed by induction. Since
\begin{align*}
R_{1,k}(z,1)=1-4kz^2,
\end{align*}
we can see that $\frac{\partial}{\partial z} R_{1,k}(\rho_k(1),1) <0$, and assuming $\frac{\partial}{\partial z} R_{i,k}(\rho_k(1),1) < 0$ and using
\begin{align*}
\frac{\partial}{\partial z} R_{i+1,k}(z,1)= -8(k-i)z-2+2\sqrt{R_{i,k}(z,1)}+\frac{z}{\sqrt{R_{i,k}(z,1)}} \frac{\partial}{\partial z} R_{i}(z,1),
\end{align*}
we proved that all $b_i$'s are non-zero.
Thus, we get that $h_k(u) \neq 0$.

\medskip
Using (\ref{equ_znhkzutotal}) and Theorem \ref{theo:asymphleqkueg1} we get for $n \longrightarrow \infty$
\begin{align}
\label{equ:znhleqkzuznhleqkz1}
 \frac{[z^n]H_{k}(z,u)}{[z^n]H_{k}(z,1)} = \frac{h_k(u)}{h_k \Gamma(-1/2)} \left( \frac{\rho_{k}(1)}{\rho_{k}(u)} \right)^n \left( 1+ O \left( \frac{1}{n} \right) \right).
\end{align}

Assuming that $\sigma^2=B''(1)+B'(1)-B'(1)^2\neq 0$ with $B(u)=
\frac{\rho_{k}(1)}{\rho_{k}(u)}$ we can apply the Quasi-Power Theorem. As stated in Section \ref{ch:mainresults} the proof of this assumption appears to be quite difficult, since there is only very little known about the function $\rho_{k}(u)$. However, it seems very likely that this condition will be
fulfilled for arbitrary $k \in (N_j, N_{j+1})$, so that the Quasi-Power Theorem can be applied and
we get that the number of leaves in lambda-terms with bounded number of De Bruijn levels is asymptotically normally distributed with asymptotic mean $\mu n$ and variance $\sigma^2 n$, respectively, where $\mu = B'(1)$ and $\sigma^2=B''(1)+B'(1)-B'(1)^2$, with $B(u)= \frac{\rho_{k}(1)}{\rho_{k}(u)}$.

\begin{table}[h!]
\center
\small
\begin{tabular}{c|c|c|c}
bound $k$ & $j+1$ & $B''(1)+B'(1)-B'(1)^2$ & $B'(1)$ \\ \hline
\bf{1}	& \bf{2} & \bf{0} & \bf{0} \\
2	& 2 & 0.0385234386	 & 0.4381229337 \\
3	& 2 & 0.0210625856	 & 0.4414407371 \\
4	& 2 & 0.0167136805	 & 0.4463973717 \\
5	& 2 & 0.0148700270	 & 0.4504258849 \\
6	& 2 & 0.0138224393	 & 0.4536185043 \\
7	& 2 & 0.0131157948	 & 0.4561987871 \\
\bf{8}	& \bf{3} & \bf{0.0125868052} & \bf{0.4583333333} \\
9	& 3 & 0.0582322465	 & 0.4566104777 \\
10	& 3 & 0.0470481360	 & 0.4560418340 \\
11	& 3 & 0.0396601986	 & 0.4560810348 \\
12	& 3 & 0.0345090124	 & 0.4564489368 \\
\vdots & \vdots & \vdots & \vdots \\
133 & 3 & 0.0077469541 & 0.4821900098 \\
134 & 3 & 0.0077234960 & 0.4822482745 \\
\bf{135} & \bf{4} & \bf{0.0077002803} & \bf{0.4823059361} \\
136 & 4 & 0.0132855719 & 0.4823515285 \\
137 & 4 & 0.0131816901 & 0.4823968564 \\
138 & 4 & 0.0130800422 & 0.4824419195 \\
139 & 4 & 0.0129805564 & 0.4824867175
 \end{tabular}
\vspace{2mm}
\caption{Table summarizing the coefficients occurring in the variance and the mean for some initial values of $k$.}
\label{tab:initialvalues}
\end{table}

\subsection{The case $k = N_j$}
\label{subsection:k=njtotal}

We know from \cite{bodini2015number} that in the case $k = N_j$ both radicands $R_{j,k}(z,1)$ and $R_{j+1,k}(z,1)$ vanish simultaneously and the dominant singularity is therefore of type $\frac{1}{4}$.
This is not true for the radicands $R_{j,k}(z,u)$ and $R_{j+1,k}(z,u)$ when $u$ is in a neighbourhood of 1. Thus, we have a discontinuity at $\rho_k(1)$, which is why we do not get any uniform expansions of the radicands in a neighbourhood of $\rho_k(1)$.

In order to overcome this problem we will set $u=1+\epsilon$ and investigate how the radicands behave in a neighbourhood of the dominant singularity $\rho_k(u)=\rho_k(1+\epsilon)$. Subsequently we will use the abbreviation $\rho_k:=\rho_k(1)$.

\begin{lemma}
\label{lem:domsingvonjradicand}
For $u=1+\epsilon$ with $\epsilon \longrightarrow 0$ so that $\epsilon \in \mathbb{C} \setminus \mathbb{R}^{-}$, the dominant singularity $\rho_k(u)=\rho_k(1+\epsilon)$ of the bivariate generating function $H_k(z,1+\epsilon)$ comes from the $j$-th radicand $R_{j,k}(z,u)$.
\end{lemma}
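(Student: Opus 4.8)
The plan is to reduce the statement to a single sign condition and then verify that sign using the defining relation $k=N_j$. Write $\rho_k:=\rho_k(1)$; by Theorem~\ref{theo:asymphleqkueg1}(ii) we have $\rho_k=\frac{1}{2u_j}$, and by the coincidence discovered in \cite{bodini2015number} both radicands vanish there, $R_{j,k}(\rho_k,1)=R_{j+1,k}(\rho_k,1)=0$. Since the radicands are nested, the dominant singularity of $H_k(z,u)$ is the smallest positive $z$ at which the chain of square roots first loses analyticity, and because $R_{j,k}$ sits \emph{inside} $R_{j+1,k}$, the task is to decide whether $R_{j,k}(\cdot,u)$ vanishes before $R_{j+1,k}(\cdot,u)$. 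I would therefore let $\alpha(u)$ denote the smallest positive root of $R_{j,k}(\cdot,u)$, which is a \emph{simple} root (the inner radicands $R_{i,k}$, $i<j$, staying strictly positive), and note that at $z=\alpha(u)$ the radical term of $R_{j+1,k}$ vanishes, so that
\[ R_{j+1,k}(\alpha(u),u)=1-4(k-j)\alpha(u)^2u-2\alpha(u)=:\phi(\alpha(u),u). \]
The lemma is then equivalent to the claim $\phi(\alpha(u),u)>0$ for $u=1+\epsilon$ with $\epsilon\to0$ off the negative reals: this says that $R_{j+1,k}$ is still strictly positive when $R_{j,k}$ first vanishes, so the branch point of $H_k$ is caused by the $j$-th radicand. (That the outer radicands $R_{i,k}$, $i>j+1$, remain positive at the singularity, hence do not interfere, is known from \cite{bodini2015number}.)

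Next I would expand $\phi(\alpha(u),u)$ about $u=1$. Since $\phi(\alpha(1),1)=\phi(\rho_k,1)=0$, the sign for small $\epsilon$ is governed by
\[ \frac{d}{du}\phi(\alpha(u),u)\Big|_{u=1}=\phi_z(\rho_k,1)\,\alpha'(1)+\phi_u(\rho_k,1),\qquad \alpha'(1)=-\frac{\partial_uR_{j,k}(\rho_k,1)}{\partial_zR_{j,k}(\rho_k,1)}. \]
Using $\rho_k=\frac{1}{2u_j}$ together with the identity $k-j=u_j^2-u_j$ (equivalent to $k=N_j$), the two partial derivatives of $\phi$ simplify to $\phi_z(\rho_k,1)=2(1-2u_j)$ and $\phi_u(\rho_k,1)=\frac{1-u_j}{u_j}$, both negative. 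Since $R_{j,k}$ decreases through its simple root one has $\partial_zR_{j,k}(\rho_k,1)<0$, and since every radicand decreases in $u$ one has $\partial_uR_{j,k}(\rho_k,1)<0$; hence $\alpha'(1)<0$, so the term $\phi_z(\rho_k,1)\,\alpha'(1)$ is \emph{positive} while $\phi_u(\rho_k,1)$ is negative. The whole point is thus to show that the positive contribution dominates.

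The main obstacle is precisely this cancellation: resolving the sign requires the exact values of $\partial_zR_{j,k}(\rho_k,1)$ and $\partial_uR_{j,k}(\rho_k,1)$, which are only accessible through the recursions
\[ \partial_zR_{j,k}=-8(k-j+1)zu-2+2\sqrt{R_{j-1,k}}+\frac{z\,\partial_zR_{j-1,k}}{\sqrt{R_{j-1,k}}},\qquad \partial_uR_{j,k}=-4(k-j+1)z^2+\frac{z\,\partial_uR_{j-1,k}}{\sqrt{R_{j-1,k}}}, \]
unrolled down to $R_{1,k}$ and evaluated at $(\rho_k,1)$, using $k=N_j$ to keep the nested square roots in closed form. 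I expect this to collapse to a clean positive quantity: in the first nontrivial case $j=2$, $k=N_2=8$, one finds $\rho_k=\tfrac16$, $\partial_zR_{2,8}(\rho_k,1)=-16$, $\partial_uR_{2,8}(\rho_k,1)=-\tfrac{11}{9}$, hence $\alpha'(1)=-\tfrac{11}{144}$ and $\frac{d}{du}\phi(\alpha(u),u)|_{u=1}=\tfrac{7}{72}>0$, confirming $\phi(\alpha(u),u)\sim\tfrac{7}{72}\,\epsilon>0$; but carrying this through uniformly in $j$ is the real work. Finally, to pass from real $\epsilon>0$ to $\epsilon\in\mathbb{C}\setminus\mathbb{R}^-$ I would invoke the analyticity of $\rho_k(u)$ and of the simple root $\alpha(u)$: since $\phi(\alpha(u),u)$ has an isolated simple zero at $\epsilon=0$ with nonzero derivative, $R_{j+1,k}(\alpha(u),u)\neq0$ throughout a slit neighbourhood, so $\rho_k(u)$ continues analytically as the (type-$\tfrac12$) root of $R_{j,k}(\cdot,u)$, which is exactly the assertion of the lemma.
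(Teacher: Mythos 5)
Your reduction of the lemma to the sign condition $\phi(\alpha(u),u)>0$, with $\phi(z,u)=1-4(k-j)z^2u-2z$ and $\alpha(u)$ the simple root of $R_{j,k}(\cdot,u)$ near $\rho_k$, is a genuinely different route from the paper's. The paper never isolates a sign at all: it expands both $R_{j,k}$ and $R_{j+1,k}$ directly at the (a priori unknown) dominant singularity $\rho_k(1+\epsilon)$ and argues from $R_{j,k}=\Theta(|\epsilon|)$ versus $R_{j+1,k}=2\rho_k\sqrt{R_{j,k}}+\mathcal{O}(|\epsilon|)=\Theta(\sqrt{|\epsilon|})$ that the $(j+1)$-th radicand cannot be the one that vanishes. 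Your route has the merit of making explicit exactly where the difficulty sits, namely the competition between the positive term $\phi_z(\rho_k,1)\,\alpha'(1)$ and the negative term $\phi_u(\rho_k,1)$; your evaluations $\phi_z(\rho_k,1)=2(1-2u_j)$ and $\phi_u(\rho_k,1)=(1-u_j)/u_j$, and the worked instance $j=2$, $k=8$ yielding $7/72>0$, all check out against the recursions.

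As a proof, however, the proposal is incomplete, and the missing step carries essentially all of the content. You write that ``carrying this through uniformly in $j$ is the real work'' and then do not carry it out: the required inequality $\phi_z(\rho_k,1)\,\alpha'(1)+\phi_u(\rho_k,1)>0$ amounts to the lower bound $|\alpha'(1)|>\frac{u_j-1}{u_j(4u_j-2)}$ on the ratio $|\partial_u R_{j,k}(\rho_k,1)/\partial_z R_{j,k}(\rho_k,1)|$, which must be extracted from the full nested recursion down to $R_{1,k}$; your own numerics for $j=2$ show the two competing quantities ($11/144$ against $1/15$) differ by only about $15\%$, so the cancellation is genuinely delicate and a single verified case does not establish the lemma for all $k=N_j$. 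A secondary, smaller gap is the passage from real $\epsilon>0$ to $\epsilon\in\mathbb{C}\setminus\mathbb{R}^-$: knowing $R_{j+1,k}(\alpha(u),u)\neq0$ controls only the value at $z=\alpha(u)$, whereas identifying the dominant singularity for complex $u$ requires ruling out zeros of $R_{j+1,k}(\cdot,u)$ of smaller modulus than $\alpha(u)$, which your concluding analyticity remark does not quite do.
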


\begin{proof}
Setting $u=1+\epsilon$,expanding $\rho_k(u)$ around $1$ and plugging into the recursive definition of the radicands yields \begin{align*}
R_{j,k} \( \rho_k(1+\epsilon),1+\epsilon \) = &1-4(k-j+1)(\rho_k^2+2\rho_k\rho_k'\epsilon+ (\rho_k'^2+2\rho_k\rho_k'')\epsilon^2 +\rho_k^2\epsilon) \\  &- (2\rho_k+2\rho_k'\epsilon +2\rho_k''\epsilon^2) \(1-\sqrt{R_{j-1,k}\( \rho_k(1+\epsilon),1+\epsilon \)} \) + \mathcal{O} ( |\epsilon| ).
\end{align*}
Using $1-4(k-j)\rho_k^2-2\rho_k=0$ and $\sqrt{R_{j-1,k}\( \rho_k(1+\epsilon),1+\epsilon \)} = \sqrt{R_{j-1,k}\( \rho_k,1 \) + \mathcal{O}(|\epsilon|)} = 2\rho_k + \mathcal{O}(|\epsilon|)$, which are both shown in \cite{bodini2015number}, we get
\begin{align*}
R_{j,k} \( \rho_k(1+\epsilon),1+\epsilon \) = &-4(k-j+1)(2\rho_k\rho_k'\epsilon+ (\rho_k'^2+2\rho_k\rho_k'')\epsilon^2) \\ &- (2\rho_k'\epsilon +2\rho_k''\epsilon^2) \(1-2\rho_k + \mathcal{O}(|\epsilon|) \) + \mathcal{O} ( |\epsilon| ).
\end{align*}
Thus, $R_{j,k} \( \rho_k(1+\epsilon),1+\epsilon \)= \Theta(|\epsilon|)$.

Using this result and again the recursive definition of the radicands results in
\begin{align*}
R_{j+1,k} \( \rho_k(u),1+\epsilon \)= 2\sqrt{R_{j,k}(\rho_k(u),1+\epsilon)} + \mathcal{O} \( |\epsilon| \) = \Theta(\sqrt{|\epsilon|}).
\end{align*}

Thus, we see that $|R_{j+1,k} (\rho_k(u),u)| \gg |R_{j,k} (\rho_k(u),u)|$ in a neighbourhood of $u=1$, which implies that the dominant singularity has to come from the $j$-th radicand, \textit{i.e.} $R_{j,k}(\rho_k(u),u)=0$ for $u$ being sufficiently close to $1$.
\end{proof}

Now that we know that in this case ($k=N_j$) the dominant singularity of $H_k(z,u)$ in a neighbourhood of $u=1$ comes from the $j$-th radicand, we investigate the expansions of the radicands thoroughly for $u=1+\frac{s}{\sqrt{n}}$ in a neighbourhood with radius $\frac{t}{n}$, where $s$ and $t$ are both bounded complex numbers (\textit{cf}. Figure \ref{fig:proofidea}).

\begin{lemma}
\label{lem:exprjrj+1rho1}
Let $z=\rho_k(u)=\rho_k(1+\frac{s}{\sqrt{n}})$ be the dominant singularity of the bivariate generating function $H_k(z,1+\frac{s}{\sqrt{n}})$ with bounded $s \in \mathbb{C}$. Then, as $n \longrightarrow \infty$,
\begin{itemize}
	\item[(i)] $R_{j,k} \Big(\rho_k(u)\(1+\frac{t}{n}\),1+\frac{s}{\sqrt{n}}\Big) = \frac{1}{n} p_j(t) + \mathcal{O} \( \frac{1}{n^{3/2}} \)$, \newline
with $p_j(t):=-8t(k-j+1)\rho_k^2 - 2\rho_kt+4\rho_k^2t+2t\rho_kf(\frac{t}{n})$ where $f(\frac{t}{n})$ is an analytic function around 0;\\
	\item[(ii)] $R_{j+1,k}\Big(\rho_k(u)\(1+\frac{t}{n}\),1+\frac{s}{\sqrt{n}}\Big) =  \frac{1}{\sqrt{n}} p_{j+1}(s,t) + \mathcal{O} \( \frac{1}{n} \),$ \newline
where $p_{j+1}(s,t)=2\rho_k \sqrt{p_j(t)}-4(k-j)(2\rho_k\rho_k's+\rho_k^2s)-2\rho_k's$;\\
	\item[(iii)] $R_{i,k}\Big(\rho_k(u)\(1+\frac{t}{n}\),1+\frac{s}{\sqrt{n}}\Big) = \hat{C}_{i} + \frac{1}{\sqrt[4]{n}} p_{i}(s,t) + O \( \frac{1}{\sqrt{n}} \) \ \ \ \ \text{for} \ i \geq j+2$, \newline
where $\hat{C}_{i}$ are constants and $p_{i}(s,t)$ analytic functions $s$ and $t$.
\end{itemize}
\end{lemma}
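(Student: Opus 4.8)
The plan is to build all three expansions on the single fact furnished by Lemma~\ref{lem:domsingvonjradicand}, namely that $R_{j,k}(\rho_k(u),u)=0$ for $u$ in a neighbourhood of $1$, and then to push this information outward through the recursion~(\ref{radicalsrecursion}). Throughout I would keep the two scales strictly separated: the variable $u=1+s/\sqrt n$ drives the ``horizontal'' perturbation, whereas the factor $1+t/n$ moves $z$ off the singularity $\rho_k(u)$ by a relative amount of order $1/n$. The bounded complex parameters $s$ and $t$ stay fixed, and all estimates are to be uniform in them.

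For (i) I would Taylor expand $R_{j,k}(\cdot,u)$ at fixed $u$ around $z=\rho_k(u)$, where it vanishes. With $z-\rho_k(u)=\rho_k(u)\,t/n$ the leading term is $\frac{t}{n}\,\rho_k(u)\frac{\partial}{\partial z}R_{j,k}(\rho_k(u),u)$, of order $1/n$, the quadratic remainder being $O(1/n^2)$. Differentiating~(\ref{radicalsrecursion}) and using $\sqrt{R_{j-1,k}(\rho_k,1)}=2\rho_k$ (shown in \cite{bodini2015number}) produces the explicit polynomial part $-8t(k-j+1)\rho_k^2-2\rho_k t+4\rho_k^2 t$, while the smooth contribution of the inner, non-vanishing radical $\sqrt{R_{j-1,k}}$, analytic in the relative shift $t/n$, is exactly what is collected into $f(t/n)$. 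Finally, replacing $\rho_k(u),u$ by $\rho_k,1$ in the coefficient costs $O(s/\sqrt n)\cdot O(1/n)=O(n^{-3/2})$, which is the claimed remainder.

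For (ii) I would feed (i) into $R_{j+1,k}=1-4(k-j)z^2u-2z+2z\sqrt{R_{j,k}}$. Its polynomial part $1-4(k-j)z^2u-2z$ vanishes at leading order thanks to the relation $1-4(k-j)\rho_k^2-2\rho_k=0$ that underlies the type-$\frac{1}{4}$ singularity at $k=N_j$, so its first nonzero contribution comes solely from the $s/\sqrt n$-expansions of $u$ and of $\rho_k(u)=\rho_k+\rho_k's/\sqrt n+\cdots$, yielding the terms $-4(k-j)(2\rho_k\rho_k's+\rho_k^2s)-2\rho_k's$ of order $1/\sqrt n$. Simultaneously $R_{j,k}=\Theta(1/n)$ by (i), so $2z\sqrt{R_{j,k}}=2\rho_k\sqrt{p_j(t)}/\sqrt n+O(1/n)$ contributes at the same order; adding the two pieces gives $p_{j+1}(s,t)$.

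For (iii) I would induct on $i\ge j+2$. In the base case $R_{j+2,k}=\hat C_{j+2}+2\rho_k\sqrt{R_{j+1,k}}+\cdots$ with $\hat C_{j+2}=R_{j+2,k}(\rho_k,1)>0$, and $\sqrt{R_{j+1,k}}=\Theta(n^{-1/4})$ by (ii); the induction step substitutes $R_{i,k}=\hat C_i+n^{-1/4}p_i+O(n^{-1/2})$ into~(\ref{radicalsrecursion}) and expands $\sqrt{\hat C_i+n^{-1/4}p_i}=\sqrt{\hat C_i}+\frac{p_i}{2\sqrt{\hat C_i}}\,n^{-1/4}+O(n^{-1/2})$, so the order $n^{-1/4}$ is preserved up to $i=k+1$; positivity of the constants $\hat C_i$, needed to keep these square roots analytic, follows from the same monotonicity argument as in Section~\ref{subsec:total_height_njnj+1}. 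The main obstacle is the bookkeeping in (i): one must carry the $1/n$ perturbation of $z$ and the $1/\sqrt n$ perturbation of $u$ at once and check that their cross terms land in the $O(n^{-3/2})$ remainder rather than leaking into $p_j(t)$, all while the expansions remain uniform in the bounded parameters $s,t$.
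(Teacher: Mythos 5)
Your proposal is correct and follows essentially the same route as the paper: it rests on Lemma~\ref{lem:domsingvonjradicand} (the vanishing $R_{j,k}(\rho_k(u),u)=0$), the two-scale expansion in $s/\sqrt n$ and $t/n$, the identities $1-4(k-j)\rho_k^2-2\rho_k=0$ and $\sqrt{R_{j-1,k}(\rho_k,1)}=2\rho_k$ from \cite{bodini2015number}, and an outward induction through the recursion~(\ref{radicalsrecursion}) for part (iii), with the same constants $\hat C_{j+2}=4\rho_k^2$ and $p_{i+1}=\rho_k p_i/\sqrt{\hat C_i}$. Your phrasing of (i) as a Taylor expansion in $z$ at fixed $u$ is just a cleaner organization of the paper's explicit cancellation; the substance is identical.
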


\begin{proof}
We start with setting $u=1+\frac{s}{\sqrt{n}}$ and $z=\rho_k(u)(1+\frac{t}{n})$ with bounded $s,t \in \mathbb{C}$ (\textit{cf.} Figure \ref{fig:proofidea}), which results in
\begin{align*}
&R_{j+1,k}\(\rho_k(u)\(1+\frac{t}{n}\),1+\frac{s}{\sqrt{n}}\)= \\
&1-4(k-j)\rho_k(u)^2\(1+\frac{t}{n}\)^2\(1+\frac{s}{\sqrt{n}}\)-2\rho_k(u)\(1+\frac{t}{n}\)\(1- \sqrt{R_{j,k}}\),\\ 
\text{and}\\
&R_{j,k}\(\rho_k(u)\(1+\frac{t}{n}\),1+\frac{s}{\sqrt{n}}\)=\\
&1-4(k-j+1)\rho_k(u)^2\(1+\frac{t}{n}\)^2\(1+\frac{s}{\sqrt{n}}\)-2\rho_k(u)\(1+\frac{t}{n}\)\(1- \sqrt{R_{j-1,k}} \), 
\end{align*}
%
where the radicand in the square root in the last bracket of both equations is of course also evaluated at $(z,u)=\(\rho_k(1+\frac{s}{\sqrt{n}})(1+\frac{t}{n}),1+\frac{s}{\sqrt{n}}\)$, but we will omit this notation from now on to ensure a simpler reading, \textit{\textit{i.e.}}, subsequently we will write $R_{i,k}$ instead of $R_{i,k}\(\rho_k(1+\frac{s}{\sqrt{n}})\(1+\frac{t}{n}\),1+\frac{s}{\sqrt{n}}\)$.

Expanding $\rho_k(1+\frac{s}{\sqrt{n}})$ around 1 and using the recursive definition for the radicands yields
\begin{align} 
\label{equ:rjkasympnearueq1}
\begin{aligned}
R_{j,k} &= 1-4(k-j+1) \( \rho_k^2 + 2\rho_k \rho_k' \frac{s}{\sqrt{n}} + (\rho_k'^2+2\rho_k\rho_k'') \frac{s^2}{n} + \rho_k^2 \frac{s}{\sqrt{n}} + 2\rho_k \rho_k' \frac{s^2}{n} + \rho_k^2 \frac{2t}{n} \) \\
&-2\( \rho_k + \rho_k' \frac{s}{\sqrt{n}} + \rho_k'' \frac{s^2}{2n} + \rho_k \frac{t}{n} \) \( 1- \sqrt{R_{j-1,k}}\) + \mathcal{O} \( \frac{1}{n^{3/2}} \). 
\end{aligned}
\end{align}

From Lemma \ref{lem:domsingvonjradicand} we know that for $u$ in a sufficiently small vicinity of 1 the dominant singularity of $H_k(z,u)$ comes from the $j$-th radicand, \textit{i.e.} $R_{j,k}\( \rho_k(u),u \)=0$.
Expanding $R_{j,k}\( \rho_k(1+\frac{s}{\sqrt{n}}),1+\frac{s}{\sqrt{n}} \)$ this yields
\begin{align*}
&1-4(k-j+1)\( \rho_k^2 + 2\rho_k \rho_k' \frac{s}{\sqrt{n}} + (\rho_k'^2+2\rho_k\rho_k'') \frac{s^2}{n} + \rho_k^2 \frac{s}{\sqrt{n}} + 2\rho_k \rho_k' \frac{s^2}{n} \) \\
&-2\( \rho_k + \rho_k' \frac{s}{\sqrt{n}} + \rho_k'' \frac{s^2}{2n} \) \( 1- \sqrt{R_{j-1,k}\( \rho_k \( 1+\frac{s}{\sqrt{n}} \),1+\frac{s}{\sqrt{n}} \)}\) + \mathcal{O} \( \frac{1}{n^{3/2}} \)=0.
\end{align*}

Thus, Equation (\ref{equ:rjkasympnearueq1}) simplifies to
\begin{align}
\label{equ:rjkasympfertig}
R_{j,k} = -4(k-j+1) \rho_k^2 \frac{2t}{n} - 2\rho_k \frac{t}{n} + 4\rho_k^2 \frac{t}{n} + 2\rho_k\frac{t}{n}f\(\frac{t}{n}\) + \mathcal{O} \( \frac{1}{n^{3/2}} \),
\end{align}
where $\frac{t}{n}f\(\frac{t}{n}\)=\sqrt{R_{j-1,k}}-\sqrt{R_{j-1,k}\( \rho_k(1+\frac{s}{\sqrt{n}}),1+\frac{s}{\sqrt{n}}\)}$, where $f\(\frac{t}{n}\)$ is analytic around 0.

Therefore, the proof of $(i)$ is finished.

Proceeding equivalently for $R_{j+1,k}$ results in
\begin{align*}
R_{j+1,k} &= \frac{1}{\sqrt{n}} \Big( -4(k-j)(2\rho_k\rho_k's+\rho_k^2s)-2\rho_k's \Big) + 2\rho_k\sqrt{R_{j,k}} + O \( \frac{1}{n} \).
\end{align*}
Inserting Equation (\ref{equ:rjkasympfertig}) for $R_{j,k}$ we proved the second statement of the lemma.

Going one step further leads to
\[ R_{j+2,k} = \hat{C}_{j+2} + \frac{1}{\sqrt[4]{n}} p_{j+2}(s,t) + O \( \frac{1}{\sqrt{n}} \), \]
with $\hat{C}_{j+2}:=4\rho_k^2$ and $p_{j+2}(s,t):= 2\rho_k\sqrt{p_{j+1}(s,t)}$, where $p_{j+1}(s,t)$ is defined as in Lemma \ref{lem:exprjrj+1rho1}.

Now we proceed by induction.
Therefore we assume that $R_{i,k} = \hat{C}_{i} + \frac{1}{\sqrt[4]{n}} p_{i}(s,t) + O \( \frac{1}{\sqrt{n}} \)$ with $i \geq j+2$.
Thus, we get
\begin{align*}
\begin{aligned}
R_{i+1,k} &= 1-4(k-i) \( \rho_k^2 + 2\rho_k \rho_k' \frac{s}{\sqrt{n}} + (\rho_k'^2+2\rho_k\rho_k'') \frac{s^2}{n} + \rho_k^2 \frac{s}{\sqrt{n}} + 2\rho_k \rho_k' \frac{s^2}{n} + \rho_k^2 \frac{2t}{n} \) \\
&-2\( \rho_k + \rho_k' \frac{s}{\sqrt{n}} + \rho_k'' \frac{s^2}{2n} + \rho_k \frac{t}{n} \) \( 1- \sqrt{R_{i,k}}\) + \mathcal{O} \( \frac{1}{n^{3/2}} \). 
\end{aligned}
\end{align*}
Inserting the induction hypothesis and simplifying yields
\[ R_{i+1,k} = 4(i-j)\rho_k^2+2\rho_k \sqrt{\hat{C}_i} + \frac{1}{\sqrt[4]{n}} \frac{\rho_kp_i(s,t)}{\sqrt{\hat{C}_i}} + \mathcal{O} \( \frac{1}{\sqrt{n}} \). \]
Setting $\hat{C}_{i+1}:= 4(i-j)\rho_k^2+2\rho_k \sqrt{\hat{C}_i}$ and $p_{i+1}(s,t):=\frac{\rho_k}{\sqrt{\hat{C}_i}} p_i(s,t)$ completes the proof.
\end{proof}

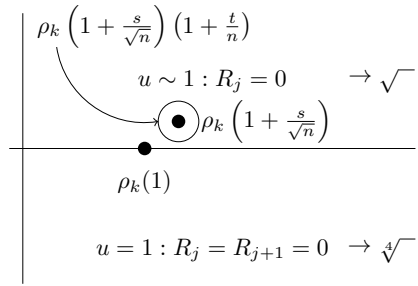
\begin{figure}[h!]
\scalebox{0.9}{\begin{tikzpicture}
	\draw (0,2) -- (6,2);
	\draw (0.2,0) -- (0.2,4);
	\fill (2,2) circle (0.1);
	\node (k) at (2,1.5) {$\rho_k(1)$};
	\node (k) at (3,0.5) {$u=1: R_j=R_{j+1}=0$};
	\fill (2.5,2.4) circle (0.1);
	\node (k) at (3,3) {$u \sim 1: R_j=0$};
	\node (k) at (5.5,0.5) {$\rightarrow \sqrt[4]{ \ \ }$};
	\node (k) at (5.5,3) {$\rightarrow \sqrt{ \ \ }$};
	\node (k) at (3.8,2.4) {$\rho_k \(1+\frac{s}{\sqrt{n}}\)$};
	\draw (2.5,2.4) circle (0.3);
	\node (k) at (2,3.8) {$\rho_k \( 1+ \frac{s}{\sqrt{n}} \) \( 1+ \frac{t}{n} \)$};
	\draw [->, bend angle=45, bend right]  (0.7,3.5) to (2.2,2.4);
	\end{tikzpicture}}
\caption{Sketch of the idea of the proof.}
\label{fig:proofidea}
\end{figure}

\begin{prop}
\label{prop:znhkzutotal}
Let $H_k(z,u)$ be the bivariate generating function of the class of closed lambda-terms with at most $k$ De Bruijn levels. Then the $n$-th coefficient of $H_k(z,1+\frac{s}{\sqrt{n}})$ with bounded $s \in \mathbb{C}$ is given by
\begin{align*}
[z^n] H_{k}(z,1+\frac{s}{\sqrt{n}}) = C_k(s) \rho_k^{-n} n^{-\frac{5}{4}} \( 1+ O \( n^{-\frac{3}{4}} \) \), \ \ \ \text{as} \ n \longrightarrow \infty,
\end{align*}
with a constant $C_k(s) \neq 0$.
\end{prop}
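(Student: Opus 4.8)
The plan is to read off the coefficient by a direct Hankel-contour computation that treats the two scales $s/\sqrt n$ (the displacement of $u$ from $1$) and $t/n$ (the displacement of $z$ from the \emph{moving} singularity $\rho_k(u)$) simultaneously, exactly in the variables prepared by Lemma~\ref{lem:exprjrj+1rho1}. Starting from Cauchy's formula
\[
[z^n]H_k(z,u)=\frac{1}{2\pi i}\oint \frac{H_k(z,u)}{z^{n+1}}\,\mathrm{d}z,\qquad u=1+\frac{s}{\sqrt n},
\]
I would deform the path to a Hankel contour $\mathcal H$ hugging the branch cut issuing from the dominant singularity $\rho_k(u)$, which by Lemma~\ref{lem:domsingvonjradicand} comes from the $j$-th radicand. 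Parametrising the relevant part by $z=\rho_k(u)\left(1+\frac{t}{n}\right)$, one has $z^{-n-1}=\rho_k(u)^{-n-1}e^{-t}\left(1+O(1/n)\right)$ and $\mathrm{d}z=\rho_k(u)\,\mathrm{d}t/n$, so the scale $t/n$ is precisely the one turning the coefficient integral into a convergent contour integral in $t$.

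Next I would insert the singular expansion of $H_k=\frac{1-\sqrt{R_{k+1,k}}}{2z}$. Since the singularity never comes from the outermost radical, $k+1\ge j+2$, and Lemma~\ref{lem:exprjrj+1rho1}(iii) gives $R_{k+1,k}=\hat C_{k+1}+n^{-1/4}p_{k+1}(s,t)+O(n^{-1/2})$, whence
\[
\sqrt{R_{k+1,k}}=\sqrt{\hat C_{k+1}}+\frac{p_{k+1}(s,t)}{2\sqrt{\hat C_{k+1}}}\,n^{-1/4}+O\!\left(n^{-1/2}\right).
\]
The term $\frac{1-\sqrt{\hat C_{k+1}}}{2z}$ is analytic at $\rho_k(u)$ and contributes nothing to the leading coefficient asymptotics, so the whole contribution comes from the $n^{-1/4}$ term. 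Collecting the powers of $n$, namely $n^{-1/4}$ from the singular part and $n^{-1}$ from $\mathrm{d}z$, yields the announced order $\rho_k(u)^{-n}n^{-5/4}$, with the remaining contour integral
\[
-\frac{1}{4\rho_k\sqrt{\hat C_{k+1}}}\cdot\frac{1}{2\pi i}\int_{\mathcal H}p_{k+1}(s,t)\,e^{-t}\,\mathrm{d}t .
\]
Because $p_{k+1}(s,t)$ inherits from the nested radicals the growth $p_{k+1}(s,t)=O\!\left(|t|^{1/4}\right)$ as $|t|\to\infty$ (one square root applied to $p_{j+1}\sim\sqrt{p_j}\sim\sqrt t$), the factor $e^{-t}$ makes this Hankel integral converge and expresses it through reciprocal-Gamma values of the type $1/\Gamma(-1/4)$.

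It then remains to pass from $\rho_k(u)^{-n}$ to $\rho_k^{-n}=\rho_k(1)^{-n}$. Writing $\rho_k(u)^{-n}=\rho_k^{-n}B(u)^n$ with $B(u)=\rho_k(1)/\rho_k(u)$ and expanding $n\log B\!\left(1+\frac{s}{\sqrt n}\right)=B'(1)\,s\sqrt n+O(1)$, the resulting smooth factor $B(u)^n$ is absorbed into $C_k(s)$; this is exactly the factor that will later produce the mean $\mu=B'(1)$ and the Gaussian width in the limit law. Non-vanishing of $C_k(s)$ follows by tracing the non-zero leading coefficients through the recursions for $p_i$ and $\hat C_i$ (the $\hat C_i$ being positive is immediate from their defining recursion, and $\rho_k\neq0$), so that the dominant $|t|^{1/4}$ part of the Hankel integral is a non-zero multiple of $1/\Gamma(-1/4)$.

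The main obstacle is the \emph{uniform} control of all of this along the entire contour rather than merely as $t\to0$. Lemma~\ref{lem:exprjrj+1rho1} is stated for bounded $s,t$, so I would still need to verify that $H_k(z,u)$ continues analytically to a slit neighbourhood (a $\Delta$-domain) of $\rho_k(u)$ with $\rho_k(u)$ its only singularity on the circle of convergence, so that the deformation is legitimate; to extend the expansions uniformly over the arms of $\mathcal H$ and show that the tails, where $|t|$ is large and the approximation $\left(1+\frac{t}{n}\right)^{-n}\approx e^{-t}$ degrades, contribute only lower-order terms; and to keep all error terms uniform in $s$ on bounded sets. This is the delicate bookkeeping of singularity analysis adapted to a \emph{coalescing} pair of singularities, in the spirit of \cite{MR2483235} and of the corresponding argument in \cite{bodini2015number}: the genuine difficulty is that the $\frac14$-type singularity is created by the confluence of $R_{j,k}$ and $R_{j+1,k}$, so the two scales $s/\sqrt n$ and $t/n$ must be balanced at once to see the $n^{-1/4}$ effect responsible for the $n^{-5/4}$ polynomial factor.
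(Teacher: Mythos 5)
Your proposal follows essentially the same route as the paper: Cauchy's formula, a Hankel-type contour around the moving singularity $\rho_k(u)$ with the substitution $z=\rho_k(u)\left(1+\frac{t}{n}\right)$ at $u=1+\frac{s}{\sqrt{n}}$, insertion of the expansions from Lemma~\ref{lem:exprjrj+1rho1}, cancellation of the analytic part, extraction of the $n^{-1/4}$ term from $p_{k+1}(s,t)$ to produce the $n^{-5/4}$ factor, and absorption of $\left(\rho_k(1)/\rho_k(u)\right)^n$ into $C_k(s)$. The uniformity and tail issues you flag are handled in the paper exactly as you anticipate, by splitting the contour into a Hankel part of width $1/n$ and length $\log^2(n)/n$ and bounding the remainder by $O\left(e^{-K\log^2 n}\right)$.
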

\begin{proof}
Let us remember that $H_{k}(z,1+\frac{s}{\sqrt{n}}) = \frac{1- \sqrt{R_{k+1,k}(z,1+\frac{s}{\sqrt{n}})}}{2z}$. Thus, with the well-known Cauchy coefficient formula we get
\begin{align*} [z^n]H_{k} \( z, 1+\frac{s}{\sqrt{n}} \) &= \frac{1}{2i\pi} \int_{\gamma} \frac{H_{k} \( z, 1+\frac{s}{\sqrt{n}} \)}{z^{n+1}} dz \\
&= \frac{1}{2i\pi} \int_{\gamma} \frac{1-\sqrt{R_{k+1,k} \( z, 1+\frac{s}{\sqrt{n}}\)} }{2z^{n+2}} dz,
\end{align*}
where $\gamma$ encircles the dominant singularity $\rho_k(u)$ as depicted in Figure \ref{fig:integrationcontours}. We denote the small Hankel-like part of the integration contour $\gamma$ that contributes the main part of the asymptotics by $\gamma_H$ (\textit{cf}. Figure \ref{fig:integrationcontours}). The curve $\gamma_H$ encircles $\rho_k(u)$ at a distance $\frac{1}{n}$ and its straight parts (that lead into the direction $\rho_k(u) \cdot \infty$) have the length $\frac{\log^2(n)}{n}$. On $\gamma \setminus \gamma_H$ we have $|z|=|\rho_k(u)| \Big|1+ \frac{\log^2(n)}{n} + \frac{i}{n} \Big|$. Thus, using the transformation $z=\rho(u)\( 1+ \frac{t}{n} \)$, which changes $\gamma_H$ to $\tilde{\gamma}_H$, and Lemma \ref{lem:exprjrj+1rho1} and estimating the contribution of $\gamma \setminus \gamma_H$ implies that there exisits a $K>0$ such that

\begin{align}
[z^n]H_{k} \( z, 1+\frac{s}{\sqrt{n}} \) &= \frac{1}{2i\pi} \int_{\tilde{\gamma}_H} \frac{1-\sqrt{\hat{C}_{k+1}+\frac{1}{\sqrt[4]{n}}p_{k+1}(s,t)+\mathcal{O} \( \frac{1}{\sqrt{n}} \)}}{2\rho^{n+1}e^tn} dt + \mathcal{O} \( e^{-K\log^2(n)} \)\\
\label{equ:znhkmitintegral}
&= \frac{1}{2i\pi} \int_{\tilde{\gamma}_H} \frac{1-\sqrt{\hat{C}_{k+1}} - \frac{1}{2\sqrt[4]{n}\sqrt{\hat{C}_{k+1}}}p_{k+1}(s,t)+\mathcal{O} \( \frac{1}{\sqrt{n}} \)}{2\rho_k^{n+1}e^tn} dt+\mathcal{O} \( e^{-K\log^2(n)} \).
\end{align}

\begin{figure}[h!]
  \centering
  \scalebox{0.6}{
	\begin{tikzpicture}
	\draw [domain=57:394] plot ({2+2.5*cos(\x)}, {3+2.5*sin(\x)});
	\fill (3,4) circle (0.1);
	\draw (0,3) -- (6,3);
	\draw (2,2) -- (2,5);
	\draw (2.7,4.4) -- (3.4,5.1);
	\draw (3.35,3.7) -- (4.05,4.4);
	\draw (2.7,4.4) to[bend right=50] (2.7,3.7);
	\draw (3.35,3.7) to[bend left=50] (2.7,3.7);
	\coordinate[label=-45:$\gamma_H$] (z) at (3.5,4);
	\coordinate[label=-45:$\gamma$] (z) at (-0.2,5.3);
    	\coordinate[label=-45:$\rho_k(u)$] (z) at (2.8,4.7);
	\end{tikzpicture}}
	\hspace{1.1cm}
  \scalebox{0.8}{
	\begin{tikzpicture}
	\fill (2,3) circle (0.1);
	\draw (0,3) -- (6,3);
	\draw (2,2) -- (2,5);
	\draw (2,3.5) -- (6,3.5);
	\draw (2,2.5) -- (6,2.5);
	\draw (2,3.5) to[bend right=40] (1.5,3);
	\draw (2,2.5) to[bend left=40] (1.5,3);
    	\coordinate[label=-45:$\mathcal{H}$] (z) at (3.5,4);
    	\end{tikzpicture}}
	\caption{The integral contours $\gamma$ and $\mathcal{H}$.}
\label{fig:integrationcontours}
\end{figure}
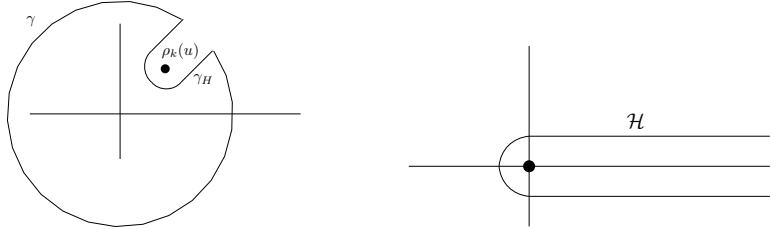

Now, let us observe how the function $p_{k+1}(s,t)$ looks like by using the recursive definition $p_{i+1}(s,t)= \frac{\rho_k}{\sqrt{\hat{C}_i}} p_i(s,t)$ and $p_{j+2}(s,t)=2\rho_k \sqrt{2\rho_k\sqrt{p_j(t)}+q(s)}$, with a polynomial $q(s)=s\(-4(k-j)(2\rho_k\rho_k'+\rho_k^2)-2\rho_k'\)$ that is linear in $s$. Thus, $p_{k+1}(s,t)=D \cdot p_{j+2}(s,t)$ with a constant $D$.
Inserting this into (\ref{equ:znhkmitintegral}) and splitting the integral yields
\begin{align*}
&[z^n]H_{k} \( z, 1+\frac{s}{\sqrt{n}} \) = \\
&\frac{\rho_k^{-n}}{4i\pi\rho_kn} \( \int_{\tilde{\gamma}_H} \(1-\sqrt{\hat{C}_{k+1}}\)e^{-t}dt - \int_{\tilde{\gamma}_H} \frac{De^{-t}}{2\sqrt[4]{n}\sqrt{\hat{C}_{k+1}}} \sqrt{2\rho_k\sqrt{p_j(t)}+q(s)}dt + \int_{\tilde{\gamma}_H} \mathcal{O} \( \frac{1}{\sqrt{n}} \) e^{-t}dt   \).
\end{align*}

The first integral is zero and the third integral contributes $\mathcal{O} \( \frac{1}{\sqrt{n}} \)$. Thus, the main part of the asymptotics results from the second integral: There are some constants $A(s)$ and $B(s)$ such that
\begin{align*}
- \int_{\tilde{\gamma}_H} \frac{De^{-t}}{\sqrt[4]{n}\sqrt{\hat{C}_{k+1}}}\sqrt{2\rho_k\sqrt{p_j(t)}+q(s)}dt &= 
- \int_{\tilde{\gamma}_H} \frac{De^{-t}}{\sqrt[4]{n}\sqrt{\hat{C}_{k+1}}}\sqrt{A(s)t+B(s)+\mathcal{O}\(\frac{\log^4(n)}{n}\)}dt\\
&= - \int_{\tilde{\gamma}_H} \frac{De^{-t}}{\sqrt[4]{n}\sqrt{\hat{C}_{k+1}}}\sqrt{A(s)t+B(s)}dt + \mathcal{O} \( \frac{\log^6(n)}{n} \) \\
&= - \int_{\mathcal{H}} \frac{De^{-t}}{\sqrt[4]{n}\sqrt{\hat{C}_{k+1}}}\sqrt{A(s)t+B(s)}dt + \mathcal{O} \( e^{-\tilde{K}\log^2(n)} \) \\
&\sim \tilde{C}(s)\frac{1}{\sqrt[4]{n}}.
\end{align*}
Here $\tilde{K}$ denotes a suitable positive constant, and 
$\mathcal{H}$ denotes the classical Hankel curve, \emph{i.e.}, the noose-shaped curve that winds
around 0 and starts and ends at $+\infty$ (\textit{cf}. Figure \ref{fig:integrationcontours}).

Finally, using this result we get
\begin{align*}
[z^n]H_{k} \( z, 1+\frac{s}{\sqrt{n}} \) = C(s) \rho_k \( 1+\frac{s}{\sqrt{n}} \)^{-n} n^{-5/4} \( 1+ \mathcal{O} \( \frac{1}{\sqrt[4]{n}} \) \), \ \ \ \text{for} \ n \longrightarrow \infty,
\end{align*}
with a constant $C(s)$ that depends on $s$.
\end{proof}

Now we show that the characteristic function of our standardized sequence of random variables tends to the characteristic function of the normal distribution.
\begin{lemma}
Let $X_n$ be the total number of variables in a random lambda-term with at most $k$ De Bruijn levels. Set $\sigma^2 :=2 \cdot \( \frac{\rho_k'(1)}{\rho_k(1)}-\frac{\rho_k''(1)}{\rho_k(1)}+\frac{\rho_k'(1)^2}{\rho_k(1)^2} \)$.  If $\sigma^2 \neq 0$, then 
\[ Z_n = \frac{X_n - \mathbb{E}X_n}{\sqrt{n}} \longrightarrow \mathcal{N} \( 0, \sigma^2 \). \]
\end{lemma}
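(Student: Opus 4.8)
The plan is to prove the distributional convergence directly on the level of characteristic functions, since in the regime $k=N_j$ the amplitude of the coefficient asymptotics of Proposition~\ref{prop:znhkzutotal} genuinely depends on the deformation parameter, so that the ordinary Quasi-Power Theorem used in Subsection~\ref{subsec:total_height_njnj+1} is \emph{not} applicable. By L\'evy's continuity theorem it suffices to show $\mathbb{E}e^{itZ_n}\to e^{-\sigma^2t^2/2}$ for every fixed real $t$. First I would write $\mathbb{E}u^{X_n}=[z^n]H_k(z,u)/[z^n]H_k(z,1)$ and set $u=e^{it/\sqrt n}$, so that the scaling parameter $s:=\sqrt n\,(u-1)=it+O(n^{-1/2})$ falls into the window $u=1+s/\sqrt n$ covered by Proposition~\ref{prop:znhkzutotal}. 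Applying that proposition (in the sharper form with the $u$-dependent dominant singularity $\rho_k(u)^{-n}$ that appears at the end of its proof) to the numerator at $s$ and to the denominator at $s=0$, the common factor $n^{-5/4}$ cancels, and by analyticity of $C_k$ one may pass $C_k(s)\to C_k(it)$, giving
\[
\mathbb{E}e^{itX_n/\sqrt n}=\frac{C_k(it)}{C_k(0)}\left(\frac{\rho_k(1)}{\rho_k(u)}\right)^{\!n}\bigl(1+o(1)\bigr).
\]

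The second step is to expand the singularity ratio. A second-order Taylor expansion of $\log\rho_k$ at $u=1$ together with $u-1=it/\sqrt n-t^2/(2n)+O(n^{-3/2})$ yields
\[
-n\log\frac{\rho_k(u)}{\rho_k(1)}=i\mu t\sqrt n+\frac{t^2}{2}\Bigl(\frac{\rho_k'(1)}{\rho_k(1)}+\frac{\rho_k''(1)}{\rho_k(1)}-\frac{\rho_k'(1)^2}{\rho_k(1)^2}\Bigr)+o(1),
\]
where $\mu=-\rho_k'(1)/\rho_k(1)$. Hence $\log\mathbb{E}e^{itX_n/\sqrt n}$ consists of a divergent imaginary term $i\mu t\sqrt n$, the real quadratic term above, and the amplitude contribution $\log\bigl(C_k(it)/C_k(0)\bigr)$. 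The centering factor $e^{-it\mathbb{E}X_n/\sqrt n}$ cancels all purely imaginary (odd in $t$) pieces: it removes $i\mu t\sqrt n$ as well as the imaginary $O(1)$ term coming from the linear part of $\log C_k$. Consistency therefore requires the mean to expand as $\mathbb{E}X_n=\mu n+\nu\sqrt n+o(\sqrt n)$, which one can confirm by the same coefficient analysis applied to $\partial_u H_k(z,u)|_{u=1}$ (equivalently, it is forced since $\mathbb{E}Z_n=0$). What then survives in $\log\mathbb{E}e^{itZ_n}$ is the sum of two real quadratic forms in $t$.

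The crucial and genuinely non-standard step --- and the main obstacle --- is the evaluation of the amplitude ratio $C_k(it)/C_k(0)$, which here does \emph{not} tend to $1$ and therefore feeds into the variance. I would use the explicit representation of $C_k(s)$ from the proof of Proposition~\ref{prop:znhkzutotal}: up to fixed constants it is the Hankel integral $\int_{\mathcal H}e^{-t}\sqrt{A(s)t+B(s)}\dt$, with $A(s),B(s)$ built from $p_j,p_{j+1}$ as in Lemma~\ref{lem:exprjrj+1rho1}. The shift $t\mapsto t-B(s)/A(s)$ reduces this to $e^{B(s)/A(s)}\sqrt{A(s)}$ times a fixed Gamma-type constant, so that $\log\bigl(C_k(s)/C_k(0)\bigr)$ becomes an explicit quadratic in $s$. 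Substituting $s=it$ and adding its real quadratic part to $\tfrac{t^2}{2}\bigl(\rho_k'/\rho_k+\rho_k''/\rho_k-(\rho_k')^2/\rho_k^2\bigr)$ from the singularity ratio, I expect the algebra to collapse to exactly $-\tfrac{\sigma^2}{2}t^2$ with $\sigma^2=2\bigl(\rho_k'(1)/\rho_k(1)-\rho_k''(1)/\rho_k(1)+\rho_k'(1)^2/\rho_k(1)^2\bigr)$; it is precisely this amplitude term that makes the variance differ from the value $B''(1)+B'(1)-B'(1)^2$ of the case $N_j<k<N_{j+1}$.

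With $\mathbb{E}e^{itZ_n}\to e^{-\sigma^2t^2/2}$ for all real $t$ under the hypothesis $\sigma^2\neq0$, L\'evy's continuity theorem gives $Z_n\to\mathcal N(0,\sigma^2)$. The delicate points I anticipate are: ensuring the $O(n^{-3/4})$ error in Proposition~\ref{prop:znhkzutotal} is uniform as $s$ ranges over a complex neighbourhood of $it$, so that $C_k(s)\to C_k(it)$ is legitimate; justifying the contour deformation $\tilde\gamma_H\to\mathcal H$ for the complex value $u=e^{it/\sqrt n}$ rather than real $u$; and the quadratic-form bookkeeping of the third step, where the amplitude $C_k$ atypically contributes to the variance. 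The third step is where the real work lies.
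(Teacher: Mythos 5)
Your first two steps reproduce the paper's proof: the paper likewise writes $\phi_{Z_n}(s)=e^{-is\mu\sqrt n}\,[z^n]H_k(z,e^{is/\sqrt n})/[z^n]H_k(z,1)$, invokes Proposition~\ref{prop:znhkzutotal} in the sharper form with the factor $\rho_k(1+s/\sqrt n)^{-n}$ from the end of its proof, extracts $\sigma^2$ from exactly the second-order Taylor expansion of $\rho_k$ at $u=1$ that you describe, and concludes by convergence of characteristic functions. You part ways at the amplitude: the paper simply asserts ``$C(s)\sim 1$'' and reads the whole variance off the singularity ratio, whereas you claim $C_k(it)/C_k(0)$ survives in the limit and feeds into the variance. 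You are right to flag this as the delicate point --- $C_k$ depends on the rescaled variable $s$, which stays fixed as $n\to\infty$, so the usual quasi-power reflex does not dispose of it.

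The trouble is that your step~3, which you yourself call the crux, is a genuine gap rather than an argument, and on inspection it cannot deliver what you want. If one takes the representation $\int_{\mathcal H}e^{-t}\sqrt{A(s)t+B(s)}\,\mathrm{d}t$ at face value, then by Lemma~\ref{lem:exprjrj+1rho1} the $s$-dependence enters only through the additive term $q(s)$, which is \emph{linear} in $s$; after your shift $t\mapsto t-B(s)/A(s)$ the logarithm of the amplitude ratio is therefore linear in $s$, hence purely imaginary at $s=it$ and absorbed entirely by the centering $e^{-it\mathbb{E}X_n/\sqrt n}$ --- it contributes a $\Theta(\sqrt n)$ correction to the mean but nothing to the variance, contradicting your premise. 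If instead one keeps the integrand in its true form $\sqrt{2\rho_k\sqrt{p_j(t)}+q(s)}$ (the nested quarter-power is precisely what produces the exponent $n^{-5/4}$; a genuine $\sqrt{A(s)t+B(s)}$ would give $n^{-3/2}$), then no translation of $t$ removes $q(s)$, $\log C_k(s)$ is not a polynomial in $s$, and your ``quadratic-form bookkeeping'' has no reason to close --- indeed a nontrivial quadratic contribution from $C_k$ would make the variance depend on data beyond $\rho_k'(1)$ and $\rho_k''(1)$, which is inconsistent with the stated $\sigma^2$. What the proof actually requires at this point is a demonstration that $\log\bigl(C_k(it)/C_k(0)\bigr)$ is (asymptotically) purely imaginary and linear in $t$, so that the Gaussian factor comes from the singularity ratio alone; that is the content hiding behind the paper's one-line assertion $C(s)\sim 1$, and it is the piece missing from your proposal.
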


\begin{proof}
For the standardised sequence of random variables $Z_n$ we have with $\mu :=\frac{\mathbb{E}X_n}{n}$
\[ Z_n = \frac{X_n - \mathbb{E}X_n}{\sqrt{n}} = \frac{X_n}{\sqrt{n}}-\mu \sqrt{n}. \]
Its characteristic function reads as
\begin{align} \phi_{Z_n}(s) &= \mathbb{E}(e^{isZ_n}) = e^{-is\mu \sqrt{n}} \phi_{X_n}\( \frac{s}{\sqrt{n}} \) =  e^{-is\mu \sqrt{n}} \mathbb{E}(e^{\frac{isX_n}{\sqrt{n}}}) = \\
&= e^{-is\mu \sqrt{n}} \frac{[z^n] H_k(z,e^{\frac{is}{\sqrt{n}}})}{[z^n]H_k(z,1)}.
\end{align}
From Proposition \ref{prop:znhkzutotal} we know 
\begin{align*}
\frac{[z^n] H_k(z,1+\frac{s}{\sqrt{n}})}{[z^n]H_k(z,1)} \sim C(s) \( \frac{\rho_k(1+\frac{s}{\sqrt{n}})}{\rho_k(1)} \) ^{-n},
\end{align*}
where the constant $C(s) \sim 1$ for $n \longrightarrow \infty$.

Thus,
\begin{align*}
\phi_{Z_n}(s) &= e^{-is\mu \sqrt{n}} \frac{[z^n] H_k(z,e^{\frac{is}{\sqrt{n}}})}{[z^n]H_k(z,1)} \sim e^{-is\mu \sqrt{n}} \( \frac{\rho_k \(1+ \frac{si}{\sqrt{n}} - \frac{s^2}{n} + \mathcal{O}\( \frac{|s^3|}{n^{3/2}} \) \)}{\rho_k(1)} \) ^{-n} =\\
&= e^{-is\mu \sqrt{n}} \exp \( -n \cdot \( \log\(1+ \frac{\rho_k'is}{\rho_k\sqrt{n}} - \frac{s^2}{n} \frac{\rho_k'}{\rho_k}- - \frac{s^2}{n} \frac{\rho_k''}{\rho_k} \) + \mathcal{O}\( \frac{|s^3|}{n^{3/2}} \) \) \) \\
&\sim e^{-is\mu \sqrt{n}} e^{-is\sqrt{n} \frac{\rho_k'}{\rho_k}} e^{s^2 \( -\frac{\rho_k'}{\rho_k}+\frac{\rho_k''}{\rho_k}-\frac{\rho_k'^2}{\rho_k^2} \)}.
\end{align*}
Since we know that the expected value of the standardised random variable is zero, we get $\mu=-\frac{\rho_k'(1)}{\rho_k(1)}+o \( \frac{1}{\sqrt{n}}\)$,
and thus
\begin{align*}
\phi_{Z_n}(s) \sim e^{- \frac{s^2 \sigma^2}{2}},
\end{align*}
with $\sigma^2=2 \cdot \( \frac{\rho_k'(1)}{\rho_k(1)}-\frac{\rho_k''(1)}{\rho_k(1)}+\frac{\rho_k'(1)^2}{\rho_k(1)^2} \)$,
which completes the proof.
\end{proof}

Thus, we get that the total number of leaves in lambda-terms with a bounded number of De Bruijn levels is asymptotically normally distributed.

\section{Unary profile of lambda-terms with bounded number of De Bruijn levels} 
\label{ch:levelheight}

\subsection{Leaves}
\label{sec:leaveslevels}

The aim of this section is the investigation of the distribution of the number of leaves in the different De Bruijn levels in closed lambda-terms with bounded number of De Bruijn levels. In order to do so, let us consider that each De Bruijn level in such a lambda-term corresponds to one or more binary trees that contain different types of leaves, where the number of types corresponds to the respective level (\textit{cf.} Figure \ref{fig:unarylevels}), \textit{i.e.}, in the $i$-th De Bruijn level there may be $i$ different types of leaves.
Let $\mathcal{C}$ be the class of binary trees. Using the notation from the previous sections we can specify this class by
\[ \mathcal{C}= \mathcal{Z}  + ( \mathcal{A} \times \mathcal{C} \times \mathcal{C}). 
\]

Translating into bivariate generating functions $C(z,u)$ with $z$ marking the size (\textit{i.e.}, the total number of nodes) and $u$ marking the number of leaves, yields $ C(z,u)=\frac{1- \sqrt{1-4uz^2}}{2z}$.

Let $_{k-l}\tilde{H}_{k}(z,u)$ be the generating function of closed lambda-terms with at most $k$ De Bruijn levels, where $z$ marks the size and $u$ marks the number of leaves on the $(k-l)$-th unary level ($0 \leq l \leq k$).
Then we have
\[ _{k-l}\tilde{H}_{k}(z,u) = C(z,C(z,1+ \ldots +C(z,(k-l) \cdot u+ \ldots + C(z, (k-1) +C(z,k))) \ldots) \ldots)), \]
which can be written as
\[ _{k-l}\tilde{H}_{k}(z,u) = \frac{1-\sqrt{\tilde{R}_{k+1,k}(z,u)}}{2z}, \]
with 
\[ \tilde{R}_{1,k}(z,u)=1-4z^2k, \]
\[\tilde{R}_{i,k}(z,u)=1-4z^2(k-i+1)-2z+2z\sqrt{\tilde{R}_{i-1,k}(z,u)}, \ \ \ \text{for} \ 2 \leq i \leq k+1, i \neq l+1, \]
and
\[\tilde{R}_{l+1,k}(z,u)=1-4z^2u(k-l)-2z+2z\sqrt{\tilde{R}_{l-1,k}(z,u)}. \]

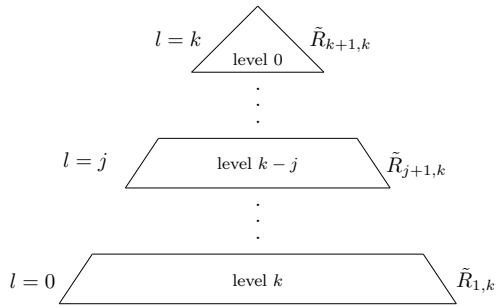
\begin{figure}[h!]
  \centering
\scalebox{0.8}{\begin{tikzpicture}[scale=0.55] 
	\draw (6,9) -- (4,7);
	\draw (6,9) -- (8,7);
	\draw (4,7) -- (8,7);
	\coordinate[label=0: .] (z) at (5.7,6.5);
	\coordinate[label=0: .] (z) at (5.7,6);
	\coordinate[label=0: .] (z) at (5.7,5.5);
	\draw (3,5) -- (9,5);
	\draw (9,5) -- (10,3.5);
	\draw (2,3.5) -- (10,3.5);
	\draw (3,5) -- (2,3.5);
	\coordinate[label=0: .] (z) at (5.7,3);
	\coordinate[label=0: .] (z) at (5.7,2.5);
	\coordinate[label=0: .] (z) at (5.7,2);
	\draw (0,0) -- (12,0);
	\draw (0,0) -- (1,1.5);
	\draw (1,1.5) -- (11,1.5);
	\draw (11,1.5) -- (12,0);
	\node at (3.6,8) {$l=k$};
	\node at (6,7.4) {\footnotesize{level $0$}};
	\node at (6,4.2) {\footnotesize{level $k-j$}};
	\node at (0.8,4.3) {$l=j$};
	\node at (6,0.7) {\footnotesize{level $k$}};
	\node at (-0.8,0.7) {$l=0$};
	\node at (12.6,0.7) {$\tilde{R}_{1,k}$};
	\node at (10.8,4.2) {$\tilde{R}_{j+1,k}$};
	\node at (8.5,8) {$\tilde{R}_{k+1,k}$};
	\end{tikzpicture}}
\caption{A schematic sketch of a lambda-term with at most $k$ De Bruijn levels that exemplifies the notation that is used within this section: If we investigate the number of leaves in the $(k-l)$-th De Bruijn level, for $0 \leq l \leq k$, a factor $u$ is inserted in the recursive definition of the $(l+1)$-th radicand.}
\label{fig:explamationunarylevelsnotation}
\end{figure}

\begin{rem}
Note that the radicands $\tilde{R}_{i,k}$ that are introduced above are very similar to the radicands $R_{i,k}$ that were used in the previous section. The only difference is that now we have a $u$ only in the $(l+1)$-th radicand, while in the previous case $u$ was occurring in all radicands. Thus, from now on we will have further distinctions of cases now depending on the relative position (w.r.t. $l$) of the radicand(s) where the dominant sigularity comes from.
\end{rem}

This chapter consists of two sections. In the first part we will derive the mean values for the number of leaves in the different De Bruijn levels and the second part deals with the distributions of the number of leaves in these levels.

\subsubsection{Mean values}

Now we want to determine the mean for the number of leaves in the different De Bruijn levels, \textit{\textit{i.e.}}
\[ \mathbb{E}X_n= \frac{[z^n] \left( \frac{\partial}{\partial u} \ _{k-l}\tilde{H}_{k}(z,u) \right) |_{u=1}}{[z^n]_{k-l}\tilde{H}_{k}(z,1)}, \]
where $X_n$ denotes the number of leaves in the $(k-l)$-th De Bruijn level of a random closed lambda-term of size $n$ with at most $k$ De Bruijn levels.

In order to do so, we make the following considerations:
\begin{itemize}
\item $\frac{\partial}{\partial u} \tilde{R}_{i,k}(z,u)=0 \ \ \ \ \forall i < l+1$\\

\item$ \frac{\partial}{\partial u} \tilde{R}_{l+1,k}(z,u)= -4z^2(k-l)$ \\

\item $\frac{\partial}{\partial u} \tilde{R}_{i,k}(z,u)= z \cdot \frac{ \frac{ \partial}{ \partial u} \tilde{R}_{i-1,k}(z,u)}{\sqrt{\tilde{R}_{i-1,k}(z,u)}}\ \ \ \ \ \forall i > l+1$
\end{itemize}

Therefore we get
\begin{align}
\label{equ:ablkminuslhleqk}
\left( \frac{\partial}{\partial u} \ _{k-l}\tilde{H}_{k}(z,u) \right) \bigg|_{u=1} = z^{k-l+1} (k-l) \prod_{i=l+1}^{k+1} \frac{1}{\sqrt{\tilde{R}_{i,k}(z,1)}}. 
\end{align}

Again we perform a distinction of cases starting with $k$ not being an element of the sequence $(N_j)_{j \in \mathbb{N}}$.

\medskip
\paragraph{\textbf{The case: $N_j < k < N_{j+1}$}}

Let $\tilde{\rho}_{k}(u)$ be the dominant singularity of $_{k-l}\tilde{H}_{k}(z,u)$, which we know comes from the $(j+1)$-th radicand $\tilde{R}_{j+1,k}(z,u)$. Obviously, $\tilde{\rho}_{k}(1)=\rho_{k}(1)$. Therefore we will again use the abbreviation $\rho_k:=\tilde{\rho}_k(1)$. 

From Proposition \ref{prop:exprjkaroundsing} we get the following expansions of the radicands for $u=1$ and $\epsilon \longrightarrow 0$ so that $\epsilon \in \mathbb{C} \setminus \mathbb{R}^{-}$:

\begin{itemize}
 	\item $\forall i < j+1 \ \text{(inner radicands)}: \tilde{R}_{i,k}(\rho_k-\epsilon,1) = \tilde{R}_{i,k}(\rho_k,1)+ \mathcal{O}(|\epsilon|)$,\\
 	
 	\item $\tilde{R}_{j+1,k}(\rho_k-\epsilon,1) =  \tilde{\gamma}_{j+1} \epsilon +\mathcal{O}(|\epsilon|^2)$, 	
 	with $\tilde{\gamma}_{j+1}=-\frac{\partial}{\partial z} \tilde{R}_{j+1,k}(\rho_k,1)$,\\
 	
 	\item $\forall i > j+1 \ \text{(outer radicands)}: \tilde{R}_{i,k}(\rho_k-\epsilon,1) =\tilde{a}_i+\tilde{b}_i\sqrt{\epsilon}+\mathcal{O}(|\epsilon|),$ \\
 	
with $\tilde{a}_{i+1}=1-4(k-i)\rho_k^2-2\rho_k+2\rho_{k}\sqrt{\tilde{a}_i}$, and $\tilde{b}_{i+1}=\frac{\tilde{b}_i \rho_{k}}{\sqrt{\tilde{a}_i}}$ for $j+2 \leq i \leq k$, where $\tilde{a}_{j+2}=1-4(k-j-1)\rho_{k}^2-2\rho_k$ and $\tilde{b}_{j+2}=2\rho_k\sqrt{\tilde{\gamma}_{j+1}}$.
\end{itemize}

Thus, we have

\begin{itemize}
 	\item $\forall i < j+1 \ \text{(inner radicands)}: \frac{1}{\sqrt{\tilde{R}_{i,k}(\rho_k-\epsilon,1)} }= \frac{1}{\sqrt{\tilde{R}_{i,k}(\rho_k,1)}}+ \mathcal{O}(|\epsilon|)$,\\
 	
\item $\frac{1}{\sqrt{\tilde{R}_{j+1,k}(\rho_1-\epsilon,1)}} =
 \frac{1}{\sqrt{\tilde{\gamma}_{j+1}}} 
 \epsilon^{-\frac{1}{2}} +\mathcal{O} (|\epsilon|^{ \frac{1}{2}} )$,\\
 	
\item $\forall i > j+1 \ \text{(outer radicands)}: \frac{1}{\sqrt{\tilde{R}_{i,k}(\rho_k-\epsilon,1)}} 
=\frac{1}{ \sqrt{\tilde{a}_i}}-\frac{\tilde{b}_i}
{2\sqrt{\tilde{a}_i^3}} \epsilon^{ \frac{1}{2}}+\mathcal{O}(|\epsilon|^{ \frac{3}{2}})$.
\end{itemize}

Now we have to perform a distinction of cases whether the De Bruijn level that we are focussing on is below the $(k-j)$-th level or not (\textit{i.e.}, whether $l$ is below $j$ or not).

\smallskip
\paragraph{\underline{First case}: $l>j$}
First let us remember that $l>j$ implies that the $u$ is inserted in a radicand that is located outside the $(j+1)$-th.
From (\ref{equ:ablkminuslhleqk}) we get for $\epsilon \longrightarrow 0$ so that $\epsilon \in \mathbb{C} \setminus \mathbb{R}^{-}$
\[ \left( \frac{\partial}{\partial u} \ _{k-l}\tilde{H}_{k}(\rho_k-\epsilon,u) \right) \bigg|_{u=1} = \]
\[ \rho_k^{k-l+1} (k-l) \( \prod_{i=l+1}^{k+1} \frac{1}{\sqrt{\tilde{a}_i}} -   \sum_{m=l+1}^{k+1} \( \frac{\tilde{b}_m}{2 \sqrt{\tilde{a}_m^3}} \prod_{i=l+1, i \neq m}^{k+1} \frac{1}{\sqrt{\tilde{a}_i}} \) \epsilon^{\frac{1}{2}} + \mathcal{O}(|\epsilon|^{\frac{3}{2}}) \) . \]

By denoting the sum in the equation above with $\tilde{\delta}_l$ we can determine the coefficient of $z^n$ by
\begin{align*}
[z^n] \left( \frac{\partial}{\partial u} \ _{k-l}\tilde{H}_{k}(z,u) \right) \bigg|_{u=1} = 
- \rho_k^{k-l+1} (k-l) \tilde{\delta}_l \left( \frac{1}{\rho_k} \right)^n \frac{n^{-\frac{3}{2}}}{\Gamma(-\frac{1}{2})} \left(1+ \mathcal{O} \( \frac{1}{n} \) \right), \ \ \ \ \ \ \text{as} \ n \longrightarrow \infty,
\end{align*}

and by using the asymptotics of the $n$-th coefficient of  $_{k-l}\tilde{H}_{k}(z,1)=H_{k}(z,1)$ (see Theorem \ref{theo:asymphleqkueg1}) we finally get for the mean asymptotically as $n \longrightarrow \infty$
\[ \frac{[z^n] \left( \frac{\partial}{\partial u} \ _{k-l}\tilde{H}_{k}(z,u) \right) |_{u=1}}{[z^n]_{k-l}\tilde{H}_{k}(z,1)} = \frac{- \rho_k^{k-l+1} (k-l) \tilde{\delta}_l}{h_k} \( 1+ \mathcal{O} \( \frac{1}{n} \) \). \]

Thus, we showed that there is only a small number of leaves in the De Bruijn levels below the $(k-j)$-th level. More precisely, the asymptotic mean of the number of leaves is $O(1)$ for all these lower levels.

\medskip
\paragraph{\underline{Second case}: $l \leq j$}

Similar to the first case we get

\[ \left( \frac{\partial}{\partial u} \ _{k-l}\tilde{H}_{k}(\rho_k-\epsilon,u) \right) \bigg|_{u=1} = \]
\[
 \rho_k^{k-l+1} (k-l) \(  \( \prod_{i=l+1}^{j} \frac{1}{\sqrt{\tilde{R}_{i,k}(\rho_k,1)}} \) \( \prod_{i=j+2}^{k+1} \frac{1}{\sqrt{\tilde{a}_i}} \) \frac{1}{\sqrt{\tilde{\gamma}_{j+1}}}\epsilon^{-\frac{1}{2}} + \ \text{const. term} + \mathcal{O}(|\epsilon|^{\frac{1}{2}}) \).  \]

By setting $\tilde{\phi}_{j+1,l}:= \( \prod_{i=l+1}^{j} \frac{1}{\sqrt{\tilde{R}_{i,k}(\rho_k,1)}} \) \( \prod_{i=j+2}^{k+1} \frac{1}{\sqrt{\tilde{a}_i}} \) \frac{1}{\sqrt{\tilde{\gamma}_{j+1}}}$, we obtain for $n \longrightarrow \infty$

\[ [z^n] \left( \frac{\partial}{\partial u} \ _{k-l}\tilde{H}_{k}(z,u) \right) \bigg|_{u=1} =  \rho_k^{k-l+1} (k-l) \tilde{\phi}_{j+1,l} \( \frac{1}{\rho_k} \)^n \frac{n^{-\frac{1}{2}}}{\Gamma(\frac{1}{2})} \( 1+ \mathcal{O} \( \frac{1}{n} \) \). \]

Thus, we get for the mean asymptotically as $n \longrightarrow \infty$
\[ \frac{[z^n] \left( \frac{\partial}{\partial u} \ _{k-l}\tilde{H}_{k}(z,u) \right) |_{u=1}}{[z^n]_{k-l}\tilde{H}_{k}(z,1)} = \frac{\rho_k^{k-l+1} (k-l) \Gamma(-\frac{1}{2}) \tilde{\phi}_{j+1,l}}{\Gamma(\frac{1}{2})h_k} \cdot n \( 1+ \mathcal{O} \( \frac{1}{n} \) \). \]

Hence, we proved that the asymptotic mean for the number of leaves in the De Bruijn levels above the $(k-j)$-th is $\Theta(n)$. So, altogether we can see that almost all of the leaves are located in the upper $j+1$ De Bruijn levels. 

\medskip
\paragraph{\textbf{The case: $k = N_{j}$}}

Now we will deal with the second case, where the bound $k$ is an element of the sequence $(N_j)_{j \in \mathbb{N}}$.

We start by determining the expansions of the radicands around the dominant singularity $\tilde{\rho}_{k}(u)$ of $_{k-l}\tilde{H}_{k}(z,u)$ for $u=1$ and $\epsilon \longrightarrow 0$ so that $\epsilon \in \mathbb{C} \setminus \mathbb{R}^{-}$ (cf. \cite[Proposition 9]{bodini2015number}):

\medskip
\begin{itemize}
	\item $\forall i < j \ \text{(inner radicands)}: \tilde{R}_{i,k}(\rho_k-\epsilon,1) = \tilde{R}_{i,k}(\rho_k,1)+ \mathcal{O}(|\epsilon|), $\\
 	\item $\tilde{R}_{j,k}(\rho_k-\epsilon,1) =   \tilde{\gamma}_j \epsilon +\mathcal{O}(|\epsilon|^2) \ \ $ with $\tilde{\gamma}_j=-\frac{\partial}{\partial z}\tilde{R}_{j,k}(\rho_k,1)$,\\
 	\item $\tilde{R}_{j+1,k}(\rho_k-\epsilon,1) = 2 \tilde{\rho}_{k} \sqrt{\tilde{\gamma}_j} \epsilon^{\frac{1}{2}} + \mathcal{O}(|\epsilon|)$,\\	
 	\item $\forall i > j+1 \ \text{(outer radicands)}: \tilde{R}_{i,k}(\rho_k-\epsilon,1) = \tilde{a}_i+\tilde{b}_i\epsilon^{\frac{1}{4}}+\mathcal{O}(|\epsilon|),$ \\
 	
with $\tilde{a}_{i+1}=1-4(k-i)\rho_k^2-2\rho_k+2\rho_k\sqrt{\tilde{a}_i}$, and $\tilde{b}_{i+1}=\frac{\tilde{b}_i \rho_k}{\sqrt{\tilde{a}_i}}$ for $j+2 \leq i \leq k$, with $\tilde{a}_{j+2}=1-4(k-j)\rho_k^2-2\rho_k$ and $\tilde{b}_{j+2}=2\rho_k \sqrt{\tilde{\gamma}_{j}}$ .
\end{itemize}

Thus, we get

\begin{itemize}
 	\item $\forall i < j \ \text{(inner radicands)}: \frac{1}{\sqrt{\tilde{R}_{i,k}(\rho_k-\epsilon,1)} }= \frac{1}{\sqrt{\tilde{R}_{i,k}(\rho_k,1)}}+ \mathcal{O}(|\epsilon|)$,\\
 	
\item $\frac{1}{\sqrt{\tilde{R}_{j,k}(\rho_k-\epsilon,1)}} =
 \frac{1}{\sqrt{\tilde{\gamma}_j}} 
 \epsilon^{-\frac{1}{2}} +\mathcal{O} (|\epsilon|^{ \frac{1}{2}} )$,\\
 
 \item $\frac{1}{\sqrt{\tilde{R}_{j+1,k}(\tilde{\rho}_{k}-\epsilon,1)}} =
 \frac{1}{\sqrt{2\rho_k}\sqrt[4]{\tilde{\gamma}_j}} 
 \epsilon^{-\frac{1}{4}} +\mathcal{O} (|\epsilon|^{ \frac{1}{4}} )$,\\
 	
\item $\forall i > j+1 \ \text{(outer radicands)}: \frac{1}{\sqrt{\tilde{R}_{i,k}(\rho_k-\epsilon,1)}} 
=\frac{1}{ \sqrt{\tilde{a}_i}}-\frac{\tilde{b}_i}
{2\sqrt{\tilde{a}_i^3}} \epsilon^{ \frac{1}{4}}+\mathcal{O}(|\epsilon|)$. 
\end{itemize}

\medskip
We proceed analogously to the case where $N_j < k < N_{j+1}$, with the only difference that we have to distinguish between three cases now and since for $u=1$ the $j$-th and the $(j+1)$-th radicand vanish simultaneously, we get a closed formula for the dominant singularity $\rho_k=\frac{1}{1+\sqrt{1+4(k-j)}}$.

\medskip
\paragraph{\underline{First case}: $l>j$}
Let us again remember that $l>j$ implies that the $u$ is inserted in the $p$-th radicand with $p>j+1$.
From (\ref{equ:ablkminuslhleqk}) we get for $\epsilon \in \mathbb{C} \setminus \mathbb{R}^{-}$ with $|\epsilon| \longrightarrow 0$

\[ \left( \frac{\partial}{\partial u} \ _{k-l}\tilde{H}_{k}(\rho_k-\epsilon,u) \right) \bigg|_{u=1} = 
 \rho_k^{k-l+1} (k-l)  \prod_{i=l+1}^{k+1} \( \frac{1}{\sqrt{\tilde{a}_{i}}} -  \frac{\tilde{b}_{i}}{2 \sqrt{\tilde{a}_{i}^3}}  \epsilon^{\frac{1}{4}} + \mathcal{O}(|\epsilon|) \) =  \]
\[ \( \frac{1}{1+\sqrt{1+4(k-j)}} \)^{k-l+1} (k-l) \( \prod_{i=l+1}^{k+1} \frac{1}{\sqrt{\tilde{a}_{i}}} -   \sum_{m=l+1}^{k+1} \( \frac{\tilde{b}_{m}}{2 \sqrt{\tilde{a}_{m}^3}} \prod_{i=l+1, i \neq m}^{k+1} \frac{1}{\sqrt{\tilde{a}_{i}}} \)  \epsilon^{\frac{1}{4}} + \mathcal{O}(|\epsilon|^{\frac{1}{2}}) \) . \]

By setting $\tilde{\delta}_j := \sum_{m=l+1}^{k+1} \( \frac{\tilde{b}_{m}}{2 \sqrt{\tilde{a}_{m}^3}} \prod_{\substack{i=l+1 \\ i \neq m}}^{k+1} \frac{1}{\sqrt{\tilde{a}_{i}}} \)$, extracting the $n$-th coefficient and using the asymptotics of $[z^n]_{k-l}\tilde{H}_{k}(z,1)=[z^n]H_k(z,1)=\frac{-\rho_k^{k-j-1}b_{j+2}n^{-5/4}}{4\rho_k\Gamma(-1/4)\prod_{i=j+2}^{k+1}\sqrt{a_i}}\rho_k^{-n} \( 1+ \mathcal{O} \( \frac{1}{n} \) \)$ we have for $n \longrightarrow \infty$
\[ \frac{[z^n] \left( \frac{\partial}{\partial u} \ _{k-l}\tilde{H}_{k}(z,u) \right) |_{u=1}}{[z^n]_{k-l}\tilde{H}_{k}(z,1)} = \frac{-4\rho_k^{j-l+3} (k-l) \tilde{\delta}_j \prod_{m=j+2}^{k+1} \sqrt{a_m} }{b_{j+2}} \( 1 + \mathcal{O} \( n^{-\frac{1}{4}} \) \).\]

Thus, as in the previous case ($k \in (N_j, N_{j+1})$) the asymptotic mean for the number of leaves in the De Bruijn levels below the $(k-j)$-th level is $O(1)$.

\medskip
Furthermore the constant $D_{k,l}:=\frac{-4\rho_k^{j-l+3}(1) (k-l) \tilde{\delta}_j \prod_{m=j+2}^{k+1} \sqrt{a_m} }{b_{j+2}}$ can be simplified to
\begin{align}
\label{equ:Dlk}
D_{k,l}=\frac{k-l}{2\lambda_l} \( 1+ \frac{ \sqrt{\lambda_{l-j}}}{2\lambda_{l-j+1}} + \frac{ \sqrt{\lambda_{l-j}}}{4\lambda_{l-j+2}\sqrt{\lambda_{l-j+1}}} + \frac{ \sqrt{\lambda_{l-j}}}{8\lambda_{l-j+3}\sqrt{\lambda_{l-j+2}}\sqrt{\lambda_{l-j+1}}} + \ldots \),
\end{align}
with the sequence $\lambda_i$ defined by $\lambda_0=0$ and $\lambda_{i+1}=i+1+\sqrt{\lambda_i}$ for $i \geq 0$.


\medskip
\paragraph{\underline{Second case}: $l=j$}
Thus, the $u$ is inserted in the $(j+1)$-th radicand.
In this case we get

\begin{align}
\label{equ:meanNjleaveslevelj}
\frac{[z^n] \left( \frac{\partial}{\partial u} \ _{k-l}\tilde{H}_{k}(z,u) \right) |_{u=1}}{[z^n]_{k-l}\tilde{H}_{k}(z,1)} = \frac{-4\rho_k^{3} (k-j) \Gamma(-1/4) \psi_{j} \prod_{m=j+2}^{k+1} \sqrt{a_m}}{\Gamma(\frac{1}{4}) b_{j+2}} \cdot \sqrt{n} \ \( 1+ \mathcal{O} \( n^{-\frac{1}{4}} \) \), 
\end{align}
with
\begin{align} 
\label{equ:psij}
\psi_{j}= \frac{1}{\sqrt{2\rho_k} \sqrt[4]{\tilde{\gamma}_j}} \prod_{i=j+2}^{k+1} \frac{1}{\sqrt{a_{i}}}. 
\end{align}

The constant $\hat{D}_{k,l}:=\frac{-4\rho_k^{3} (k-j) \Gamma(-1/4) \psi_{j} \prod_{m=j+2}^{k+1} \sqrt{a_m}}{\Gamma(\frac{1}{4}) b_{j+2}}$ simplifies to
\begin{align*}
\hat{D}_{k,l}=\frac{- \Gamma(-1/4) (k-j) \sqrt{\rho_k}}{\Gamma(1/4) \sqrt{\tilde{\gamma}_j}}.
\end{align*}

In order to get some information on the magnitude of this factor we would have to investigate $\tilde{\gamma}_j=-\frac{\partial}{\partial z}\tilde{R}_{j,k}(\rho_k,1)$, which seems to get rather involved. However, taking a look at Equation (\ref{equ:meanNjleaveslevelj}) we can see that there are already considerably more unary nodes in the $(k-j)$-th De Bruijn level, namely $\Theta(\sqrt{n})$.

\medskip
\paragraph{\underline{Third case}: $l \leq j$}

The third case gives for $n \longrightarrow \infty$

\[ \frac{[z^n] \left( \frac{\partial}{\partial u} \ _{k-l}\tilde{H}_{k}(z,u) \right) |_{u=1}}{[z^n]_{k-l}\tilde{H}_{k}(z,1)} = \frac{-4\rho_k^{j-l+3}(k-l) \Gamma(-1/4) \chi_{j} \prod_{m=j+2}^{k+1} \sqrt{a_m}}{\Gamma(3/4) b_{j+2}} n \  \(1+ \mathcal{O} \( n^{-\frac{1}{4}} \) \), \]
with
\[ \chi_{j}= \frac{1}{\sqrt{\tilde{\gamma}_j}} \psi_j \( \prod_{i=l+1}^{j-1}  \frac{1}{\sqrt{\tilde{R}_{i,k}(\rho_k,1)}} \),\]

where $\psi_j$ is defined as in (\ref{equ:psij}).

Thus, we proved that asymptotically there is an average of $\Theta(n)$ leaves in the upper $j$ De Bruijn levels. The constant $\tilde{D}_{k,l}:=\frac{-4\rho_k^{j-l+3}(k-l) \Gamma(-1/4) \chi_{j} \prod_{m=j+2}^{k+1} \sqrt{a_m}}{\Gamma(3/4) b_{j+2}}$ can be rewritten as
\begin{align*}
\tilde{D}_{k,l}=\frac{-\Gamma(-1/4) (k-l) \rho_k^{j-l}}{\Gamma(3/4)\tilde{\gamma}_j} \prod_{i=l+1}^{j-1} \frac{1}{\sqrt{\tilde{R}_{i,k}(\rho_k,1)}}.
\end{align*}
The following proposition sums up all the results that we obtained within this section.

\begin{prop}
\label{prop:levelsmean}
Let $X_n$ denote the number of leaves in the $(k-l)$-th De Bruijn level in a random lambda-term of size $n$ with at most $k$ De Bruijn levels. 

If $k \in (N_j,N_{j+1})$, then we get for the asymptotic mean when $n \longrightarrow \infty$
\begin{itemize}
	\item in the case $l > j$: 
	\[ \mathbb{E}X_n=\frac{[z^n] \left( \frac{\partial}{\partial u} \ _{k-l}H_{k}(z,u) \right) |_{u=1}}{[z^n]_{k-l}H_{k}(z,1)} = C_{k,l} \( 1+ \mathcal{O} \( \frac{1}{n} \) \), \]

	\item and in the case $l \leq j$:
	\[ \mathbb{E}X_n=\frac{[z^n] \left( \frac{\partial}{\partial u} \ _{k-l}H_{k}(z,u) \right) |_{u=1}}{[z^n]_{k-l}H_{k}(z,1)} = \tilde{C}_{k,l} \cdot n \( 1+ \mathcal{O} \( \frac{1}{n} \) \), \] \end{itemize}
	with constants $C_{k,l}$ and $\tilde{C}_{k,l}$ depending on $l$ and $k$.
	
If $k =N_j$, then the asymptotic mean for $n \longrightarrow \infty$ reads as
\begin{itemize}
	\item in the case $l > j$: 
	\[ \mathbb{E}X_n=\frac{[z^n] \left( \frac{\partial}{\partial u} \ _{k-l}H_{k}(z,u) \right) |_{u=1}}{[z^n]_{k-l}H_{k}(z,1)} = D_{k,l} \( 1+ \mathcal{O} \( n^{-\frac{1}{4}} \) \), \]
	\item in the case $l = j$: 
	\[ \mathbb{E}X_n=\frac{[z^n] \left( \frac{\partial}{\partial u} \ _{k-l}H_{k}(z,u) \right) |_{u=1}}{[z^n]_{k-l}H_{k}(z,1)} = \hat{D}_{k,l} \cdot \sqrt{n} \( 1+ \mathcal{O} \( n^{-\frac{1}{4}} \) \), \]
	
	\item and in the case $l < j$:
	\[ \mathbb{E}X_n = \frac{[z^n] \left( \frac{\partial}{\partial u} \ _{k-l}H_{k}(z,u) \right) |_{u=1}}{[z^n]_{k-l}H_{k}(z,1)} = \tilde{D}_{k,l} \cdot n 
\( 1+ \mathcal{O} \( n^{-\frac{1}{4}} \) \), \] 

\end{itemize}
	with constants $D_{k,l}$, $\hat{D}_{k,l}$ and $\tilde{D}_{k,l}$ depending on $l$ and $k$.
\end{prop}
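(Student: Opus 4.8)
The plan is to read off each case directly from the exact formula for the numerator,
\[ \left( \frac{\partial}{\partial u}\, {}_{k-l}\tilde{H}_{k}(z,u) \right)\bigg|_{u=1} = z^{k-l+1}(k-l)\prod_{i=l+1}^{k+1}\frac{1}{\sqrt{\tilde{R}_{i,k}(z,1)}}, \]
established in (\ref{equ:ablkminuslhleqk}), and to divide its $n$-th coefficient by $[z^n]\,{}_{k-l}\tilde{H}_{k}(z,1)=[z^n]H_k(z,1)$, whose asymptotics are supplied by Theorem \ref{theo:asymphleqkueg1}. Since the variable $u$ enters only the $(l+1)$-th radicand, at $u=1$ the dominant singularity of the numerator coincides with that of the denominator, namely $\rho_k$; hence the mean is governed entirely by the \emph{type} of the singularity that the product above exhibits at $\rho_k$, relative to the type $n^{-3/2}\rho_k^{-n}$ (for $N_j<k<N_{j+1}$) or $n^{-5/4}\rho_k^{-n}$ (for $k=N_j$) of the denominator.

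The key step is to insert into the product the singular expansions of the radicands $\tilde{R}_{i,k}(\rho_k-\epsilon,1)$ derived earlier in this section, and to identify which factor carries the dominant singular power of $\epsilon$. A factor $\tilde{R}_{i,k}^{-1/2}$ contributes only a constant (up to lower-order corrections) when $\tilde{R}_{i,k}(\rho_k,1)\neq 0$, but a genuine singular power when the radicand vanishes at $\rho_k$: in the regime $N_j<k<N_{j+1}$ only $\tilde{R}_{j+1,k}$ vanishes, giving $\tilde{R}_{j+1,k}^{-1/2}\sim \tilde{\gamma}_{j+1}^{-1/2}\epsilon^{-1/2}$, whereas for $k=N_j$ both $\tilde{R}_{j,k}$ and $\tilde{R}_{j+1,k}$ degenerate, contributing $\epsilon^{-1/2}$ and $\epsilon^{-1/4}$ respectively. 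Whether these critical indices fall inside the product range $l+1\le i\le k+1$ is exactly what the position of $l$ relative to $j$ controls.

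I would then carry out the casework mechanically. For $N_j<k<N_{j+1}$: if $l>j$ the product starts beyond the critical index, so it is a constant plus an $\mathcal{O}(\sqrt{\epsilon})$ term, the transfer theorem gives $n^{-3/2}\rho_k^{-n}$, and the ratio is $O(1)$; if $l\le j$ the product contains the factor $\epsilon^{-1/2}$, so the numerator grows like $n^{-1/2}\rho_k^{-n}$ and the ratio is $\Theta(n)$. For $k=N_j$: if $l>j$ the product is again regular (now with $\mathcal{O}(\epsilon^{1/4})$ corrections), giving $O(1)$; if $l=j$ the product picks up only the $\epsilon^{-1/4}$ factor, so the numerator grows like $n^{-3/4}\rho_k^{-n}$ and the ratio is $\Theta(\sqrt{n})$; if $l<j$ it picks up both the $\epsilon^{-1/2}$ and the $\epsilon^{-1/4}$ factors, i.e.\ $\epsilon^{-3/4}$, giving $n^{-1/4}\rho_k^{-n}$ and hence $\Theta(n)$. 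In each subcase the leading constant is obtained by collecting the constant prefactors of the surviving singular term (the quantities $\tilde{\delta}_l$, $\tilde{\phi}_{j+1,l}$, $\tilde{\delta}_j$, $\psi_j$, $\chi_j$ introduced above), applying singularity analysis, and dividing by $h_k$; this yields the constants $C_{k,l},\tilde{C}_{k,l},D_{k,l},\hat{D}_{k,l},\tilde{D}_{k,l}$ together with the claimed relative errors $O(1/n)$ (resp.\ $O(n^{-1/4})$).

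The only real difficulty is bookkeeping rather than conceptual: one must match the fractional powers of $\epsilon$ contributed by the reciprocal square roots against the summation range of the product, and keep the error terms uniform in a neighbourhood of $u=1$ so that the Flajolet--Odlyzko transfer theorem applies and the division by the denominator asymptotics of Theorem \ref{theo:asymphleqkueg1} is legitimate. Once the singular type in each case is correctly pinned down, the orders $O(1)$, $\Theta(\sqrt{n})$ and $\Theta(n)$, and the explicit constants, follow immediately.
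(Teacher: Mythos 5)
Your proposal is correct and follows essentially the same route as the paper: differentiate the generating function to obtain the product formula (\ref{equ:ablkminuslhleqk}), insert the singular expansions of the radicands at $\rho_k$, track which of the critical indices $j$ (and $j+1$) fall inside the product range $l+1\le i\le k+1$ to determine the resulting power of $\epsilon$, and divide the transferred coefficient asymptotics by those of Theorem \ref{theo:asymphleqkueg1}. All the singular exponents in your casework ($\epsilon^{-1/2}$, $\epsilon^{-1/4}$, $\epsilon^{-3/4}$ and the resulting orders $O(1)$, $\Theta(\sqrt{n})$, $\Theta(n)$) agree with the paper's computation.
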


All the constants occurring in Proposition \ref{prop:levelsmean} have been calculated explicitly and can be obtained for every fixed $k$.
In particular, we investigated $D_{k,l}$ in order to show that for large $k$ the number of leaves in the De Bruijn levels that are closer to the root is smaller (\textit{cf}. Figure \ref{fig:summarykap5}).

\begin{prop}
Let us consider a random closed lambda-term of size $n$ with at most $k$ De Bruijn levels and let us consider the case $k=N_j$. Then the average number of leaves in De Bruijn level $L$, with $0 \leq L \leq k-j-1$, is asymptotically equal to a constant $C_L$, which behaves like
\begin{align*}
C_L \sim \frac{L}{2(k-j-1-L)} \ \ \ \ \text{as} \ k \longrightarrow \infty.
\end{align*}
\end{prop}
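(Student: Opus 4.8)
The plan is to start from the explicit formula \eqref{equ:Dlk} for the constant $D_{k,l}$, which already records the exact asymptotic mean of the number of leaves in the case $k=N_j$ with $l>j$. First I would translate the level index: the De Bruijn level $L$ with $0\le L\le k-j-1$ corresponds to $l=k-L$, and the constraint $L\le k-j-1$ is exactly $l\ge j+1$, so this level falls into the first case ($l>j$) treated above and the average number of leaves equals $C_L=D_{k,k-L}$. Since $k-l=L$ is held fixed while $k\to\infty$, the index $l=k-L$ tends to infinity together with $k$, which is what makes the asymptotic regime tractable.

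The core of the argument is an asymptotic analysis of the auxiliary sequence $(\lambda_i)$ defined by $\lambda_0=0$ and $\lambda_{i+1}=i+1+\sqrt{\lambda_i}$. A routine bootstrapping induction on this recurrence gives $\lambda_i=i+O(\sqrt{i})$, hence $\lambda_i\sim i$ as $i\to\infty$. Substituting $l=k-L$ yields $\lambda_l\sim k-L$; and since $j\asymp\log\log k$ (as recorded in the excerpt) we have $j+1=o(k-L)$, so that
\[ \frac{\lambda_l}{k-j-1-L}=\frac{1+O(1/\sqrt{l})}{1-(j+1)/l}\longrightarrow 1. \]
Thus $\lambda_l\sim k-j-1-L$, and the prefactor $\frac{k-l}{2\lambda_l}=\frac{L}{2\lambda_{k-L}}$ already produces the claimed denominator.

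It remains to control the bracketed correction in \eqref{equ:Dlk}, namely the sum
\[ S=\frac{\sqrt{\lambda_{l-j}}}{2\lambda_{l-j+1}}+\frac{\sqrt{\lambda_{l-j}}}{4\lambda_{l-j+2}\sqrt{\lambda_{l-j+1}}}+\cdots, \]
whose $m$-th summand is $\frac{\sqrt{\lambda_{l-j}}}{2^m\lambda_{l-j+m}\prod_{r=1}^{m-1}\sqrt{\lambda_{l-j+r}}}$ and which consists of only a fixed number of terms (one fewer than the $L+1$ indices in the range $l+1\le i\le k+1$ appearing in $\tilde{\delta}_j$). Using $\lambda_{l-j+r}\sim l$ for each fixed $r$ (valid since $j=o(l)$), the $m$-th summand is asymptotic to $(2\sqrt{l})^{-m}$, so each of the finitely many terms tends to $0$ and hence $S\to 0$ as $k\to\infty$. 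Combining this with the prefactor estimate gives
\[ C_L=\frac{L}{2\lambda_{k-L}}\,(1+S)\sim\frac{L}{2(k-j-1-L)}, \]
as asserted.

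I expect the main obstacle to be the honest justification that $\lambda_l$ may be replaced by $k-j-1-L$ in the denominator: this is the step where the asymptotics $\lambda_i\sim i$ must be combined with the merely qualitative growth rate $j\asymp\log\log k$ to guarantee that both the $j$-dependent shift and the $\Theta(\sqrt{l})$-size correction in $\lambda_l$ are negligible relative to $l$. Everything else, namely the finiteness of the number of summands in $S$ and the geometric bound $(2\sqrt{l})^{-m}$ on each of them, is a short and uniform estimate.
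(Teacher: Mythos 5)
Your proposal is correct and follows essentially the same route as the paper: the paper's proof likewise sets $L=k-l$, identifies $C_L$ with the constant $D_{k,l}$ of the first case ($l>j$), and derives the asymptotics from $\lambda_i\sim i$ obtained by bootstrapping. Your write-up merely fills in the details (the identification of the regime $l>j$, the estimate $\lambda_l\sim k-j-1-L$ using $j\asymp\log\log k$, and the vanishing of the correction sum) that the paper's very terse proof leaves implicit.
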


\begin{proof}
By setting $L:=k-l$ the constant $C_L$ corresponds exactly to the constant the $D_{k,l}$ of (\ref{equ:Dlk}). Thus, the proposition follows directly by investigating this constant $D_{k,l}$. The asymptotics for the sequence $\lambda_i$ ($\lambda_i \sim i, \text{as} \ i \longrightarrow \infty)$ can be obtained by bootstrapping.
\end{proof}

\begin{rem}
Using some estimates for the $a_i's$ we can prove that the same behaviour is true for the constants $C_{k,l}$. Thus, in both cases, whether $k$ is an element of $(N_i)_{i >0}$ or not, a random closed lambda-term with at most $k$ De Bruijn levels has almost no leaves in its lowest levels if $k$ is large.
\end{rem}

\subsubsection{Distributions}

Now that we derived the mean values for the number of leaves in the different De Bruijn levels, we are interested in their distribution. Therefore we distinguish again between the cases of $k$ being an element of the sequence $(N_i)_{i \geq 0}$ or not.

\medskip
\paragraph{\textbf{The case: $N_j < k < N_{j+1}$}}

We know that the generating function $_{k-l}\tilde{H}_k(z,u)$ consists of $k+1$ nested radicals, where a $u$ is inserted in the $(l+1)$-th radicand counted from the innermost one. Additionally we know that for $N_j < k < N_{j+1}$ the dominant singularity $\tilde{\rho}_k(u)$ comes from the  $(j+1)$-th radicand. Therefore, for $l>j$ the function $\tilde{\rho}_k(u)$ is independent of $u$, which is the reason why we do not get a quasi-power in that case. Thus, for the first $k-j$ levels of the lambda-DAG (\textit{i.e.} the case $l>j$), where there are just a few leaves, we can not say something about the distribution of the leaves so far. It might be a degenerated distribution.

However, in case that $l \leq j$ (\textit{i.e.}, for the upper levels where there are a lot of leaves) we will use the Quasi-Power theorem to show that the number of leaves in the $(k-j)$-th until the $k$-th level is asymptotically normally distributed.

Analogously as we did in Section \ref{subsec:total_height_njnj+1} we can show that
\begin{align}
\label{equ:kminusjhleqkzufracz1}
\frac{[z^n] \ _{k-l}\tilde{H}_{k}(z,u)}{[z^n] \ _{k-l}\tilde{H}_{k}(z,1)} = \frac{\tilde{h}_k(u)}{h_k} \( \frac{\rho_{k}}{\tilde{\rho}_{k}(u)} \)^n \( 1+ \mathcal{O} \( \frac{1}{n} \) \). 
\end{align}

We can easily see that Equation (\ref{equ:kminusjhleqkzufracz1}) has the desired shape for the Quasi-Power Theorem. Hence, assuming that $\tilde{B}''(1)+\tilde{B}'(1)-\tilde{B}'(1)^2 \neq 0$, where $\tilde{B}(u)=\frac{\rho_{k}}{\tilde{\rho}_{k}(u)}$, the Quasi-Power Theorem can be applied, which proves that the number of leaves in a De Bruijn level that is above the $(k-j-1)$-th level is asymptotically normally distributed.

\medskip
\paragraph{\textbf{The case: $k = N_{j}$}}

As is the previous case we do not know the distribution of the number of leaves in the lowest $k-j$ De Bruijn levels (\textit{i.e.}, the levels $0$ to $k-j-1$), due to the fact that for these levels the function $\tilde{\rho}_k(u)$ does not depend on $u$. It might also be a degenerated distribution.

In Section \ref{subsection:k=njtotal} we showed that the dominant singularity comes from the $j$-th radicand when $u$ is in a neighbourhood of 1. Thus, for the case that $l=j$, where we insert a $u$ in the $j+1$-th radicand, the dominant singularity $\rho_k(u)$ does still do not depend on $u$. Therefore we also do not know the distribution of the leaves in the $(k-j)$-th De Bruijn level. It seems very unlikely that the number of leaves in this level will be asymptotically normally distributed, but further studies on this subject might be very interesting.

Now we are going to show that the number of leaves in the upper $j$ De Bruijn levels (\textit{i.e.}, from the $(k-j+1)$-th to the $k$-th level) is asymptotically normally distributed. In order to do so we proceed analogously as in Section \ref{subsection:k=njtotal} for the total number of leaves.
Therefore for $l<j$ we set again $z=\tilde{\rho}_k(u)(1+\frac{t}{n})$ and $u=1+\frac{s}{n}$ and obtain expansions that behave just as the ones in Lemma \ref{lem:exprjrj+1rho1}. The only differences that occur concern the constants and therefore do not alter our results for the normal distribution. 

Thus, Theorem \ref{theo:levelsmeandist} is proved. Figure \ref{fig:summarykap5} summarizes the results that we obtained in Section \ref{sec:leaveslevels} and illustrates a combinatorial interpretation of the occurring phenomena.

\begin{figure}[h]
  \centering
\scalebox{0.8}{\begin{tikzpicture}
    \draw (2,4) -- (2,3);
    \draw (1.5,2.5) to [curve through = {(1.6,2.55) (1.7,2.6)}] (2,3);
    \draw (2.5,2.5) to [curve through = {(2.4,2.55) (2.3,2.6)}] (2,3);
    \draw (0,0) -- (1.25,2.5);
    \draw (4,0) -- (2.75,2.5);
	\draw (0,0) -- (4,0);
	\draw (1.25,2.5) -- (2.75,2.5);
	\draw (0.75,1.5) -- (3.25,1.5);
	
	\draw[dotted] (0.8,1.5) -- (0.8,1.6);
	\draw[dotted] (0.9,1.5) -- (0.9,1.8);
	\draw[dotted] (1,1.5) -- (1,1.9);
	\draw[dotted] (1.1,1.5) -- (1.1,2.2);
	\draw[dotted] (1.2,1.5) -- (1.2,2.4);
	\draw[dotted] (1.3,1.5) -- (1.3,1.8);
	\draw[dotted] (1.3,2.2) -- (1.3,2.5);
	\draw[dotted] (1.4,1.5) -- (1.4,1.8);
	\draw[dotted] (1.4,2.2) -- (1.4,2.5);
	\draw[dotted] (1.5,1.5) -- (1.5,1.8);
	\draw[dotted] (1.5,2.2) -- (1.5,2.5);
	\draw[dotted] (1.6,1.5) -- (1.6,1.8);
	\draw[dotted] (1.6,2.2) -- (1.6,2.5);
	\draw[dotted] (1.7,1.5) -- (1.7,1.8);
	\draw[dotted] (1.7,2.2) -- (1.7,2.5);
	\draw[dotted] (1.8,1.5) -- (1.8,1.8);
	\draw[dotted] (1.8,2.2) -- (1.8,2.5);
	\draw[dotted] (1.9,1.5) -- (1.9,1.8);
	\draw[dotted] (1.9,2.2) -- (1.9,2.5);
	\draw[dotted] (2,1.5) -- (2,1.8);
	\draw[dotted] (2,2.2) -- (2,2.5);
	\draw[dotted] (2.1,1.5) -- (2.1,1.8);
	\draw[dotted] (2.1,2.2) -- (2.1,2.5);
	\draw[dotted] (2.2,1.5) -- (2.2,1.8);
	\draw[dotted] (2.2,2.2) -- (2.2,2.5);
	\draw[dotted] (2.3,1.5) -- (2.3,1.8);
	\draw[dotted] (2.3,2.2) -- (2.3,2.5);
	\draw[dotted] (2.4,1.5) -- (2.4,1.8);
	\draw[dotted] (2.4,2.2) -- (2.4,2.5);
	\draw[dotted] (2.5,1.5) -- (2.5,1.8);
	\draw[dotted] (2.5,2.2) -- (2.5,2.5);
	\draw[dotted] (2.6,1.5) -- (2.6,1.8);
	\draw[dotted] (2.6,2.2) -- (2.6,2.5);
	\draw[dotted] (2.7,1.5) -- (2.7,1.8);
	\draw[dotted] (2.7,2.2) -- (2.7,2.5);
	\draw[dotted] (2.8,1.5) -- (2.8,2.4);
	\draw[dotted] (2.9,1.5) -- (2.9,2.2);
	\draw[dotted] (3,1.5) -- (3,1.9);
	\draw[dotted] (3.1,1.5) -- (3.1,1.8);
	\draw[dotted] (3.2,1.5) -- (3.2,1.6);
	
	\draw (0.1,0) -- (0.1,0.2);
	\draw (0.2,0) -- (0.2,0.4);
	\draw (0.3,0) -- (0.3,0.6);
	\draw (0.4,0) -- (0.4,0.8);
	\draw (0.5,0) -- (0.5,1);
	\draw (0.6,0) -- (0.6,1.2);
	\draw (0.7,0) -- (0.7,1.4);
	\draw (0.8,0) -- (0.8,1.5);
	\draw (0.9,0) -- (0.9,1.5);
	\draw (1,0) -- (1,1.5);
	\draw (1.1,0) -- (1.1,1.5);
	\draw (1.2,0) -- (1.2,1.5);
	\draw (1.3,0) -- (1.3,1.5);
	\draw (1.4,0) -- (1.4,1.5);
	\draw (1.5,0) -- (1.5,1.5);
	\draw (1.6,0) -- (1.6,1.5);
	\draw (1.7,0) -- (1.7,1.5);
	\draw (1.8,0) -- (1.8,1.5);
	\draw (1.9,0) -- (1.9,1.5);
	\draw (2,0) -- (2,1.5);
	\draw (2.1,0) -- (2.1,1.5);
	\draw (2.2,0) -- (2.2,1.5);
	\draw (2.3,0) -- (2.3,1.5);
	\draw (2.4,0) -- (2.4,1.5);
	\draw (2.5,0) -- (2.5,1.5);
	\draw (2.6,0) -- (2.6,1.5);
	\draw (2.7,0) -- (2.7,1.5);
	\draw (2.8,0) -- (2.8,1.5);
	\draw (2.9,0) -- (2.9,1.5);
	\draw (3,0) -- (3,1.5);
	\draw (3.1,0) -- (3.1,1.5);
	\draw (3.2,0) -- (3.2,1.5);
	\draw (3.3,0) -- (3.3,1.4);
	\draw (3.4,0) -- (3.4,1.2);
	\draw (3.5,0) -- (3.5,1);
	\draw (3.6,0) -- (3.6,0.8);
	\draw (3.7,0) -- (3.7,0.6);
	\draw (3.8,0) -- (3.8,0.4);
	\draw (3.9,0) -- (3.9,0.2);
    	\coordinate[label=+90:$k-j$] (z) at (2,1.7);
    	\node at (0.5,3) {$k=N_j$};
    	\end{tikzpicture}}
    	\scalebox{0.8}{\begin{tikzpicture} 
	\draw (2,4) -- (2,3);
    \draw (1.5,2.5) to [curve through = {(1.6,2.55) (1.7,2.6)}] (2,3);
    \draw (2.5,2.5) to [curve through = {(2.4,2.55) (2.3,2.6)}] (2,3);
    \draw (0,0) -- (1.25,2.5);
    \draw (4,0) -- (2.75,2.5);
	\draw (0,0) -- (4,0);
	\draw (1.25,2.5) -- (2.75,2.5);
	\draw (0.75,1.5) -- (3.25,1.5);
	\draw (1.25,2.5) -- (2.75,2.5);
	\draw (0.75,1.5) -- (3.25,1.5);
	
	\draw (0.8,1.5) -- (0.8,1.6);
	\draw (0.9,1.5) -- (0.9,1.8);
	\draw (1,1.5) -- (1,1.9);
	\draw (1.1,1.5) -- (1.1,2.2);
	\draw (1.2,1.5) -- (1.2,2.4);
	\draw (1.3,1.5) -- (1.3,1.8);
	\draw (1.3,2.2) -- (1.3,2.5);
	\draw (1.4,1.5) -- (1.4,1.8);
	\draw (1.4,2.2) -- (1.4,2.5);
	\draw (1.5,1.5) -- (1.5,1.8);
	\draw (1.5,2.2) -- (1.5,2.5);
	\draw (1.6,1.5) -- (1.6,1.8);
	\draw (1.6,2.2) -- (1.6,2.5);
	\draw (1.7,1.5) -- (1.7,1.8);
	\draw (1.7,2.2) -- (1.7,2.5);
	\draw (1.8,1.5) -- (1.8,1.8);
	\draw (1.8,2.2) -- (1.8,2.5);
	\draw (1.9,1.5) -- (1.9,1.8);
	\draw (1.9,2.2) -- (1.9,2.5);
	\draw (2,1.5) -- (2,1.8);
	\draw (2,2.2) -- (2,2.5);
	\draw (2.1,1.5) -- (2.1,1.8);
	\draw (2.1,2.2) -- (2.1,2.5);
	\draw (2.2,1.5) -- (2.2,1.8);
	\draw (2.2,2.2) -- (2.2,2.5);
	\draw (2.3,1.5) -- (2.3,1.8);
	\draw (2.3,2.2) -- (2.3,2.5);
	\draw (2.4,1.5) -- (2.4,1.8);
	\draw (2.4,2.2) -- (2.4,2.5);
	\draw (2.5,1.5) -- (2.5,1.8);
	\draw (2.5,2.2) -- (2.5,2.5);
	\draw (2.6,1.5) -- (2.6,1.8);
	\draw (2.6,2.2) -- (2.6,2.5);
	\draw (2.7,1.5) -- (2.7,1.8);
	\draw (2.7,2.2) -- (2.7,2.5);
	\draw (2.8,1.5) -- (2.8,2.4);
	\draw (2.9,1.5) -- (2.9,2.2);
	\draw (3,1.5) -- (3,1.9);
	\draw (3.1,1.5) -- (3.1,1.8);
	\draw (3.2,1.5) -- (3.2,1.6);

	\draw (0.1,0) -- (0.1,0.2);
	\draw (0.2,0) -- (0.2,0.4);
	\draw (0.3,0) -- (0.3,0.6);
	\draw (0.4,0) -- (0.4,0.8);
	\draw (0.5,0) -- (0.5,1);
	\draw (0.6,0) -- (0.6,1.2);
	\draw (0.7,0) -- (0.7,1.4);
	\draw (0.8,0) -- (0.8,1.5);
	\draw (0.9,0) -- (0.9,1.5);
	\draw (1,0) -- (1,1.5);
	\draw (1.1,0) -- (1.1,1.5);
	\draw (1.2,0) -- (1.2,1.5);
	\draw (1.3,0) -- (1.3,1.5);
	\draw (1.4,0) -- (1.4,1.5);
	\draw (1.5,0) -- (1.5,1.5);
	\draw (1.6,0) -- (1.6,1.5);
	\draw (1.7,0) -- (1.7,1.5);
	\draw (1.8,0) -- (1.8,1.5);
	\draw (1.9,0) -- (1.9,1.5);
	\draw (2,0) -- (2,1.5);
	\draw (2.1,0) -- (2.1,1.5);
	\draw (2.2,0) -- (2.2,1.5);
	\draw (2.3,0) -- (2.3,1.5);
	\draw (2.4,0) -- (2.4,1.5);
	\draw (2.5,0) -- (2.5,1.5);
	\draw (2.6,0) -- (2.6,1.5);
	\draw (2.7,0) -- (2.7,1.5);
	\draw (2.8,0) -- (2.8,1.5);
	\draw (2.9,0) -- (2.9,1.5);
	\draw (3,0) -- (3,1.5);
	\draw (3.1,0) -- (3.1,1.5);
	\draw (3.2,0) -- (3.2,1.5);
	\draw (3.3,0) -- (3.3,1.4);
	\draw (3.4,0) -- (3.4,1.2);
	\draw (3.5,0) -- (3.5,1);
	\draw (3.6,0) -- (3.6,0.8);
	\draw (3.7,0) -- (3.7,0.6);
	\draw (3.8,0) -- (3.8,0.4);
	\draw (3.9,0) -- (3.9,0.2);

    	\coordinate[label=+90:$k-j$] (z) at (2,1.7);

    	\node at (0.1,3) {$N_j<k<N_{j+1}$};
	\end{tikzpicture}}
\scalebox{0.8}{\begin{tikzpicture} 
	\draw (2,4) -- (2,3);
    \draw (1.5,2.5) to [curve through = {(1.6,2.55) (1.7,2.6)}] (2,3);
    \draw (2.5,2.5) to [curve through = {(2.4,2.55) (2.3,2.6)}] (2,3);
    \draw (0,0) -- (1.25,2.5);
    \draw (4,0) -- (2.75,2.5);
	\draw (0,0) -- (4,0);
	\draw (1.25,2.5) -- (2.75,2.5);
	\draw (0.75,1.5) -- (3.25,1.5);
	\draw (1.25,2.5) -- (2.75,2.5);
	\draw (0.75,1.5) -- (3.25,1.5);
	
	\draw[dotted] (0.8,1.5) -- (0.8,1.6);
	\draw[dotted] (0.9,1.5) -- (0.9,1.8);
	\draw[dotted] (1,1.5) -- (1,1.9);
	\draw[dotted] (1.1,1.5) -- (1.1,2.2);
	\draw[dotted] (1.2,1.5) -- (1.2,2.4);
	\draw[dotted] (1.3,1.5) -- (1.3,1.8);
	\draw[dotted] (1.3,2.2) -- (1.3,2.5);
	\draw[dotted] (1.4,1.5) -- (1.4,1.8);
	\draw[dotted] (1.4,2.2) -- (1.4,2.5);
	\draw[dotted] (1.5,1.5) -- (1.5,1.8);
	\draw[dotted] (1.5,2.2) -- (1.5,2.5);
	\draw[dotted] (1.6,1.5) -- (1.6,1.8);
	\draw[dotted] (1.6,2.2) -- (1.6,2.5);
	\draw[dotted] (1.7,1.5) -- (1.7,1.8);
	\draw[dotted] (1.7,2.2) -- (1.7,2.5);
	\draw[dotted] (1.8,1.5) -- (1.8,1.8);
	\draw[dotted] (1.8,2.2) -- (1.8,2.5);
	\draw[dotted] (1.9,1.5) -- (1.9,1.8);
	\draw[dotted] (1.9,2.2) -- (1.9,2.5);
	\draw[dotted] (2,1.5) -- (2,1.8);
	\draw[dotted] (2,2.2) -- (2,2.5);
	\draw[dotted] (2.1,1.5) -- (2.1,1.8);
	\draw[dotted] (2.1,2.2) -- (2.1,2.5);
	\draw[dotted] (2.2,1.5) -- (2.2,1.8);
	\draw[dotted] (2.2,2.2) -- (2.2,2.5);
	\draw[dotted] (2.3,1.5) -- (2.3,1.8);
	\draw[dotted] (2.3,2.2) -- (2.3,2.5);
	\draw[dotted] (2.4,1.5) -- (2.4,1.8);
	\draw[dotted] (2.4,2.2) -- (2.4,2.5);
	\draw[dotted] (2.5,1.5) -- (2.5,1.8);
	\draw[dotted] (2.5,2.2) -- (2.5,2.5);
	\draw[dotted] (2.6,1.5) -- (2.6,1.8);
	\draw[dotted] (2.6,2.2) -- (2.6,2.5);
	\draw[dotted] (2.7,1.5) -- (2.7,1.8);
	\draw[dotted] (2.7,2.2) -- (2.7,2.5);
	\draw[dotted] (2.8,1.5) -- (2.8,2.4);
	\draw[dotted] (2.9,1.5) -- (2.9,2.2);
	\draw[dotted] (3,1.5) -- (3,1.9);
	\draw[dotted] (3.1,1.5) -- (3.1,1.8);
	\draw[dotted] (3.2,1.5) -- (3.2,1.6);

	\draw (0.1,0) -- (0.1,0.2);
	\draw (0.2,0) -- (0.2,0.4);
	\draw (0.3,0) -- (0.3,0.6);
	\draw (0.4,0) -- (0.4,0.8);
	\draw (0.5,0) -- (0.5,1);
	\draw (0.6,0) -- (0.6,1.2);
	\draw (0.7,0) -- (0.7,1.4);
	\draw (0.8,0) -- (0.8,1.5);
	\draw (0.9,0) -- (0.9,1.5);
	\draw (1,0) -- (1,1.5);
	\draw (1.1,0) -- (1.1,1.5);
	\draw (1.2,0) -- (1.2,1.5);
	\draw (1.3,0) -- (1.3,1.5);
	\draw (1.4,0) -- (1.4,1.5);
	\draw (1.5,0) -- (1.5,1.5);
	\draw (1.6,0) -- (1.6,1.5);
	\draw (1.7,0) -- (1.7,1.5);
	\draw (1.8,0) -- (1.8,1.5);
	\draw (1.9,0) -- (1.9,1.5);
	\draw (2,0) -- (2,1.5);
	\draw (2.1,0) -- (2.1,1.5);
	\draw (2.2,0) -- (2.2,1.5);
	\draw (2.3,0) -- (2.3,1.5);
	\draw (2.4,0) -- (2.4,1.5);
	\draw (2.5,0) -- (2.5,1.5);
	\draw (2.6,0) -- (2.6,1.5);
	\draw (2.7,0) -- (2.7,1.5);
	\draw (2.8,0) -- (2.8,1.5);
	\draw (2.9,0) -- (2.9,1.5);
	\draw (3,0) -- (3,1.5);
	\draw (3.1,0) -- (3.1,1.5);
	\draw (3.2,0) -- (3.2,1.5);
	\draw (3.3,0) -- (3.3,1.4);
	\draw (3.4,0) -- (3.4,1.2);
	\draw (3.5,0) -- (3.5,1);
	\draw (3.6,0) -- (3.6,0.8);
	\draw (3.7,0) -- (3.7,0.6);
	\draw (3.8,0) -- (3.8,0.4);
	\draw (3.9,0) -- (3.9,0.2);

    	\coordinate[label=+90:$k-j-1$] (z) at (2,1.7);
	\node at (0.1,3) {$k=N_{j+1}$};
	\end{tikzpicture}}
	\caption{(1) In the $(k-j)$-th Be Bruijn level ($l=j$) are considerably more leaves than in the lower levels, but still less leaves then in the levels above. (2) With growing $k$ the $(k-j)$-th Be Bruijn level gets filled with leaves, while the number of leaves in the next level below (\textit{i.e.}, the $(k-j-1)$-th) slowly increases. (3) As soon as $k$ reaches the next element of the sequence $(N_j)_{j \geq 0}$, namely $k=N_{j+1}$ the $(k-j-1)$-th De Bruijn level immediately contains considerably more leaves than the levels below.}       
\label{fig:summarykap5}
\end{figure}
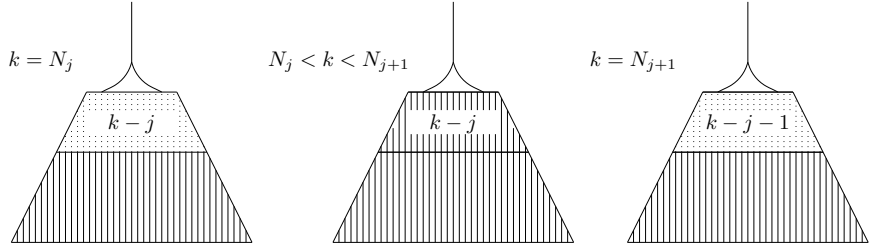

\subsection{Unary nodes}
\label{sec:unarylevels}

Now we want to investigate the number of unary nodes among the different De Bruijn levels. Let $C(z,u)$ again denote the generating function of the class of binary trees where $z$ marks the total number of nodes and $u$ marks the number of leaves, \textit{i.e.}, $C(z,u)=\frac{1-\sqrt{1-4z^2u}}{2z}$. The bivariate generating function $_{k-l}\bar{H}_k(z,w)$ of the class of closed lambda-terms with at most $k$ De Bruijn levels, where $z$ marks the size and $w$ the number of unary nodes in the $(k-l)$-th De Bruijn level, can then be expressed in terms of $C(z,u)$ by
\begin{align*}
C(z,C(z,1+\ldots + C(z,(k-l)+w\cdot C(z,\ldots (k-1)+C(z,k))\ldots)\ldots)).
\end{align*}
This can be rewritten to
\begin{align*}
_{k-l}\bar{H}_k(z,w) = \frac{1-\sqrt{\bar{R}_{k+1,k}(z,w)}}{2z},
\end{align*}
with
\begin{align*}
\bar{R}_{1,k}(z,w)&=1-4z^2k,\\
\bar{R}_{i,k}(z,w)&=1-4z^2(k-i+1)-2z+2z\sqrt{\bar{R}_{i-1,k}(z,w)} \ \ \ \text{for} \ 2 \leq i \leq k+1, \ \ i \neq l+1, \\
\bar{R}_{l+1,k}(z,w)&=1-4z^2(k-l)-2zw+2zw\sqrt{\bar{R}_{l,k}(z,w)}.
\end{align*}

Thus, for the derivatives we get
\begin{align*}
	\frac{\partial \bar{R}_{i,k}(z,w)}{\partial w} &= 0 \ \ \ \text{for} \ i < l+1, \\
	\frac{\partial \bar{R}_{l+1,k}(z,w)}{\partial w} &= -2z+2z\sqrt{\bar{R}_{l,k}(z,w)},\\
	\frac{\partial \bar{R}_{i,k}(z,w)}{\partial w} &= \frac{z}{\sqrt{\bar{R}_{i-1,k}}} \frac{\partial \bar{R}_{i-1,k}(z,w)}{\partial w} \ \ \ \text{for} \ i > l+1,
	\end{align*}
which implies
\begin{align*}
\frac{\partial _{k-l}\bar{H}_k(z,w)}{\partial w} \Big|_{w=1} = \frac{z^{k-l}}{2} \prod_{i=l+1}^{k+1} \frac{1}{\sqrt{\bar{R}_{i,k}}} \(1-\sqrt{\bar{R}_{l,k}}\).
\end{align*}

As in the previous section we distinguish between different cases.

\pagebreak
\subsubsection{The case: $k=N_j$}

\medskip
\paragraph{\underline{First case}: $l > j+1$}

Inserting the expansion of the radicands $\bar{R}_{i,k}$ and simplifying yields for $n \longrightarrow \infty$

\begin{align*}
[z^n]_{k-l}\bar{H}_{k}(z,w) \Big|_{w=1} = \rho_k^{k-l}{2} \alpha_l \frac{n^{-5/4}}{\Gamma(-1/4)} \rho_k^{-n} \( 1+ \mathcal{O} \(n^{-\frac{1}{4}} \) \),
\end{align*}
with
\begin{align}
\label{equ_alphal}
\alpha_l= -\frac{b_l}{2\sqrt{a_l}} \prod_{i=l+1}^{k+1} \frac{1}{\sqrt{a_i}} - \sum_{m=l+1}^{k+1} \frac{b_m}{2\sqrt{a_m^3}} \prod_{\substack{i=l+1 \\ i \neq m}}^{k+1} \frac{1}{\sqrt{a_i}} \( 1-\sqrt{a_l} \),
\end{align}
where $a_i:=\tilde{a}_i=a_i(1)$ and $b_i:=\tilde{b}_i=b_i(1)$ are defined in the previous sections and result from the expansions of the radicands. 

Thus, in this case the expected value of the number of unary nodes in the $(k-l)$-th De Bruijn level reads as
\begin{align*}
\frac{[z^n] \frac{\partial}{\partial w} _{k-l}\bar{H}_{k}(z,w)}{[z^n] _{k-l}\bar{H}_{k}(z,1)} = \frac{-2 \alpha_l \rho_k^{j-l+2} \prod_{m=j+2}^{k+1} \sqrt{a_m}}{b_{j+2}} \( 1+ \mathcal{O} \( n^{-\frac{1}{4}} \) \), \ \ \ \ \text{as} \ n \longrightarrow \infty.
\end{align*}

Furthermore, the constant $\frac{-2 \alpha_l \rho_k^{j-l+2} \prod_{m=j+2}^{k+1} \sqrt{a_m}}{b_{j+2}}$ can be simplified to
\begin{align*}
1+ \( \frac{1}{4\rho_k \lambda_{l-j}} - \frac{\sqrt{\lambda_{l-j-1}}}{2\lambda_{l-j}} \) \( 1+ \frac{\lambda_{l-j}}{2\lambda_{l-j+1}\sqrt{\lambda_{l-j}}} + \frac{\lambda_{l-j}}{2^2\lambda_{l-j+2} \sqrt{\lambda_{l-j+1}}\sqrt{\lambda_{l-j}}}+\ldots \),
\end{align*}
with the sequence $\lambda_i$ defined by $\lambda_0=0$ and $\lambda_{i+1}=i+1+\sqrt{\lambda_i}$ for $i \geq 0$.

Since the second summand is almost zero for $l$ being close to $k$ and large $k$, this implies that the number of unary nodes in these levels (close to the root) is close to one for large $k$.

\medskip
\paragraph{\underline{Second case}: $l = j+1$}

For $n \longrightarrow \infty$ we get 
\begin{align*}
[z^n]_{k-l}\bar{H}_{k}(z,w) \Big|_{w=1} = \rho_k^{k-l}{2} \zeta_l \frac{n^{-5/4}}{\Gamma(-1/4)} \rho_k^{-n} \( 1+ \mathcal{O} \(n^{-\frac{1}{4}} \) \),
\end{align*}
with
\begin{align*}
\zeta_l= -\sqrt{2\rho_k} (\rho_k \tilde{\gamma})^{1/4} \prod_{i=j+2}^{k+1} \frac{1}{\sqrt{a_i}} - \sum_{m=j+2}^{k+1} \frac{b_m}{2\sqrt{a_m^3}} \prod_{\substack{i=j+2\\ i \neq m}}^{k+1} \frac{1}{\sqrt{a_i}}.
\end{align*}

Thus,
\begin{align*}
\frac{[z^n] \frac{\partial}{\partial w} _{k-l}\bar{H}_{k}(z,w)}{[z^n] _{k-l}\bar{H}_{k}(z,1)} = \frac{-2 \zeta_l \rho_k^{j-l+2} \prod_{m=j+2}^{k+1} \sqrt{a_m}}{b_{j+2}} \( 1+ \mathcal{O} \( n^{-\frac{1}{4}} \) \), \ \ \ \ \text{as} \ n \longrightarrow \infty.
\end{align*}

In this case the constant $\frac{-2 \zeta_l \rho_k^{j-l+2} \prod_{m=j+2}^{k+1} \sqrt{a_m}}{b_{j+2}}$ simplifies to
\begin{align*}
1 + \frac{1}{4\rho_k} \( 1+ \frac{1}{2\lambda_2} + \frac{1}{2^2\lambda_3 \sqrt{\lambda_2}} + \ldots \).
\end{align*}

So, the expected number of unary nodes in the $(k-j-1)$-th De Bruijn level behaves exactly like in the lower levels. 
Starting from the next level a change in the behaviour can be determined, as we will see in the following.

\medskip
\paragraph{\underline{Third case}: $l = j$}

For $n \longrightarrow \infty$ we have
\begin{align*}
[z^n]_{k-l}\bar{H}_{k}(z,w) \Big|_{w=1} = \rho_k^{k-l}{2} \beta_l \frac{n^{-3/4}}{\Gamma(1/4)} \rho_k^{-n} \( 1+ \mathcal{O} \(n^{-\frac{1}{2}} \) \),
\end{align*}
with
\begin{align*}
\beta_l= \frac{1}{\sqrt{2\rho_k} \sqrt[4]{\rho_k \tilde{\gamma}_j}} \prod_{i=j+2}^{k+1} \frac{1}{\sqrt{a_i}}.
\end{align*}

Thus,
\begin{align*}
\frac{[z^n] \frac{\partial}{\partial w} _{k-l}\bar{H}_{k}(z,w)}{[z^n] _{k-l}\bar{H}_{k}(z,1)} = \frac{-2 \beta_l \rho_k^{2} \Gamma(-1/4) \prod_{m=j+2}^{k+1} \sqrt{a_m}}{\Gamma(1/4) b_{j+2}} \cdot \sqrt{n} \( 1+ \mathcal{O} \( n^{-\frac{1}{2}} \) \), \ \ \ \ \text{as} \ n \longrightarrow \infty.
\end{align*}

The constant $\frac{-2 \beta_l \rho_k^{2} \Gamma(-1/4) \prod_{m=j+2}^{k+1} \sqrt{a_m}}{\Gamma(1/4) b_{j+2}}$ can be written as
\begin{align*}
\frac{- \Gamma(-1/4)}{2\Gamma(1/4) \sqrt{\rho_k \tilde{\gamma}_j}}.
\end{align*}

The expected number of unary nodes in this ``separating level'' is therefore asymptotically $\Theta(\sqrt{n})$ (as was the number of leaves).

\medskip
\paragraph{\underline{Fourth case}: $l < j$}

For $ n \longrightarrow \infty$ we get
\begin{align*}
[z^n]_{k-l}\bar{H}_{k}(z,w) \Big|_{w=1} = \rho_k^{k-l}{2} \epsilon_l \frac{n^{-1/4}}{\Gamma(3/4)} \rho_k^{-n} \( 1+ \mathcal{O} \(n^{-\frac{1}{4}} \) \),
\end{align*}
with
\begin{align*}
\epsilon_l= \frac{1}{\sqrt{2\rho_k} \sqrt[4]{\rho_k \tilde{\gamma}_j} \sqrt{\rho_k \tilde{\gamma}_j}} \prod_{i=j+2}^{k+1} \frac{1}{\sqrt{a_i}} \prod_{m=l+1}^{j-1} \frac{1}{\sqrt{\tilde{R}_{m,k}}} \( 1- \sqrt{\bar{R}_{l,k}} \).
\end{align*}

Thus,
\begin{align*}
\frac{[z^n] \frac{\partial}{\partial w} _{k-l}\bar{H}_{k}(z,w)}{[z^n] _{k-l}\bar{H}_{k}(z,1)} = \frac{-2 \epsilon_l \rho_k^{j-l+2} \Gamma(-1/4) \prod_{m=j+2}^{k+1} \sqrt{a_m}}{\Gamma(3/4) b_{j+2}} \cdot n \( 1+ \mathcal{O} \( n^{-\frac{1}{4}} \) \), \ \ \ \ \text{as} \ n \longrightarrow \infty.
\end{align*}

The constant $\frac{-2 \epsilon_l \rho_k^{j-l+2} \Gamma(-1/4) \prod_{m=j+2}^{k+1} \sqrt{a_m}}{\Gamma(3/4) b_{j+2}}$ can be simplified to
\begin{align*}
\frac{-\rho_k^{j-l+1} \Gamma(-1/4)}{2\Gamma(3/4) \tilde{\gamma}_j} \prod_{m=l+1}^{j-1} \frac{1}{\sqrt{\tilde{R}_{m,k}}} \( 1- \sqrt{\bar{R}_{l,k}} \).
\end{align*}

Hence, analogously to the number of leaves, we proved that the number of unary nodes on the upper $j+1$ De Bruijn levels is $\Theta(n)$.

\subsubsection{The case: $N_j<k<N_{j+1}$}

This case works analogously to the previous one. Thus, we just give the results for the expected values.

\medskip
\paragraph{\underline{First case}: $l > j+1$}

In this case, the expected value is entirely equal to the mean for the case $k=N_j$ and $l > j+1$. So, with $\alpha_l$ defined as in (\ref{equ_alphal}), we have for $ n \longrightarrow \infty$

\begin{align*}
\frac{[z^n] \frac{\partial}{\partial w} _{k-l}\bar{H}_{k}(z,w)}{[z^n] _{k-l}\bar{H}_{k}(z,1)} = \frac{-2 \alpha_l \rho_k^{j-l+2}  \prod_{m=j+2}^{k+1} \sqrt{a_m}}{b_{j+2}} \cdot n \( 1+ \mathcal{O} \( n^{-\frac{1}{2}} \) \).
\end{align*}

\paragraph{\underline{Second case}: $l = j+1$}

In the second case, the constant differs a little bit, but the result stays qualitatively unaltered. We get

\begin{align*}
\frac{[z^n] \frac{\partial}{\partial w} _{k-l}\bar{H}_{k}(z,w)}{[z^n] _{k-l}\bar{H}_{k}(z,1)} = \frac{-2 \mu_l \rho_k^{j-l+2}  \prod_{m=j+2}^{k+1} \sqrt{a_m}}{b_{j+2}} \cdot n \( 1+ \mathcal{O} \( n^{-\frac{1}{2}} \) \), \ \ \ \ \text{as} \ n \longrightarrow \infty,
\end{align*}
with
\begin{align*}
\mu_l = - \sum_{m=j+2}^{k+1} \frac{b_m}{2\sqrt{a_m^3}} \prod_{\substack{ i=j+2 \\ i \neq m}}^{k+1} \frac{1}{\sqrt{a_i}} + \sqrt{\rho_k \tilde{\gamma}_{j+1}} \prod_{i=j+2}^{k+1} \frac{1}{\sqrt{a_i}}.
\end{align*}

\paragraph{\underline{Third case}: $l < j+1$}

For $n \longrightarrow \infty$ we have
\begin{align*}
\frac{[z^n] \frac{\partial}{\partial w} _{k-l}\bar{H}_{k}(z,w)}{[z^n] _{k-l}\bar{H}_{k}(z,1)} = \frac{-2 \theta_l \Gamma(-1/2) \rho_k^{j-l+2}  \prod_{m=j+2}^{k+1} \sqrt{a_m} \prod_{s=l+1}^{j} \sqrt{\bar{R}_s}}{b_{j+2} \Gamma(1/2)} \cdot n \( 1+ \mathcal{O} \( \frac{1}{n} \) \),
\end{align*}
with
\begin{align*}
\theta_l = \prod_{i=j+2}^{k+1} \frac{1}{\sqrt{a_i}} \frac{1}{\sqrt{\rho_k \tilde{\gamma}_{j+1}}} \(1- \sqrt{\bar{R}_{j,k}}\).
\end{align*}

Thus, the expected number of unary nodes in the last $j+1$ De Bruijn levels is asymptotically $\Theta(n)$.

\subsection{Binary nodes}
\label{sec:binary}

In this section we want to calculate the mean values of the number of binary nodes in the different De Bruijn levels. We denote by $C(z,v,u)$ the generating function of the class of binary trees where $z$ marks the total number of nodes, $v$ marks the number of binary nodes, and $u$ marks the number of leaves. Thus, we have
\begin{align}
\label{equ:gfbinarythree}
C(z,v,u)= \frac{1-\sqrt{1-4z^2uv}}{2zv}.
\end{align}

Using this generating function, we can write the bivariate generating function $_{k-l}H_k(z,v)$ of the class of closed lambda-terms with $z$ marking the size, and $v$ marking the the number of binary nodes on the $(k-l)$-th De Bruijn level as
\begin{align}
\label{equ:gfrekbinary}
C(z,1,C(z,1,1+C(z,1,2+\ldots +C(z,v,(k-l)+\ldots + C(z,1,k))\ldots)\ldots))).
\end{align}

Plugging in Equation (\ref{equ:gfbinarythree}) into (\ref{equ:gfrekbinary}) gives
\begin{align*}
_{k-l}\breve{H}_k(z,v) = \frac{1-\sqrt{\breve{R}_{k+1,k}(z,v)}}{2z},
\end{align*}
with
\begin{align*}
\breve{R}_{1,k}(z,v)&=1-4z^2k,\\
\breve{R}_{i,k}(z,v)&=1-4z^2(k-i+1)-2z+2z\sqrt{R_{i-1,k}(z,v)}, \ \ \ \text{for} \ 2 \leq i \leq k+1, \ \ i \neq l+1, \ i \neq l+2,\\
\breve{R}_{l+1,k}(z,v)&=1-4z^2(k-l)v-2zv+2zv\sqrt{\breve{R}_{l,k}(z,v)},\\
\breve{R}_{l+2,k}(z,v)&=1-4z^2(k-l-1)-\frac{2z}{v}+\frac{2z}{v}\sqrt{\breve{R}_{l+1,k}(z,v)}.
\end{align*}

Thus, for the derivatives we get
\begin{align*}
	\frac{\partial \breve{R}_{i,k}(z,v)}{\partial v} &= 0 \ \ \ \text{for} \ i < l+1, \\
	\frac{\partial \breve{R}_{l+1,k}(z,v)}{\partial v} &= -4z^2(k-l)-2z+2z\sqrt{\breve{R}_{l,k}(z,v)},\\
	\frac{\partial \breve{R}_{l+2,k}(z,v)}{\partial v} &= \frac{2z}{v^2} - \frac{2z}{v^2}\sqrt{\breve{R}_{l+1,k}}+ \frac{z}{v} \frac{1}{\sqrt{\breve{R}_{l+1,k}}} \frac{\partial \breve{R}_{l+1,k}(z,v)}{\partial v}, \\
	\frac{\partial \breve{R}_{i,k}(z,v)}{\partial v} &= z \frac{1}{\sqrt{\breve{R}_{i-1,k}}} \frac{\partial \breve{R}_{i-1,k}(z,v)}{\partial v} \ \ \ \text{for} \ i > l+2.
	\end{align*}

Finally, we have
\begin{align}
\frac{\partial _{k-l}\breve{H}_k(z,v)}{\partial v} \Big|_{v=1}=& 
\prod_{i=l+2}^{k+1} \frac{1}{\sqrt{\breve{R}_{i,k}}} \( - \frac{z^{k-l-1}}{2} +
\frac{\sqrt{\breve{R}_{l+1,k}}z^{k-l-1}}{2}+\frac{z^{k-l}}{2\sqrt{\breve{R}_{l+1,k}}}\right.
\nonumber \\
&\qquad     
\left.-\frac{z^{k-l}\sqrt{\breve{R}_{l,k}}}{2\sqrt{\breve{R}_{l+1,k}}}+\frac{z^{k-l+1}(k-l)}{\sqrt{\breve{R}_{l+1,k}}} \).
\label{equ:dHdvbinary}
\end{align}

Analogously to the previous sections we have to distinguish between different cases.
For the case $k=N_j$ and $l > j+1$ we get for $n \longrightarrow \infty$

\begin{align*}
[z^n]_{k-l}\breve{H}_{k}(z,v)= \xi_l \frac{n^{-5/4}}{\Gamma(-1/4)} \rho_k^{-n} \( 1+ \mathcal{O} \(n^{-\frac{1}{4}} \) \),
\end{align*}
with
\begin{align*}
\xi_l= &- \sum_{m=l+2}^{k+1} \frac{b_m}{2\sqrt{a_m^3}} \prod_{\substack{i=l+2 \\ i \neq m}} \frac{1}{\sqrt{a_i}} \( \frac{\rho_k^{k-l-1}(\sqrt{a_{l+1}}-1)}{2} + \frac{\rho_k^{k-l+1}(k-l)}{\sqrt{a_{l+1}}} + \frac{\rho_k^{k-l}(1-\sqrt{a_l})}{2\sqrt{a_{l+1}}} \) \\
&+ \prod_{i=l+2}^{k+1} \frac{1}{\sqrt{a_i}} \( \frac{\rho_k^{k-l-1}b_{l+1}}{2\sqrt{a_{l+1}}} - \frac{\rho_k^{k-l+1}(k-l)b_{l+1}}{2\sqrt{a_{l+1}^3}} + \frac{b_{l+1}}{2\sqrt{a_{l+1}^3}}(\sqrt{a_l}-1) - \frac{\rho_k^{k-l}b_l}{4\sqrt{a_l}\sqrt{a_{l+1}}} \).
\end{align*}
Thus,
\begin{align*}
\frac{[z^n] \frac{\partial}{\partial v} _{k-l}\breve{H}_{k}(z,v)}{[z^n] _{k-l}\breve{H}_{k}(z,1)} = \frac{-4 \xi_l \prod_{m=j+2}^{k+1} \sqrt{a_m}}{\rho_k^{k-j} b_{j+2}} \( 1+ \mathcal{O} \( n^{-\frac{1}{4}} \) \), \ \ \ \ \text{as} \ n \longrightarrow \infty.
\end{align*}

We performed a thorough investigation of the constant $ \frac{-4 \xi_l \prod_{m=j+2}^{k+1} \sqrt{a_m}}{\rho_k^{k-j} b_{j+2}}$ and showed that it is almost zero, in case $l$ is close to $k+1$ and $k$ is large,, \textit{i.e.}, if we consider a very low De Bruijn level, that is close to the root. 

Due to Equation (\ref{equ:dHdvbinary}) calculations get rather involved. Since the methods that are used are the same as in the previous section, we will omit further calculations. However, the results resemble the ones that we got in Section \ref{sec:leaveslevels} for the number of leaves. The only difference appears in the constants, but qualitatively also these constants behave equally.

\bibliographystyle{plain}
\bibliography{references}

\end{document}